\theoremstyle{plain}
\newtheorem{theorem}{Theorem}[section]
 \newtheorem{corollary}[theorem]{Corollary}
 \newtheorem{lemma}[theorem]{Lemma}
 \newtheorem{proposition}[theorem]{Proposition}
 \theoremstyle{definition}
 \newtheorem{definition}[theorem]{Definition}
 \theoremstyle{remark}
 \newtheorem{remark}[theorem]{Remark}
 \numberwithin{equation}{section}
\def\eps{\varepsilon}
\def\R{{\mathbb R}}% real numbers
\def\N{{\mathbb N}}% nonnegative integers
\mathchardef\mhyphen="2D % math hypehn
\def \PullOne{\mathcal{I}_\Phi}
\def \PullTwo{\mathcal{U}_\Phi}
\def \bP {{\mathbb P}}
\def \bL {{\mathbb L}}
\def \Svd {{\Gamma_c\left(\mathcal{S}\left(\mathbb{G}M, |\Lambda|\mathcal{V}\right)\right)}}
\def\Tend#1#2{\mathop{\longrightarrow}\limits_{#1\rightarrow#2}}
\numberwithin{equation}{section}
\begin{document}
	\title[]{Some remarks on semi-classical analysis\\ on two-step Nilmanifolds}
	\author[C. Fermanian]{Clotilde~Fermanian~Kammerer}
	\address[C. Fermanian Kammerer]{
		Univ Paris Est Creteil, CNRS, LAMA, F-94010 Creteil, Univ Gustave Eiffel, LAMA, F-77447 Marne-la-VallÃ©e, France
	}
	\email{clotilde.fermanian@u-pec.fr}
	\author[V. Fischer]{V\'eronique Fischer}\address[V. Fischer]%
	{University of Bath, Department of Mathematical Sciences, Bath, BA2 7AY, UK} 
	\email{v.c.m.fischer@bath.ac.uk}
	\author[S. Flynn]{Steven Flynn}
	\address[S. Flynn]{University of Bath, Department of Mathematical Sciences, Bath, BA2 7AY, UK} 
	\email{spf34@bath.ac.uk}
	
	\begin{abstract} 	
		In this paper, we present recent results about the developement of a 
semiclassical approach in the setting of  nilpotent Lie groups and nilmanifolds.
 We focus on two-step  nilmanifolds and exhibit some properties of the  weak limits of sequence of densities associated with eigenfunctions of a sub-Laplacian. We emphasize  the influence of the geometry on these properties. 
	\end{abstract}

\maketitle

\makeatletter
\renewcommand\l@subsection{\@tocline{2}{0pt}{3pc}{5pc}{}}
\makeatother

%\tableofcontents

\section{Introduction}

\subsection{Subelliptic operators and subelliptic estimates}

Sub-elliptic operators are an important class of operators containing  sub-Laplacians - also known as H\"ormander's sums of squares of vector fields~\cite{Ho} that generate the tangent space by iterated commutation. These operators also appear naturally in stochastic analysis as the Kolmogorov equations of stochastic ordinary differential equations are described in terms of second order differential operators which are often sub-Laplacians. In complex geometry,  Kohn Laplacian {(acting on functions)} on Cauchy-Riemann manifolds 
also gives an example of sub-elliptic operators. 
More generally, sub-elliptic operators appear in contact geometry, thereby having significant place.
\smallskip

One of their specific properties relies on the sub-elliptic estimates proved independently by Rothschild and Stein~\cite{RS} on the one hand, and Fefferman and Phong~\cite{FP}, on the other one. While, in the elliptic case, if $ \Delta u\in H^{s}(\R^d)$, then  $u\in H^{s+2}(\R^d)$, the gain of regularity is smaller for a sub-elliptic operator $\mathbb L= X_1^2 + \cdots + X_p^2$. Indeed, one then has 
 \[
 \mathbb L u\in H^{s}(\R^d)\;\Longrightarrow u\in H^{s+2/r}(\R^d)
 \]
 where $r$ is  the mean length to obtain spanning commutators. The Rothschild and Stein proof in ~\cite{RS} is based on Harmonic analysis on Lie groups, as developed in \cite{FS,RS}, via a lifting procedure consisting in the construction of a  nilpotent stratified Lie group for which the sub-elliptic operator is a sub-Laplacian.  It is in that spirit that we work here and we are interested in sublaplacians associated with a special type of manifolds called nilmanifolds,  that are naturally attached to a nilpotent  Lie group.

\subsection{Analysis on nilmanifolds}

In this paper, as is often the case in harmonic analysis, we restrict our attention to  nilpotent Lie groups that are stratified. We will further assume that their step is two later on. 

\subsubsection{Stratified Lie groups}
A stratified Lie group $G$ is a connected simply connected  Lie group whose (finite dimensional, real) Lie algebra $\mathfrak g$ admits an $\N$-stratification into linear subspaces, i.e.
$$
\mathfrak g = \mathfrak g_1 \oplus \mathfrak g_2 \oplus \ldots 
\quad\mbox{with} \quad [\mathfrak g_1,\mathfrak g_j]= \mathfrak g_{1+j}, \;\; 1\leq i\leq j.
$$
In this case, the group $G$ and its Lie algebra are nilpotent. Their step of nilpotency is the largest
number $s\in \N$ such that $\mathfrak g_s $ is not trivial.
In this paper, all the nilpotent Lie groups are assumed connected and simply connected.

\smallskip 

Once a basis $X_1,\ldots,X_{n}$
for~$\mathfrak g$ has been chosen, we may  identify 
the points $(x_{1},\ldots,x_n)\in \mathbb R^n$ 
 with the points  $x=\exp_G(x_{1}X_1+\cdots+x_n X_n)$ in~$G$ via the exponential mapping $\exp_G : \mathfrak g \to G$. 
 By choosing a basis adapted to the stratification, we derive 
  the product law   from the  Baker-Campbell-Hausdorff formula. We can also define 
the (topological vector) spaces $\mathcal C^\infty(G)$ and $\mathcal S(G)$  of smooth and  Schwartz functions on $G$ identified with $\R^n$.
This induces a Haar measure $dx$ on $G$ which is invariant under left and right translations and  defines Lebesgue spaces on~$G$, together with a (non-commutative) convolution for functions 
$f_1,f_2\in\mathcal S(G)$ or in~$L^2(G)$,
$$
 (f_1*f_2)(x):=\int_G f_1(y) f_2(y^{-1}x) dy,\;\; x\in G.
$$

The Lie algebra 
 $\mathfrak g$ is naturally  equipped with 
the family of dilations  $\{\delta_r, r>0\}$,  $\delta_r:\mathfrak g\to \mathfrak g$, defined by
$\delta_r X=r^\ell X$ for every $X\in \mathfrak g_\ell$, $\ell\in \N$
\cite{FS}.
 The associated group dilations derive from 
$$
\delta_r( \exp_G X)=\exp_G (\delta_r X), 
\quad r>0, \ X\in \mathfrak g.
$$
In a canonical way,  this leads to a notion of homogeneity for functions (measurable functions as well as distributions)  and    operators.
 For instance, the Haar measure is $Q$-homogeneous
where
$$
Q:=\sum_{\ell\in \mathbb N}\ell \dim \mathfrak g_\ell
$$
 is called the \emph{homogeneous dimension} of $G$.
Another example is obtained by 
identifying the elements of the Lie algebra $\mathfrak g$ with the left-invariant vector fields on $G$: we check readily that the elements of $\mathfrak g_j$ are homogeneous differential operators of degree $j$.

\smallskip 

When a scalar product is fixed on the first stratum $\mathfrak g_1$ of the Lie algebra $\mathfrak g$, the group $G$ is said to be Carnot. 
The intrinsic sub-Laplacian on $G$ is then the differential operator given by  
\[
\mathbb L_G := V_1^2 + \cdots + V_q^2,
\]
for any orthonormal basis   $V_1,\ldots, V_q$ of $\mathfrak g_1$.
{ We fix such a basis that will be used in different places of the paper.}

\subsubsection{Nilmanifolds}
A nilmanifold is the one-sided quotient of a nilpotent Lie group $G$ by a discrete subgroup $\Gamma$ of $G$.
In this paper, we will choose the left quotient of $G$ and denote it by  $M=\Gamma\backslash G$.
We will consider compact nilmanifolds, or equivalently cocompact subgroups~$\Gamma$. We denote by $x\mapsto \dot x$ 
the canonical projection which associates to $x\in G$ its class modulo $\Gamma$ in~$M$.

Recall that the Haar measure $dx$ on $G$ is unique up to a constant and, once it is fixed, $d\dot x$ is the only $G$-invariant measure on $M$ satisfying 
for any  function $f:G\to \mathbb C$, for instance continuous with compact support,
\begin{equation}
\label{eq_dxddotx}
	\int_G f(x) dx = \int_M \sum_{\gamma\in \Gamma} f(\gamma x) \ d\dot x.
\end{equation}
We may allow ourselves to write $dx$ for the measure on $M$ when the variable of integration is $x\in M$ and no confusion with the Haar measure is possible.

The canonical projection $G\to M$ induces a one-to-one correspondence between the set of functions on $M$   with the set of $\Gamma$-left periodic functions on $G$, that is, the set of functions $f$ on $G$ satisfying
$$
\forall x\in G,\;\;\forall \gamma\in \Gamma ,\;\; f(\gamma x)=f(x).
$$
With a function $f$ defined on $M$, we associate the $\Gamma$-left periodic function $f_G:x \mapsto f(\dot x)$ defined on $G$. Conversely, a
 $\Gamma$-left periodic function $f$ on $G$ naturally defines a function $f_M: \dot x \mapsto f(x)$ on $M$. 

Consider a linear continuous mapping $T:\mathcal S(G)\to \mathcal S'(G)$  
which is invariant under $\Gamma$ in the sense that
$$
\forall F\in \mathcal S(G), \;\; \forall \gamma\in \Gamma,
\;\; { T(F(\gamma\, \cdot)) = (TF)(\gamma\, \cdot)}.
$$ 
Then it  naturally induces \cite{F2022} an operator $T_M$ on $M$ via
$$
T_M f = (Tf_G)_M.
$$
Furthermore, $T_M:\mathcal D(M)\to \mathcal D'(M)$  is 
a linear continuous mapping. 
 Note that if $T$ is invariant under $G$, then it is invariant 
under $\Gamma$.
For instance, any left-invariant differential operator $T$ on $G$ induces a corresponding differential operator $T_M$ on~$M$.

\smallskip

Let us now assume that $G$ is a Carnot group.
The intrinsic sub-Laplacian on $M$ is 
 the operator $\mathbb L_M$ induced by $\mathbb L_G$  on $M$.
It is a differential operator that is essentially self-adjoint on $L^2(M)$; we will keep the same notation for its self-adjoint extension.
The spectrum of $-\mathbb L_M$  is a discrete and unbounded subset of $[0,+\infty)$.
Each eigenspace of $\mathbb L_M$ has finite dimension.
The constant functions on $M$ form the 0-eigenspace of $\mathbb L_M$, see e.g. \cite{F2022}.

\subsubsection{Objectives}

In this paper, we   consider   nilpotent Lie
 groups $G$ of step $s=2$ equipped with a scalar product. They are naturally stratified, (see Section \ref{subsubsec_step2G}) 
and so they will also be Carnot.
We will focus our attention on
sequences of eigenfuctions  $(\psi_k)_{k\in\N}$ and eigenvalues $(E_k)_{k\in\N}$ of $\mathbb L_M$, ordered in increasing order {and repeated according to multiplicity}: 
\begin{equation}\label{eq:eigen}
- \mathbb L_M \psi_k=E_k\psi_k,\qquad E_1\leq E_2\leq  \cdots \leq E_k\leq \cdots,\qquad E_k\Tend{k}{\infty} +\infty.
\end{equation}
We are interested in the measures on $M$ that are  limit points of the densities $|\psi_k(x)|^2 dx$ as $k$ tends to $+\infty$. Our result extends to operators
\[ -\mathbb L^{\mathbb U}_M= -\mathbb L_M +\mathbb U(x)
\]
where $x\mapsto \mathbb  U(x)$ is a smooth potential on $M$.
Our analysis will be using a semi-classical approach based on the harmonic analysis on the group $G$ in order to derive invariance properties of these measures.

   \subsection{Fourier analysis of step-two groups }
   Our semi-classical approach is based on the Fourier theory of the group, as developed in Harmonic analysis (see for example~\cite{FS,R+F_monograph}). In the rest of this paper, we will consider only a  nilpotent Lie group $G$ of step two
and its associated compact nilmanifolds
$M=\Gamma\backslash G$.

\subsubsection{Step-two groups}
\label{subsubsec_step2G}
As $G$ is step two, the derived algebra
$\mathfrak z:=[\mathfrak g, \mathfrak g]$ lies in the centre of $\mathfrak g$. Moreover,
denoting by $\mathfrak v$ a complement of $\mathfrak z$,  we have the decomposition:
\[
 \mathfrak g= \mathfrak v \oplus \mathfrak z.
\]
Note that $\mathfrak z =[\mathfrak v, \mathfrak v] $ and that this decomposition yields a stratification of $\mathfrak g$ with $\mathfrak g_1= \mathfrak v$, $\mathfrak g_2 =\mathfrak z$. Hence $G$ is naturally stratified with dilations given by 
 $\delta_\eps (V+Z)=\eps V +\eps^2 Z$ where 
 $\eps>0$, $V\in \mathfrak v$, $Z\in \mathfrak z$.
 Its topological dimension is $n= \dim \mathfrak v +\dim \mathfrak z$
while the homogeneous dimension is  
 $Q=\dim \mathfrak v + 2 \dim  \mathfrak z$. 
 We also assume that a scalar product has been fixed on $\mathfrak g$, and that $\mathfrak v$ is an orthogonal complement of $\mathfrak z$.

\subsubsection{The dual set}
\label{subsubsec_Ghat}
The dual set $\widehat G$ of $G$ is the set of  the equivalence classes of the irreducible unitary representations of $G$. We will often allow ourselves to identify a class of such representations with one of its representatives.
Since $G$ is a  nilpotent Lie group, its dual is the disjoint union of the (classes of unitary irreducible) representations of dimension one and of infinite dimension:
$$
\widehat G=\widehat G_1\sqcup \widehat G_\infty, 
\quad \widehat G_1:=
\{ \mbox{class\; of} \; \pi, \ \dim \pi=1 \}, \quad 
\widehat G_\infty:=
\{  \mbox{class\; of} \; \pi, \ \dim \pi=\infty \}.
$$
As $G$ is step two,  $\widehat G_1$ and $\widehat G_\infty$  can be described in a relatively simple manner.
\smallskip

(i) The (classes of unitary irreducible) one-dimensional representations are parametrized by the elements  $\omega\in \mathfrak v^*$ of the dual of $\mathfrak v$ and consists of the characters 
\[
 \pi^\omega (x)= {\rm e}^{i \omega(V)},\;\; x=\exp_G(V+Z), \;\;V\in \mathfrak v,\;\; Z\in\mathfrak z.
\]

(ii) The (classes of unitary irreducible) infinite  dimensional  representations are parametrised by a non-zero element $\lambda \in \mathfrak z^* \setminus\{0\} $ of the dual of $\mathfrak z$ and another parameter $\nu\in \mathfrak v^*$ which we now describe.  
  For any~$\lambda \in  \mathfrak z^\star$, we consider the skew-symmetric bilinear form on $\mathfrak v$ defined by 
\begin{equation}\label{skw}
\forall \, U,V \in  \mathfrak v \, , \quad B(\lambda) (U,V):= \lambda([U,V]) \, .
 \end{equation}
 We denote by  $\mathfrak r_\lambda$ the radical of $B(\lambda)$.
 The other parameter $\nu$ will be in the dual $\mathfrak r_\lambda^*$ of this radical.
 
 Using the scalar product on $\mathfrak g$,  we can construct the representation $\pi^{\lambda,\nu}$  for each $\lambda \in \mathfrak z^* \setminus\{0\}$ and $\nu \in \mathfrak r_\lambda^*$ as follows.
 First, we will allow ourselves to keep the same notation for the skew-symmetric form $B(\lambda)$ and the corresponding skew-symmetric linear map on $\mathfrak v$. Hence  $\mathfrak r_\lambda = \ker B(\lambda)$. As $ B(\lambda)$ is skew symmetric, we find an orthonormal basis of $\mathfrak v$
  $$\big (P_1^{\lambda} , \dots ,P_d^{\lambda},  Q_1^{\lambda} , \dots ,Q_d^{\lambda},R_1^{\lambda},\dots,R_k^{\lambda}\big )
  \qquad \mbox{with}\quad 
 k=k_\lambda := \dim \mathfrak r_\lambda , \quad 
 d=d_\lambda:=\frac{\dim \mathfrak v -k}2,
  $$ 
    where 
 the  matrix of~$B(\lambda)$ takes the block form
\begin{equation}
    \label{eq_matB}
    \begin{pmatrix}
  0_{d,d} & D(\lambda) & 0_{d,k}\\
  -D(\lambda)& 0_{d,d} & 0_{d,k}\\
  0_{k,d} & 0_{k,d} & 0_{k,k}
\end{pmatrix}.
\end{equation}
  Here $D(\lambda)$ is a diagonal matrix with positive diagonal entries depending on $\lambda$.
Note that 
$\mathfrak r_\lambda = \mbox{Span} \, \big (R_1^\lambda , \dots ,R_k^\lambda \big)$
and we decompose $ \mathfrak v$ as
$$
\mathfrak v = \mathfrak p_\lambda +  \mathfrak q_\lambda  +\mathfrak r_\lambda
\quad\mbox{where}\quad
\mathfrak p_\lambda:= \mbox{Span} \, \big (P_1^\lambda , \dots ,P_d^\lambda \big) \quad, \qquad \mathfrak q_\lambda:= \mbox{Span} \, \big (Q_1^\lambda , \dots ,Q_d^\lambda).
$$
 One may assume that the  above basis for $\mathfrak{v}$ depends continuously on $\lambda$.

The representation $\pi^{\lambda,\nu}$ acts on $L^2(\mathfrak p_\lambda)$ via
{
\begin{equation}\label{def:pilambdanu}
\pi^{\lambda,\nu}(x) \phi (\xi)
=e^{i\lambda\left(Z+ \left[ D(\lambda)^{\frac 12}\xi +\frac 12 P, Q\right]\right)}
e^{i\nu (R)}\phi\left(D(\lambda)^{\frac 12}\xi+P\right) ,
\quad \phi \in L^2(\mathfrak p_\lambda), \quad \xi\in \mathfrak p_\lambda,
\end{equation}
}
where $x$ is written as
$x = \exp_G (P+Q+R+Z)$ with $P\in \mathfrak p_\lambda$, $Q\in \mathfrak q_\lambda$, $R \in \mathfrak r_\lambda$, $Z\in \mathfrak z.$ If $\nu=0$, we will use the shorthand $\pi^{\lambda,0}=\pi^\lambda$.

With the representations described in (i) and (ii) above, the dual set of $G$ is:
$\widehat G=\widehat G_1 \sqcup \widehat G_\infty$ with 
$$
\widehat G_1=
\{ \mbox{class\; of} \; \pi^\omega, \,\omega\in  \mathfrak v^* \} \quad \mbox{and}\quad
\widehat G_\infty=
\{  \mbox{class\; of} \; \pi^{\lambda,\nu},\,\lambda\in\mathfrak z^*\setminus \{0\},\, \nu\in \mathfrak r_\lambda^*\}.
$$
This can be justified  in this case with the von Neumann theorem characterising the representations of the Heisenberg groups.
Equivalently, we can also use the orbit method which states that there is a one-to-one correspondence between $\pi\in \widehat G$ and the co-adjoint orbits $\mathfrak g^* /G$. 
The advantage of  the orbit method is that the Kirillov map $\mathfrak g^* /G \rightarrow \widehat G$ is a homeomorphism
\cite{brown},
giving us easy information on the topology of subsets of  $\widehat G$.
Furthermore,  one can  check that the co-adjoint action of $G$ on $\mathfrak g^* = \mathfrak v^* \oplus \mathfrak z^*$ leaves the $\mathfrak z^*$-component invariant. Hence,  we can  describe the co-adjoint orbit of any $\nu+\lambda \in \mathfrak g^* = \mathfrak v^* \oplus \mathfrak z^*$ by choosing
the unique representative as  the linear form $\omega=\nu$ if $\lambda=0$, and $\lambda+\nu$ with $\nu \in \mathfrak r_\lambda^* $ if $\lambda\not=0$. 
Via the Kirillov map, they correspond respectively to~$\pi^\omega$ and $\pi^{\lambda,\nu}$.

\subsubsection{The subsets $\Omega_k$ and $\Lambda_0$}
\label{subsubsec_Omegak}
As a set, $\mathfrak z^*\setminus \{0\}$ decomposes as the disjoint union of 
$$
\Omega_k:=\{\lambda \in \mathfrak z^*\setminus\{0\} \ : \ \dim \mathfrak r_\lambda =k\}, \quad k\in \N.
$$
Observe that 
$\Omega_k=\emptyset$  when $k>\dim \mathfrak v$ and also when $k=\dim \mathfrak v$ because if $k_{\lambda}=\dim \mathfrak v$ then $\mathfrak r_{\lambda}=\mathfrak v^{*}$, thus $B_{\lambda}=0$ and $\lambda=0$.  We denote by $k_0$ the smallest $k\in \N$ such that $\Omega_k\neq\emptyset$; roughly speaking, 
this is the set of $\lambda\in \mathfrak z^*$ for which $B(\lambda)$ is of smallest kernel. We  have 
\[
\mathfrak z^*\setminus \{0\} = \sqcup_{k_0\leq k < \dim \mathfrak v} \, \Omega_k.
\]

{We can describe $\cup_{k'\geq k}\Omega_{k'}$ as the set of $\lambda\in\mathfrak z^*\setminus \{0\} $ such that all the minors of $B(\lambda)$ (viewed as a matrix in the
basis that we have fixed) of order $\leq \dim \mathfrak v - k$ cancel, 
and $\Omega_k$ as the subset of $\cup_{k'\geq k}\Omega_{k'}$ formed by the $\lambda's$
such that at least one minor of order $=\dim \mathfrak v - k$
does not vanish.
Since $B(\lambda)$ is linear in $\lambda$,
 $\cup_{k'\geq k}\Omega_{k'}$ is an algebraic variety, and $\Omega_k$ is an open subset of it. Moreover, if $\Omega_k\neq \emptyset $ then $\cup_{k'> k}\Omega_{k'}$ is an algebraic subvariety with $\dim \cup_{k'> k}\Omega_{k'}<\dim \cup_{k'\geq k}\Omega_{k'}$.
 Consequently, 
$\Omega_k$ is an open subset of $\cup_{k'\geq k}\Omega_{k'}$
and it is either empty or dense in $\cup_{k'\geq k}\Omega_{k'}$.
}

{
We can decompose each $\Omega_k$ into further subsets, according to the multiplicity of the eigenvalues of
$B(\lambda)$ viewed as a matrix in a canonical basis.
Here, {we will be only considering} the case $k=k_0$
and denote by $\Lambda_0$ the set of $\lambda\in \Omega_{k_0}$ for which $B(\lambda)$ has the maximal number of distinct eigenvalues.
Recall that, by  the Cauchy residue formula, the multiplicity of a zero $z_0$ of a polynomial $p(z)$ is equal to $\oint_{|z -z_0| = r} \frac{p'(z)}{p(z)}dz  $ for $r$ small enough.
Applying this to $\det (B(\lambda)^2-z)$
in the case of maximal multiplicities implies that 
the multiplicities of the eigenvalues of $B(\lambda)^2$ for $\lambda\in \Lambda_0$ are locally constant and that the subset $\Lambda_0$ is open  in $\Omega_{k_0}$. Moreover, by the implicit function theorem, the eigenvalues of $B(\lambda)^2$ can be written locally as smooth  functions (even algebraic expressions)
of $\lambda\in \Lambda_0$.
Similar properties hold for each subset of $\Omega_{k_0}$ with fewer constraints on the multiplicities, implying that $\Lambda_0$ is dense in 
$\Omega_{k_0}$.
}

\smallskip

The Heisenberg groups correspond to the case when  $\dim \mathfrak z=1$ while 
the Heisenberg-type groups are exactly the  step-two nilpotent groups $G$ 
for which 
$B(\lambda)^2 = -|\lambda|^2 {\rm I}_{\mathfrak v}$.
Heisenberg-type groups and their nilmanifolds
have an H-type foliation as in~\cite{BGMR}, 
and so do the groups $G$ and their nilmanifolds 
when, more generally,  every $B(\lambda)$, $\lambda\in \mathfrak z^*\setminus\{0\}$, has a trivial radical $\mathfrak r_\lambda=\{0\}$.
Geometrically, these nilmanifolds are contact manifolds when the radicals are all trivial and $\dim\mathfrak z=1$,
and they are quasi-contact manifolds
 when the radicals may not be trivial. 
The analysis of the properties of weak limits of densities of eigenvalues of the sub-Laplacian  for contact manifolds was studied in~\cite{CdVHT} and for quasi-contact manifold of dimension four with radical  generically of dimension one was studied in \cite{Savale}. 

\smallskip

As the co-adjoint action is trivial on the $\mathfrak z^*$-component, 
the sets $\Omega_k$   may be viewed as the unions of the co-adjoint orbits of $\nu+\lambda \in \mathfrak g^* = \mathfrak v^* \oplus \mathfrak z^*$ with $\lambda \in \Omega_k$, or our chosen representatives for those co-orbits:
\begin{equation}
    \label{eq_Omegak_simlambdanu}
\Omega_k\sim 
\{(\lambda,\nu) \in \mathfrak z^* \times \mathfrak v^*, 
  \ \lambda\in \Omega_k,\, \nu\in \mathfrak r_\lambda^*\},
\end{equation}
and therefore identified via Kirillov's map with the following subset of $\widehat G$
$$
\Omega_k\sim 
\{\pi=\pi^{\lambda,\nu} \in \widehat G_\infty,  \ \lambda\in \Omega_k,\, \nu\in \mathfrak r_\lambda^*\}.
$$
We also proceed similarly for  $\Lambda_0$.
As subsets of $\widehat G_\infty$, they enjoy the same topological properties; for instance,
$\Omega_{k_0}$ which is an open dense subset of $\widehat G_\infty$.

\subsubsection{The Fourier transform} 
\label{subsubsec_FG}
Let $f\in L^1(G)$, the Fourier transform of $f$ is the field of operators   
$$
{\mathcal F} (f):=\{ \widehat f(\pi): {\mathcal H}_\pi \to {\mathcal H}_\pi, \pi\in \widehat G\}
\quad\mbox{given by}\
 \widehat f(\pi) 
 =\int_G f(x) \pi (x)^*dx, 
$$ 
for any (continuous unitary) representation $\pi$ of $G$.

The unitary dual $\widehat G$  is a standard Borel space, and there exists a unique positive Borel measure~$\mu$ on $\widehat G$ such that 
for any continuous function $f:G\to \mathbb C$ with compact support we have
 $$
 \int_G |f(x)|^2 dx =  \int_{\widehat G} \| \widehat f(\pi) \|_{HS({\mathcal H}_\pi)}^2 d\mu(\pi).
 $$
The measure $\mu$ is called the Plancherel measure and the formula above the Plancherel formula.
For instance, in the case of step-two groups, 
the Plancherel measure  is given  by 
$d\mu(\pi^{\lambda,\nu}) 
= c_0 \det (D(\lambda))\,   d\lambda d\nu,$
for a known constant $c_0>0$ \cite{CG,MuRi}; note that it  is supported on the subsets $\Omega_{k_0}$ or even $\Lambda_0$ of $\widehat G_\infty$ defined in Section~\ref{subsubsec_Omegak}.

The Plancherel formula extends the group Fourier transform  unitarily 
 to functions $f\in L^2(G)$:  their Fourier transforms are then a Hilbert-Schmidt fields of operators satisfying the Plancherel formula.
The group Fourier transform also extends readily to  classes of distributions, for instance the distributions with compact support and the  distributions whose associated right convolution operators are bounded on $L^2(G)$.
If $T$ is the associated operator, we denote by $\widehat T$ or $\pi(T)=\widehat T(\pi)$ the associated field of operators with
{
\[
\mathcal F (Tf) (\pi)= \pi(T)\circ  \mathcal F f(\pi) ,\;\;\forall f\in\mathcal S(G).
\]
}
In particular, the group Fourier transform extends to left-invariant differential operators. 

\smallskip

The considerations above are known for any nilpotent Lie group, and let  us consider the case of step-two groups.
The group Fourier transform  of $f\in L^1(G)$
gives  a scalar number at $\pi=\pi^\omega$ and  a bounded operator on 
$\mathcal H_{\pi^{\lambda,\nu}}= L^2( \mathfrak p^\lambda)$ for $\pi=\pi^{\lambda,\nu}$.
It is easy to compute that for the 1-dimensional representation, we have
$\pi^\omega(-\mathbb L_G)=|\omega|^2$. In the remainder of the paper, we will use the notation $\pi(\mathbb L)$
and $\widehat {\mathbb L} = \{ \pi(\mathbb L), \pi\in \widehat G\}$ and omit the index~$G$ in this context. 
 The case of representations of infinite dimension is more involved.
The following is known in great generality \cite{R+F_monograph}:
\begin{enumerate}
\item $\mathbb L_G$ and $\pi(\mathbb L)$ for $\pi\in \widehat G$  are essentially self-adjoint on $L^2(G)$ and $\mathcal H_\pi$;  we keep the same notation for their self-adjoint extensions. 
Hence they both admit spectral decompositions.
\item\label{item_sp} For each $\pi\in \widehat G\setminus \{1_{{\widehat G}}\}$, the spectrum ${\rm sp}(\pi(-\mathbb L))$ of 
 $\pi(-\mathbb L)$ is discrete and lies  in $(0,\infty)$ and each eigenspace is finite dimensional, while for $\pi=1_{\widehat G}$, $\pi(\mathbb L)=0$. 
\item Consider the spectral decomposition $\bP_\zeta$, $\zeta\geq 0$, of $-\mathbb L_G$, i.e. $-\mathbb L_G = \int_{0}^{\infty} \zeta d\bP_{\zeta}$.
For each $\pi\in \widehat G\setminus \{1_{{\widehat G}}\}$,
 the group Fourier transform $\pi( \bP_\zeta)$ of the projections $\bP_\zeta$ are  orthogonal projections of $\mathcal H_\pi$.
 Furthermore, they yield a spectral decomposition of $-\widehat {\mathbb L}$:
 $\pi(-\mathbb L)= \sum_{\zeta\in {\rm sp}(\pi(-\mathbb L)) } \zeta \pi(\bP_{\zeta})$.
\end{enumerate}
\smallskip 

In the step-two case, some of the properties above are easy to see. Indeed,  denoting by 
$$
\eta_j=\eta_j(\lambda), 1\leq j\leq d,
\quad\mbox{with the convention}\ 
0<\eta_1(\lambda) \leq \ldots \leq \eta_d(\lambda),
$$ 
the positive entries of 
$D(\lambda) = {\rm diag} (\eta_1,\ldots,\eta_d)$,
we readily compute 
{
\begin{equation}\label{eq:pi(P)}
    \pi^{\lambda,\nu}(P^\lambda_j)=\sqrt{\eta_j(\lambda)}  \partial_{\xi_j}\;\;\mbox{and}\;\;
    \pi^{\lambda,\nu}(Q^\lambda_j)=i\sqrt{\eta_j(\lambda)} \xi_j 
\end{equation}
and deduce from the additional observation 
$\pi^{\lambda,\nu}(R^\lambda_\ell)=i\nu_\ell$, $1\leq l\leq k$.
}
$$
\pi^{\lambda,\nu} (-\mathbb L)= H(\lambda)+|\nu|^2,
$$
where $H(\lambda)$ is the operator  on  $\mathcal H_\lambda$ given by 
\[
H(\lambda) =\sum_{1\leq j\leq d} \eta_j(\lambda) (-\partial_{\xi_j}^2 +\xi_j^2).
\]
which is 
{up to  multiplicative factors}   the harmonic oscillator of  $L^2(\R^d)$.
Recall that Hermite functions give 
an orthonormal basis of eigenfunctions of  $H(\lambda)$
with eigenvalues 
\begin{equation}
    \label{eq_zeta}
    \zeta(\alpha,\lambda)
:= \sum_{1\leq j\leq d} (2\alpha_j+1)\eta_j(\lambda), 
\qquad \alpha\in\N^d,
\end{equation}
see Section \ref{subsubsec_comput}.
Hence, 
the spectrum of $\pi^{\lambda,\nu} (-\mathbb L)$ is 
${\rm sp}(\pi^{\lambda,\nu} (-\mathbb L)) = \left\{ \zeta(\alpha,\lambda)+|\nu|^2,\;\alpha\in\N^d\right\},$
giving in this special case Property \eqref{item_sp} above.
Furthermore, the spectral projections $\pi^{\lambda,\nu}(\bP_\zeta)$  onto the eigenspaces of~$H(\lambda)$ are either zero or  orthogonal projections onto subspaces generated by Hermite functions. 

The properties above hold for any $\lambda\in \mathfrak z^* \setminus \{0\}$.
Restricting to $\Lambda_0$,  each $\eta_j(\lambda)$ is a smooth function of $\lambda\in\Lambda_0$ 
since the $\eta_j^2$'s are the eigenvalues of $B(\lambda)^2$ which are diagonalisable linear morphisms with eigenvalues of constant multiplicities depending smoothly on $\lambda$.
Therefore, $ \zeta(\alpha,\lambda)$ in \eqref{eq_zeta}  also depends smoothly on $\lambda$ in $\Lambda_0$.

\subsection{Main result}

Let $x\mapsto \mathbb U(x)$ be a smooth potential on $M$. Let $(\psi_k^{\mathbb U})_{k\in\N}$ be a sequence of eigenfunctions of $-\mathbb{L}_M^\mathbb{U}=-\mathbb L_M+\mathbb U$ according to
\begin{equation}\label{eq:eigenbis}
-\mathbb{L}_M^\mathbb{U}\psi_k^{\mathbb U}= E_k^{\mathbb U}\psi_k^{\mathbb U},\;\; k\in\N.
\end{equation}
{
Without loss of generality, we may assume $E_k^{\mathbb U}\geq 0$ for all $k\in\N$ (if not, we modify $\mathbb U$ by a constant).
}
Let $\varrho$ be a weak limit of  the density~$ | \psi_k^{\mathbb U}(x)|^2 dx$, then $\varrho$ decompose according to the structure of~$\widehat G$ and each of the elements of this decomposition enjoys its own invariances. These invariances {are expressed} in terms of the elements $\omega$, $\lambda$ and $\nu$ characterizing the points of $\widehat G$. 
    We will need the following notation to state  the result.
\begin{enumerate}
\item[(a)] 
For each $\lambda \in \mathfrak z^*$
and $\nu\in \mathfrak r_\lambda^*$, 
we  associate 
\[ 
\nu \cdot R^\lambda:= \nu_1 R^\lambda_1 + \cdots + \nu_k R^\lambda_k\ \in \mathfrak r_\lambda,
\]
where the $\nu_j$'s are the coordinates of $\nu$ in the dual of the orthonormal basis $(R_1^\lambda,\cdots, R_k^\lambda)$, i.e.
$\nu = \nu_1 (R_1^\lambda)^* + \ldots + \nu_k (R_k^{\lambda})^*$.
This definition is independent of the choice of the orthonormal basis $(R_1^\lambda,\cdots, R_k^\lambda)$  for $\mathfrak r_\lambda$.
\item[(b)] In the same spirit, 
for any $\omega\in \mathfrak v^*$, 
we associate 
$$
\omega \cdot V:= \omega_1 V_1 + \cdots \omega_q V_q \ \in \mathfrak v,
$$
where the $\omega_j$'s are the coordinates of $\omega$ in the dual of an orthonormal basis $(V_1,\cdots, V_q)$:
$\omega= \omega_1 V_1^* + \ldots + \omega_q V_q^*$.
Here, $q=\dim \mathfrak v$.
This definition is independent of the choice of the orthonormal basis $(V_1,\cdots, V_q)$  for $\mathfrak v$.
\item[(c)] If $k_0=0$
and $\lambda\in \Lambda_0$, each eigenvalue $\zeta=\zeta(\alpha,\lambda)$  in \eqref{eq_zeta} 
of $\pi^\lambda(\mathbb L)$
 depends smoothly on $\lambda$ in $\Lambda_0$.
The vector in $\mathfrak z$ corresponding to the gradient at $\lambda$ is denoted  by 
$$
\nabla_\lambda \zeta(\alpha,\lambda)=\nabla_\lambda \zeta \in \mathfrak z.
$$
\end{enumerate}

  \begin{theorem}[\cite{FF3,FL,FFF}]
  \label{theo:main}
 Let $(\psi_k^{\mathbb U})_{k\in\N}$ be a sequence of eigenfunctions of $-\mathbb{L}_M^\mathbb{U}=-\mathbb L_M+\mathbb U$ according to~\eqref{eq:eigenbis}. 
Then  a weak limit  $\varrho$  of  the density~$ | \psi_k^{\mathbb U}(x)|^2 dx$ decomposes as 
\begin{equation}\label{decomposition}
\varrho= \varrho^{\mathfrak v}+ \varrho^{\mathfrak z}
\end{equation}
with
\begin{enumerate}
\item
$\displaystyle{
\varrho^{\mathfrak v} (x) =
\int_{\mathfrak v^*} 
\varsigma \left(x,d\omega\right)
}$
where the measure $\varsigma$ is invariant by the flow
$$
(x,\omega) \mapsto ({\rm Exp} (s\,  \omega\cdot  V ) x,\omega),\;\; s\in\R
$$
 \item  $\displaystyle{
\varrho^{\mathfrak z} (x)= \sum_{k=0}^{\dim \mathfrak v -1}
\int_{(\lambda,\nu)\in \Omega_k}
\gamma_k(x,d\lambda,d\nu ) 
}$
with the identification \eqref{eq_Omegak_simlambdanu} for $\Omega_k$, 
with  each measure $\gamma_k(x,\lambda,\nu)$ being supported in $M\times \Omega_k$  where it is invariant under the flow given by
 $$
(x,(\lambda,\nu)) \longmapsto  ({\rm Exp} (s\, \nu \cdot R^{\lambda} ) x,(\lambda,\nu)),\quad s\in\R.
$$
\item 
Furthermore, in the case when $\Omega_0\neq \emptyset$, omitting $\nu=0$,
$$
\gamma_0(x,\lambda)
=\sum_{\alpha\in \N} \gamma_0^{(\alpha)}(x,\lambda),
$$
with each measure $1_{\lambda\in \Lambda_0}\gamma_0^{(\alpha)}$
being supported on $M\times \Lambda_0$ where it is invariant under the flow given by
{
$$
 (x,\lambda) \longmapsto  ({\rm Exp} (s\, \nabla_\lambda \zeta(\alpha,\lambda)) x,\lambda), \quad \ s\in\R.
$$
}
\end{enumerate}
\end{theorem}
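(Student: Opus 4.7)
The strategy is semi-classical. First, I would set $h_k := (E_k^{\mathbb U})^{-1/2}$ so that \eqref{eq:eigenbis} becomes
\[
\bigl(-h_k^2 \mathbb L_M + h_k^2 \mathbb U\bigr) \psi_k^{\mathbb U} = \psi_k^{\mathbb U},
\]
with $h_k \to 0$. Using the group-theoretic semi-classical pseudo-differential calculus on nilmanifolds developed in \cite{FF3,FL,FFF}, one quantizes symbols $a$ on $M \times \widehat G$ (with values in operators on $\mathcal H_\pi$) into operators $\Op_{h_k}(a)$ on $M$. Suitable continuity estimates and a diagonal extraction then produce, along a subsequence, a positive operator-valued measure $\Gamma$ on $M \times \widehat G$ such that
\[
\langle \Op_{h_k}(a)\psi_k^{\mathbb U}, \psi_k^{\mathbb U}\rangle \longrightarrow \int_{M \times \widehat G} \mathrm{Tr}\bigl(a(x,\pi)\, d\Gamma(x,\pi)\bigr),
\]
and taking $a$ to depend only on $x$ identifies the weak limit $\varrho$ of the densities with the $M$-marginal of $\Gamma$.

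The decomposition \eqref{decomposition} then follows from the splitting $\widehat G = \widehat G_1 \sqcup \widehat G_\infty$: the restriction of $\Gamma$ to $\widehat G_1 \simeq \mathfrak v^*$ is a scalar measure $\varsigma(x, d\omega)$ whose marginal in $x$ gives $\varrho^{\mathfrak v}$, while the restriction to $\widehat G_\infty$ splits further along the strata $\Omega_k$ (viewed as the $(\lambda,\nu)$-parameter space via \eqref{eq_Omegak_simlambdanu}) into the measures $\gamma_k$ contributing $\varrho^{\mathfrak z}$.

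To extract the invariances I use that $\psi_k^{\mathbb U}$ is an eigenvector, so
\[
\bigl\langle [\Op_{h_k}(a), -h_k^2 \mathbb L_M^{\mathbb U}]\psi_k^{\mathbb U}, \psi_k^{\mathbb U}\bigr\rangle = 0,
\]
and divide by the appropriate power of $h_k$. Passing to the limit yields a transport identity
\[
\int_{M \times \widehat G} \mathrm{Tr}\bigl(\{\pi(-\mathbb L), a\}\, d\Gamma\bigr) = 0,
\]
where $\{\cdot,\cdot\}$ is the non-commutative analogue of the Poisson bracket dictated by the calculus. On $\widehat G_1$, $\pi^\omega(-\mathbb L) = |\omega|^2$ is scalar and the bracket is the classical Hamiltonian vector field proportional to $\omega\cdot V$ on $M$, giving item (1). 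On $\Omega_k \subset \widehat G_\infty$, $\pi^{\lambda,\nu}(-\mathbb L) = H(\lambda) + |\nu|^2$, and testing against symbols of the form $a(x,\lambda,\nu)\,\Pi$ where $\Pi$ is a spectral projector of $H(\lambda)$ makes the operator-valued part drop out of the bracket, leaving only the scalar $|\nu|^2$ contribution, whose Hamiltonian flow on $M$ is in the direction $\nu\cdot R^\lambda$ because $\pi^{\lambda,\nu}(R^\lambda_\ell) = i\nu_\ell$ by \eqref{eq:pi(P)}. This yields item (2); the potential $\mathbb U$ contributes only a term of order $h_k^2$, which is negligible in the limit.

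The main obstacle is item (3). On $\Lambda_0$ the eigenvalues $\zeta(\alpha,\lambda)$ of $H(\lambda)$, together with their spectral projectors $\Pi_\alpha(\lambda)$, depend smoothly on $\lambda$, so one can refine $\gamma_0$ into pieces $\gamma_0^{(\alpha)}$ carried on $\Lambda_0$. To produce the claimed transport in the $\mathfrak z$-direction I would test against symbols of the form $a(x,\lambda)\Pi_\alpha(\lambda)$ and carefully expand the commutator $[\Op_{h_k}(a\Pi_\alpha), -h_k^2 \mathbb L_M]$: on the selected Hermite subspace, $H(\lambda)\Pi_\alpha(\lambda) = \zeta(\alpha,\lambda)\Pi_\alpha(\lambda)$ becomes scalar, so $\zeta(\alpha,\cdot)$ plays the role of a classical symbol on $\mathfrak z^*$ and its Hamiltonian flow on $M$ is precisely the direction $\nabla_\lambda \zeta(\alpha,\lambda) \in \mathfrak z$. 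The delicate point is to make these operator-valued manipulations rigorous---in particular, to handle the $\lambda$-dependence of $\Pi_\alpha(\lambda)$ without generating spurious sub-principal terms in the bracket, and to verify that contributions from near the boundary of $\Lambda_0$ vanish thanks to the density of $\Lambda_0$ in $\Omega_{k_0}$.
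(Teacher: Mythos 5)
Your overall strategy coincides with the paper's: set $\eps_k=(E_k^{\mathbb U})^{-1/2}$, extract operator-valued semi-classical measures, obtain localization and invariance by passing to the limit in $\bigl\langle[\Op_{\eps_k}(\sigma),-\eps_k^2\mathbb L_M^{\mathbb U}]\psi^{\eps_k},\psi^{\eps_k}\bigr\rangle=0$, and handle the splitting $\widehat G=\widehat G_1\sqcup\widehat G_\infty$ via elements of the von Neumann algebra generated by $\mathcal A_0$. Items (1) and (2) are correctly sketched. The precise mechanism behind item (2) -- what makes, in your wording, "the operator-valued part drop out" -- is the vanishing $\pi(\bP_\zeta)\pi(P^\lambda_j)\pi(\bP_\zeta)=0=\pi(\bP_\zeta)\pi(Q^\lambda_j)\pi(\bP_\zeta)$, which is what leaves only the scalar $\nu\cdot R^\lambda$ contribution of $\pi(V)\cdot V$ in the projected limit equation.

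There is a genuine gap in your treatment of item (3). On $\Omega_0$ the first-order argument -- divide the commutator by $\eps$, use that $\sigma$ commutes with $\widehat{\mathbb L}$ so the $\eps^{-1}$ term drops, pass to the limit -- gives $\int\mathrm{Tr}\bigl(\pi(V)\cdot V\sigma\,\Gamma\bigr)d\gamma=0$, but after sandwiching by $\pi(\bP_\zeta)$ this collapses to $0=0$: on $\Omega_0$ the radical is trivial, $\mathfrak v=\mathfrak p_\lambda\oplus\mathfrak q_\lambda$, and the entire $\pi(V)\cdot V$ is killed by the projectors. Your heuristic that "$\zeta(\alpha,\lambda)$ plays the role of a classical symbol on $\mathfrak z^*$ and its Hamiltonian flow is $\nabla_\lambda\zeta$" points to the right conclusion but is not a derivation; the calculus does not produce a $\mathfrak z^*$-Poisson bracket at first order. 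The paper's resolution is a second-order bootstrap: observe that $\pi(V)\cdot V\sigma$ is itself a commutator with the sub-Laplacian symbol, namely $\pi(V)\cdot V\sigma=[T\sigma,\pi(-\mathbb L)]$ with $T=\tfrac i2\,\bigl(B(\lambda)^{-1}V\cdot\pi^\lambda(V)\bigr)$ (Lemma~\ref{lem:computation}), then reinsert the eigenfunction relation with $T\sigma$ in place of $\sigma$ to obtain an $O(\eps)$ relation. The gradient $\nabla_\lambda\zeta$ only appears through the algebraic identity $[P_j^\lambda,Q_j^\lambda]=\nabla_\lambda\eta_j(\lambda)$ (Lemma~\ref{lem:[P,Q]}), which is the step that actually converts Lie brackets of the $\lambda$-adapted basis of $\mathfrak v$ into a $\lambda$-gradient in $\mathfrak z$. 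You flag the "delicate point" (the $\lambda$-dependence of $\Pi_\alpha(\lambda)$) but leave it open; without the bootstrap via $T$ and the identity of Lemma~\ref{lem:[P,Q]}, the claimed $\nabla_\lambda\zeta$-transport does not follow from your expansion.
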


In the case of the groups of Heisenberg type, $\eta_j(\lambda)=|\lambda|$ for all $j$, 
so 
$\Lambda_0=\Omega_0=\mathfrak z^*\setminus\{0\}$ and
\begin{equation}
    \label{eq_mathcalZlambda}
    \nabla_\lambda \zeta(\alpha,\lambda) = 
\mathcal Z^\lambda  \sum_{j=1}^{\dim \mathfrak v/2} (2\alpha_j+1), 
\qquad \mbox{where}\quad 
\mathcal Z^\lambda:=|\lambda|^{-1} \lambda^*,
\end{equation}
and $\lambda^*\in \mathfrak z$ corresponds to $\lambda$ 
by duality via the scalar product.
We therefore recover with Theorem~\ref{theo:main} the results of the first two authors in \cite{FF3}.
\smallskip

  Theorem \ref{theo:main} is a consequence of Theorem~\ref{theo:main2} below. It is  based on a microlocal approach and the measures $\gamma$ that appear in the statement above are microlocal objects that  can be compared with the semi-classical measures introduced in the 90s in the Euclidean context in~\cite{HMR,gerard_X,gerardleichtnam,GMMP}. The difference here is that the semi-classical calculus we use  is based on the Harmonic analysis of the group $G$ and on the Fourier transform introduced via representation theory as presented above. This setting has been introduced in~\cite{R+F_monograph}  in a microlocal context where no specific semi-classical scale $\eps$ is specified. It uses a pseudo-differential calculus with operator-valued symbols that can be composed with the  Fourier transform of the functions (that are also operator-valued).
\smallskip 

The construction of a pseudodifferential calculus on groups is an old question from the 1980s  \cite{Taylor84,rbeals2,bealsgreiner,christ_etc} that have known recent developments with an abstract point of view from the theory of algebra of operators  in~\cite{VanErp,VeY1,VeY2}, and with a  PDEs approach in~\cite{R+F_monograph,BFG1,FF1} with applications in control theory and observability~\cite{FL}
\smallskip 

We conclude this section with some comments about Theorem~\ref{theo:main}.  It is noticeable that there is coexistence of two kinds of behaviour, with a splitting of the measure $\gamma$ corresponding to the different types of elements of $\widehat G$. 
In the context of the Heisenberg group, $\Lambda_0=\Omega_0\neq \emptyset$ and $\nabla_\lambda\zeta $ is colinear to $
\mathcal Z^\lambda$ (see \eqref{eq_mathcalZlambda})
and this is linked to the wave aspect of the sub-Laplacian in this group  pointed out in~\cite{BGX,BS,CdVHT,BFG2}. 
On other nilpotent Lie groups where $\Omega_0=\emptyset$, the other vector fields involved, 
$\nu\cdot R^\lambda$,
are more of Schr\"odinger's type.

\medskip 

\noindent{\bf Acknowledgements}. 
The authors acknowledges the support of the Leverhulme Trust via Research Project Grant 2020-037.
The first author thank Cyril Letrouit for inspiring discussions.

\section{ Noncommutative semi-classical setting }

\subsection{ Semi-classical pseudodifferential operators}
\label{subsec_pseudodiff}
We consider the set {$\mathcal A_0$} of fields of operators $\{\sigma(x,\pi)\in{\mathcal L}({\mathcal H}_\pi), \; (x,\pi)\in M\times \widehat G\}$ such that 
$$
\sigma(x,\pi)=\mathcal F \kappa_x (\pi) = \int_G \kappa_x(z) \pi(z)^* dz, 
$$
where $x\mapsto \kappa_x(\cdot)$ is in ${\mathcal C}^\infty (M,\mathcal S(G))$. 
{We call the function $\kappa_x$ the {\it convolution kernel} associated with the symbol  $\sigma$. In the spirit of the works~\cite{BFG2,R+F_monograph}}, and when $\eps\ll 1$ is a semi-classical parameter, the $\eps$-quantization of the  symbols  $\sigma\in \mathcal A_0$ is given by 
\begin{align*}
{{\rm Op}_\eps(\sigma)}f(x)& = \int_{\hat G}{\rm tr} \left( \pi(x) \sigma(x,\eps\cdot  \pi)  \widehat f(\pi) \right) d\mu(\pi), \;\; f\in\mathcal S(M),\;\; x\in M.
\end{align*}
{Here, $\eps\cdot\pi$ denotes the class in $\widehat G$ of the irreducible representation $x\mapsto\pi(\delta_\eps x)$.}
Setting 
\[
\kappa^\eps_x(z)= \eps^{-Q} \kappa_x(\delta_{\eps^{-1}} z),
\]
the $\eps$-quantization then obeys to 
\begin{equation}\label{eq:kernel}
{{\rm Op}_\eps(\sigma)}f(x)=  \int_{ G} \kappa_x^\eps(y^{-1} x) f(y)dy= \sum_{\gamma\in\Gamma} \int_{y\in M} \kappa^\eps_x(\gamma y^{-1} x) dy,  \;\; f\in\mathcal S(M),\;\; x\in M.
\end{equation}
As in the case of groups (see~\cite{FF2}), the family $\left({\rm Op}_\eps(\sigma)\right)_{\eps>0}$ is a bounded family in $\mathcal L(L^2(M))$: 

 \begin{proposition} 
There exists $ C>0$ such that for all $\sigma\in {\mathcal A}_0$ and $\eps >0$,
$$\| {\rm Op}_\eps(\sigma)\|_{{\mathcal L}(L^2(M))}\leq
  \int_G \sup_{x\in M} |\kappa_x (z)|dz.$$
\end{proposition}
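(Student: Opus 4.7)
The plan is to view $\mathrm{Op}_\eps(\sigma)$ as an integral operator on $M\times M$ and apply Schur's test. From~\eqref{eq:kernel}, the operator on $G$ has kernel $K^\eps(x,y)=\kappa^\eps_x(y^{-1}x)$, and after $\Gamma$-periodization (lifting $f\in L^2(M)$ to a left-$\Gamma$-periodic function on $G$, using the left-invariance of $dy$ and the fact that $\kappa_x$ depends only on $\dot x\in M$), the kernel on $M\times M$ becomes
\[
\widetilde K^\eps(x,y)=\sum_{\gamma\in\Gamma}\kappa^\eps_x(y^{-1}\gamma x),
\]
where $x,y$ are taken in a fundamental domain $F\subset G$ for $\Gamma\backslash G$.

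First I would carry out the Schur estimate in the $y$-variable. Since $\widetilde K^\eps(x,\cdot)$ is left-$\Gamma$-periodic, I integrate over $F$ and move the sum outside, so that the combination $\sum_{\gamma\in\Gamma}\int_F$ reassembles into an integral over $G$:
\[
\int_M\bigl|\widetilde K^\eps(x,y)\bigr|\,dy
\leq \sum_{\gamma\in\Gamma}\int_F\bigl|\kappa^\eps_x(y^{-1}\gamma x)\bigr|\,dy
=\int_G\bigl|\kappa^\eps_x(y^{-1}x)\bigr|\,dy.
\]
Then I would perform the substitution $z=y^{-1}x$ (Haar measure is bi-invariant on a nilpotent Lie group, and invariant under inversion) to get $\int_G|\kappa^\eps_x(z)|dz$, followed by $z\mapsto\delta_\eps z$ (Jacobian $\eps^Q$, which exactly cancels the prefactor $\eps^{-Q}$ in the definition of $\kappa^\eps_x$). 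This yields $\int_G|\kappa_x(z)|\,dz\leq \int_G\sup_{x'\in M}|\kappa_{x'}(z)|\,dz$, which is the required bound.

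For the Schur estimate in the $x$-variable, the step $\sum_\gamma\int_F\bigl|\kappa^\eps_x(y^{-1}\gamma^{-1}x)\bigr|\,dx$ is handled by the change of variable $x'=\gamma^{-1}x$. Since $\dot{x'}=\dot{x}$ in $M$, we have $\kappa_{x'}=\kappa_{x}$, so $\bigsqcup_{\gamma}\gamma^{-1}F=G$ lets me rewrite the sum of integrals over $F$ as a single integral over $G$. A further substitution $z=y^{-1}x'$ produces
\[
\int_M\bigl|\widetilde K^\eps(x,y)\bigr|\,dx \leq \int_G\bigl|\kappa_{\dot{yz}}(z)\bigr|\,dz \leq \int_G \sup_{x\in M}|\kappa_x(z)|\,dz.
\]
The two estimates combine through Schur's lemma to produce the claimed operator-norm bound.

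The routine is entirely classical once the integral kernel on $M\times M$ has been identified; the only mildly delicate point is bookkeeping with the $\Gamma$-periodization, namely checking that the combination of a sum over $\Gamma$ with an integral over a fundamental domain reassembles cleanly into an integral over $G$, both when the lattice element acts next to $y$ and when it acts next to $x$. The $x$-dependence of $\kappa_x$ is innocuous because we only require a pointwise-in-$z$ supremum bound, and the $\eps$-dependence disappears after a single dilation change of variables. No assumption beyond $\kappa_x\in L^1(G)$ uniformly in $x\in M$ is needed, which matches the statement.
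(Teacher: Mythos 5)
Your proof is correct and runs essentially the same argument as the paper's, differing only in packaging. The paper decomposes $\mathrm{Op}_\eps(\sigma)$ over $\Gamma$ via \eqref{eq:kernel}, bounds each $\gamma$-term by Young's convolution inequality after majorizing $|\kappa^\eps_x|$ by $\sup_{x\in M}|\kappa^\eps_x|$, and reassembles $\sum_\gamma\int_{\gamma^{-1}M}$ into $\int_G$ at the very end; you instead write down the full $M\times M$ kernel $\widetilde K^\eps(x,y)=\sum_\gamma\kappa^\eps_x(y^{-1}\gamma x)$ and apply Schur's test directly, which forces you to carry out the $\Gamma$-reassembly, the right-translation $z=y^{-1}x$, and the $\delta_\eps$-dilation twice, once for each variable. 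Since Young's inequality for a right-convolution operator on a unimodular group is exactly Schur's test with both integral estimates coinciding by invariance of Haar measure, the two routes are identical in substance: each hinges on replacing $\kappa_x$ by $\sup_{x}|\kappa_x|$ to remove the $x$-dependence, reassembling the $\Gamma$-sum with the fundamental-domain integral into a single $\int_G$, and cancelling the $\eps^{-Q}$ prefactor against the Jacobian $\eps^Q$ of $\delta_\eps$. One small slip to flag: in your $x$-variable estimate the intermediate quantity should be $\int_G|\kappa^\eps_{yz}(z)|\,dz$ rather than $\int_G|\kappa_{\dot{yz}}(z)|\,dz$; applying the same dilation change of variables as in the $y$-estimate turns this into $\int_G|\kappa_{y\delta_\eps z}(z)|\,dz\leq\int_G\sup_{x\in M}|\kappa_x(z)|\,dz$, so the conclusion is unaffected.
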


\begin{proof} By Young's convolution inequality
$$\| f*  \kappa^\eps_{x}(\gamma\cdot) \|_{L^2(M)}\leq \| \sup_{x\in M}| \kappa^\eps_{x}(\gamma\cdot) | \|_{L^1(M)} \|  f\|_{L^2(M)} ,$$
$${\rm with}\;\; \| \sup_{x\in M} | \kappa^\eps_{x}(\gamma\cdot)|  \|_{L^1(M)}= \eps^{-Q}\int_M \sup_{x\in M} |  \kappa_{x}(\eps^{-1}\cdot \gamma y) |dy =\int_{\gamma^{-1}M}\sup_{x\in M} |\kappa_x(y)|dy.$$
Therefore, using~\eqref{eq:kernel}, we deduce 
\begin{align*}
\| {{\rm Op}_\eps(\sigma)}f\|_{L^2(M)} &\leq \sum_{\gamma\in \Gamma}\| f*  \kappa^\eps_{x}(\gamma\cdot) \|_{L^2(M)}\\
&\leq \|  f\|_{L^2(M)}\sum_{\gamma\in \Gamma}\int_{\gamma^{-1}M}\sup_{x\in M} |\kappa_x(y)|dy
= \|  f\|_{L^2(M)} \int_{G}\sup_{x\in M} |\kappa_x(y)|dy.
\end{align*}
  \end{proof}
  
  Besides, this semi-classical pseudodifferential calculus {enjoys} symbolic calculus (see Proposition~3.6 in~\cite{FF2} in the case of groups and Proposition~2.2 in~\cite{FL} for the extension to nilmanifolds). 

\subsection{Semi-classical measures}
Let us first introduce our notion of operator-valued measures introduced in the earlier papers of the first two authors.
We will use the same notation as in those paper, even if it means  using the Greek letter $\Gamma$ for the trace-class operators $\Gamma(x,\pi)$.
We think that there is no possible confusion with our current notation for the co-compact discrete subgroup~$\Gamma$ of~$G$, and thus will allow this small conflict of notation

\smallskip

We consider  pairs $(\Gamma,\gamma)$ consisting in a positive Radon measure~$\gamma$ on $M\times \widehat G$ and a measurable field over $(x,\pi) \in M\times \widehat G$ of trace-class operators $\Gamma(x,\pi)$ on ${\mathcal H}_\pi$
satisfying
$$
\int_{M\times \widehat G} {\rm Tr}\left| \Gamma (x,\pi)\right| d\gamma(x,\pi)<\infty.
$$
We equip the set of such pairs with the equivalence relation $(\Gamma,\gamma)\sim (\Gamma',\gamma')$ given by the existence of a measurable function $f: M\times \widehat G\to \mathbb C$ such that 
\[
\gamma'=f\gamma\;\;\mbox{ and}\;\;\Gamma'=f^{-1} \Gamma,\;\; \gamma-a.e.
\]
We denote by ${\mathcal M}_{ov}(M\times \widehat G)$ the set of equivalence classes for this relation and by $\Gamma d\gamma$ the class of the  pair $(\Gamma, \gamma)$.
If $\Gamma \geq 0$, then we say that the operator valued measure $\Gamma d\gamma$ is positive, and we denote by 
$\mathcal M^+_{ov}(M\times \widehat G)$ the set of the positive operator-valued measures  
on $M\times \widehat G$. 
They  characterize bounded families in $L^2(M)$ according to the following theorem. 
 
\begin{proposition}[\cite{FF2,FF3}]\label{prop:scm}
 Let $(\psi^\eps)_{\eps>0}$ be a bounded family in $L^2(M)$.  
   There exist a subsequence
   $\eps_k\to 0$ as $k\to \infty$,
and an operator-valued measure
{$ \Gamma d\gamma \in  {\mathcal M}_{ov}^+(M\times \widehat G)$} satisfying
$$
\forall \sigma\in {\mathcal A}_0,\qquad   
\left({\rm Op}_{\eps_k} (\sigma) \psi^{\eps_k},\psi^{\eps_k}\right)
\underset{k\to \infty} {\longrightarrow}
\int_{ M\times \widehat G} {\rm Tr}\left(\sigma(x,\pi) \Gamma(x,\pi)\right)d\gamma(x,\pi) .
$$
\end{proposition}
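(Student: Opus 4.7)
The plan is to mimic the classical construction of semi-classical measures (as in \cite{gerard_X,GMMP}) but adapted to the operator-valued Fourier side, following \cite{FF2,FF3}. For each $\eps>0$, define the linear functional $\ell_\eps$ on $\mathcal{A}_0$ by
$$
\ell_\eps(\sigma) := \bigl({\rm Op}_\eps(\sigma)\psi^\eps,\psi^\eps\bigr)_{L^2(M)}.
$$
By Cauchy--Schwarz and the preceding $L^2$-boundedness proposition,
$$
|\ell_\eps(\sigma)| \;\leq\; \|\psi^\eps\|_{L^2(M)}^2 \int_G \sup_{x\in M}|\kappa_x(z)|\,dz \;\leq\; C\, p(\sigma),
$$
uniformly in $\eps$, where $p(\sigma)$ is the natural seminorm on the convolution kernel $\kappa_x$. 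Since $\mathcal{A}_0$ is parametrised by $\mathcal{C}^\infty(M,\mathcal{S}(G))$ which contains a countable dense family (say, of products of a basis of spherical harmonics in $x$ with Schwartz functions in $z$), a standard diagonal extraction yields a subsequence $\eps_k\to 0$ and a limit functional $\ell$ on $\mathcal{A}_0$ with $\ell_{\eps_k}(\sigma)\to \ell(\sigma)$ for every $\sigma\in\mathcal{A}_0$.

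The next step is to upgrade $\ell$ to a positive functional on the cone of symbols $\sigma$ with $\sigma(x,\pi)\geq 0$ for all $(x,\pi)$. Here one invokes a G\aa rding-type inequality adapted to the operator-valued calculus: if $\sigma\geq 0$ one writes $\sigma = \tau^*\star \tau + \eps\, r_\eps$ using a smooth operator-valued square root and the symbolic calculus of Section~\ref{subsec_pseudodiff} (see Prop.~3.6 of \cite{FF2} and Prop.~2.2 of \cite{FL}), so that
$$
\bigl({\rm Op}_\eps(\sigma)\psi^\eps,\psi^\eps\bigr)
= \|{\rm Op}_\eps(\tau)\psi^\eps\|_{L^2}^2 + O(\eps),
$$
and passing to the limit yields $\ell(\sigma)\geq 0$. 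In the same spirit, for $\sigma$ scalar-valued (i.e.\ $\sigma(x,\pi)=a(x)\,{\rm Id}_{\mathcal{H}_\pi}$ with $a\in\mathcal{C}^\infty(M)$) one recovers the usual weak limit $\ell(\sigma)=\lim_k \int_M a(x)|\psi^{\eps_k}|^2 dx$, which in particular fixes a finite positive Radon measure controlling $\ell$.

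Finally, one represents $\ell$ as integration against an element of $\mathcal{M}_{ov}^+(M\times \widehat G)$. The idea is to disintegrate: the scalar functional $a\mapsto \ell(a\,{\rm Id})$ supplies a positive Radon measure $\gamma_0$ on $M$; and for $\gamma_0$-almost every $x$, the residual positive functional on operator-valued fields over $\widehat G$ is represented, fibrewise, by a positive trace-class-valued field. Gluing these fibrewise data along $\gamma_0$ and then reshuffling the mass between scalar and operator parts gives the equivalence class $\Gamma\, d\gamma\in\mathcal{M}_{ov}^+(M\times \widehat G)$; this is exactly the construction carried out in \cite{FF2,FF3}, and the ambiguity in the scalar factor is absorbed by the equivalence relation $(\Gamma,\gamma)\sim (f^{-1}\Gamma, f\gamma)$.

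The main technical obstacle is the G\aa rding step: one needs the symbolic calculus to produce an operator-valued square root of a positive symbol with error $O(\eps)$ in the operator norm on $L^2(M)$, and this in turn rests on good control of compositions in the noncommutative, non-compactly-supported setting of $\widehat G$. Once positivity of $\ell$ is secured, the extraction and the representation as $\Gamma\,d\gamma$ are essentially soft functional-analytic arguments.
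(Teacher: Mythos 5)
The overall scheme you outline — uniform bound, diagonal extraction, positivity, representation — is indeed the one used in \cite{FF2,FF3}, to which the paper delegates this proposition, but your G\aa rding step does not go through as written. You want an operator-valued square root $\tau$ of an arbitrary positive $\sigma\in\mathcal A_0$ with $\tau$ again in $\mathcal A_0$; in the operator-valued, noncommutative Fourier setting, $\sqrt{\sigma(x,\pi)}$ is computed fibrewise by functional calculus on each $\mathcal H_\pi$, and there is no reason this operation produces a field whose inverse group Fourier transform is a Schwartz convolution kernel on $G$. The correct route avoids extracting square roots of general positive symbols: one first shows $\ell(\sigma^*\sigma)\geq 0$ for all $\sigma\in\mathcal A_0$ directly from the symbolic-calculus identity ${\rm Op}_\eps(\sigma^*\sigma)={\rm Op}_\eps(\sigma)^*{\rm Op}_\eps(\sigma)+O(\eps)$ (no square root is taken), then extends $\ell$ by continuity to the $C^*$-closure $\mathcal A$ of $\mathcal A_0$ — which requires a Calder\'on--Vaillancourt-type estimate $\|{\rm Op}_\eps(\sigma)\|_{\mathcal L(L^2(M))}\leq C\|\sigma\|_{L^\infty(M\times\widehat G)}+O(\eps)$, or boundedness at an approximate identity; the kernel-norm bound you quote is not sufficient for this — and finally observes that the positive cone of $\mathcal A$ is the norm-closure of $\{\sigma^*\sigma:\sigma\in\mathcal A_0\}$, so that positivity of $\ell$ on $\mathcal A^+$ follows by density.

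Two further slips. The symbol $\sigma(x,\pi)=a(x)\,{\rm Id}_{\mathcal H_\pi}$ is \emph{not} in $\mathcal A_0$: its convolution kernel would be $a(x)\delta_e$, which is not a Schwartz function, and more to the point the weak limit of $|\psi^{\eps_k}(x)|^2\,dx$ cannot in general be recovered from a semi-classical measure without the $\eps$-oscillation hypothesis of the subsequent proposition; so the sentence about scalar symbols ``fixing a finite positive Radon measure controlling $\ell$'' is incorrect. Finally, the ``fibrewise disintegration'' should not be presented as an ad hoc construction — it is exactly the isomorphism $\mathcal A^*\cong\mathcal M_{ov}(M\times\widehat G)$ recalled in Section~4 of the paper, which identifies positive linear functionals on $\mathcal A$ with positive operator-valued measures and is itself a nontrivial ingredient that must be invoked.
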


Continuing with the setting of the statement above, we say then that
the operator-valued measure $\Gamma d\gamma$ is a \emph{semi-classical measure} of $(\psi^\eps)_{\eps>0}$ at the scale $\eps$. 
A given family $(\psi^\eps)_{\eps>0}$ may have several semi-classical measures, depending on different subsequences $(\eps_k)_{k\in\N}$. The knowledge of all these families indicates the obstruction to strong convergence  in $L^2(M)$ of the family $(\psi^\eps)_{\eps>0}$. 

\smallskip 

The scale $\eps$ is particularly interesting for analyzing the oscillations of a family $(\psi^\eps)_{\eps>0}$ that satisfies weighted Sobolev estimates such as 
\begin{equation}\label{eps-osc}
\exists s,C>0,\;\;\forall \eps>0,\;\;  \| (-\eps^2 \mathbb L_M)^{s\over 2} \psi^\eps \|_{L^2(M)} \leq C.
\end{equation}
Indeed, one can then link the weak limits of the energy densities with the semi-classical measures:
 
\begin{proposition}[\cite{FF3}]
Assume $(\psi^{\eps})_{\eps>0}$ satisfies~\eqref{eps-osc} and that $\Gamma d\gamma$ is a semi-classical measure of $(\psi^{\eps})_{\eps>0}$ for the subsequence $(\eps_k)_{k\in\N}$.
Then  for all $\phi\in{\mathcal C}^\infty (M)$,
\begin{equation}\label{cor:eps-osc}
\limsup_{k\rightarrow +\infty}  \int_{M} \phi(x) |\psi^{\eps_k}(t,x)|^2 dx
=
{
\int_{ M\times \widehat G}
}
\phi(x)  {\rm Tr} \left(\Gamma(x,\pi) \right) d\gamma(x,\pi).
\end{equation}
\end{proposition}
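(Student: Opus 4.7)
The plan is to apply Proposition~\ref{prop:scm} to a well-chosen symbol that factorises as a product of $\phi(x)$ and a spectral cut-off of $\pi(-\mathbb L)$, and then to relax the cut-off. Fix $\chi\in C_c^\infty(\R)$ with $0\leq \chi\leq 1$ and $\chi\equiv 1$ in a neighbourhood of $0$, and for $R>0$ set $\chi_R(t):=\chi(t/R)$. Consider the symbol
\[
\sigma_R(x,\pi)\;:=\;\phi(x)\,\chi_R\bigl(\pi(-\mathbb L)\bigr),\qquad (x,\pi)\in M\times \widehat G.
\]
By a Hulanicki-type theorem for sub-Laplacians on stratified groups (see~\cite{R+F_monograph}), $\chi_R(-\mathbb L_G)$ has a Schwartz convolution kernel $k_R\in\mathcal S(G)$ whose group Fourier transform is the field $\pi\mapsto\chi_R(\pi(-\mathbb L))$. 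Hence $\sigma_R$ is associated with $\kappa_x=\phi(x)k_R\in C^\infty(M,\mathcal S(G))$, so that $\sigma_R\in \mathcal A_0$. Because $\kappa_x$ factorises cleanly between the $x$-variable and the group variable, the quantisation formula~\eqref{eq:kernel} yields the \emph{exact} identity
\[
{\rm Op}_\eps(\sigma_R)\;=\;\phi\cdot\chi_R(-\eps^2\mathbb L_M),
\]
where the $\eps^2$ rescaling reflects the homogeneity $(\eps\cdot\pi)(\mathbb L)=\eps^2\pi(\mathbb L)$.

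Feeding $\sigma_R$ into Proposition~\ref{prop:scm} along the subsequence $(\eps_k)$ gives
\[
\bigl\langle\phi\,\chi_R(-\eps_k^2\mathbb L_M)\psi^{\eps_k},\psi^{\eps_k}\bigr\rangle
\;\Tend{k}{\infty}\;
\int_{M\times\widehat G}\phi(x)\,{\rm Tr}\bigl(\chi_R(\pi(-\mathbb L))\,\Gamma(x,\pi)\bigr)\,d\gamma(x,\pi).
\]
Writing $1=\chi_R(-\eps_k^2\mathbb L_M)+(1-\chi_R)(-\eps_k^2\mathbb L_M)$ by functional calculus and inserting this identity between $\phi$ and $\psi^{\eps_k}$ in $\int_M \phi|\psi^{\eps_k}|^2\,dx=\langle\phi\,\psi^{\eps_k},\psi^{\eps_k}\rangle$ produces the exact splitting
\[
\int_M\phi\,|\psi^{\eps_k}|^2\,dx
\;=\;\bigl\langle\phi\,\chi_R(-\eps_k^2\mathbb L_M)\psi^{\eps_k},\psi^{\eps_k}\bigr\rangle
+\bigl\langle\phi\,(1-\chi_R)(-\eps_k^2\mathbb L_M)\psi^{\eps_k},\psi^{\eps_k}\bigr\rangle.
\]

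It remains to let $R\to\infty$ in both sides. Since $1-\chi_R$ vanishes in a neighbourhood of $0$ of size proportional to $R$, the spectral theorem for the positive operator $-\eps^2\mathbb L_M$ together with hypothesis~\eqref{eps-osc} yields
\[
\bigl\|(1-\chi_R)(-\eps^2\mathbb L_M)\psi^\eps\bigr\|_{L^2(M)}
\;\leq\; C\,R^{-s/2}\bigl\|(-\eps^2\mathbb L_M)^{s/2}\psi^\eps\bigr\|_{L^2(M)}
\;\leq\; C'\,R^{-s/2}
\]
uniformly in $\eps$, so the last summand above is $O(R^{-s/2})$ uniformly in $k$. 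On the spectral side, $0\leq\chi_R\leq 1$ and $\chi_R\nearrow 1$ pointwise on $[0,\infty)$, and ${\rm Tr}|\Gamma(x,\pi)|$ is $\gamma$-integrable, so dominated convergence gives
\[
\int_{M\times\widehat G}\phi(x)\,{\rm Tr}\bigl(\chi_R(\pi(-\mathbb L))\,\Gamma(x,\pi)\bigr)\,d\gamma
\;\Tend{R}{\infty}\;
\int_{M\times\widehat G}\phi(x)\,{\rm Tr}(\Gamma(x,\pi))\,d\gamma.
\]
Combining these — first $k\to\infty$, then $R\to\infty$ — shows that the $\limsup$ in~\eqref{cor:eps-osc} is in fact an honest limit equal to the claimed right-hand side. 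The only delicate ingredient is the membership $\sigma_R\in\mathcal A_0$, which rests on the Schwartz decay of the kernel of the spectral cut-off $\chi_R(-\mathbb L_G)$; the remainder of the argument is functional calculus together with the elementary dominated-convergence step.
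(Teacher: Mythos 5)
Your argument is correct, and it is essentially the standard frequency-cutoff proof that the paper imports from~\cite{FF3}. The structure — take $\sigma_R(x,\pi)=\phi(x)\,\chi_R(\pi(-\mathbb L))$, note via Hulanicki that this lies in $\mathcal A_0$, observe the exact factorisation ${\rm Op}_\eps(\sigma_R)=\phi\cdot\chi_R(-\eps^2\mathbb L_M)$ from the homogeneity $(\eps\cdot\pi)(\mathbb L)=\eps^2\pi(\mathbb L)$, split $\langle\phi\psi^{\eps_k},\psi^{\eps_k}\rangle$ by inserting $1=\chi_R+(1-\chi_R)$, bound the tail uniformly in $k$ by $R^{-s/2}$ using the spectral theorem and~\eqref{eps-osc}, and finally send $R\to\infty$ on the measure side by dominated convergence — is exactly the argument one expects here. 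Two harmless cosmetic points: first, you write $\chi_R\nearrow 1$, but the family is not actually monotone for a generic $C_c^\infty$ bump $\chi$; this is irrelevant since you invoke dominated (not monotone) convergence with the dominating function ${\rm Tr}|\Gamma|$, which is integrable against $\gamma$ by construction. Second, your proof shows the $\limsup$ in~\eqref{cor:eps-osc} is a genuine limit along the chosen subsequence, which is indeed the intended reading of the statement (the $\limsup$ notation, and the stray $t$ in $\psi^{\eps_k}(t,x)$, are artifacts inherited from a time-dependent version of the result in~\cite{FF3}).
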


\subsection{ Application to quantum limits}

Let us now come back to the sequence \eqref{eq:eigen} of eigenfunctions $(\psi_k^{\mathbb U})_{k\in\N}$ of the  sub-Laplacian  operator $-\mathbb L_M^{\mathbb U}=-\mathbb L_M+\mathbb U(x)$ for a compact nilmanifod $M=\Gamma\backslash G$ whose underlying group $G$ is step two. Denoting by $E_k^{\mathbb U}$ the associated sequence of eignevalue; we set  
\[ \eps_k=(E_k^{\mathbb U})^{-1/2}\]
we obtain a semi-classical scale such that the sequence $(\psi_k^{\mathbb U})_{k\in\N}$ is $\eps_k$-oscillating. Thus any weak limit~$\varrho$ of the energy density $|\psi_k^{\mathbb U}(x)|^2 dx$ is the marginal of a semi-classical measure $\Gamma d\gamma$ of the family $(\psi_k^{\mathbb U})_{k\in\N}$ according to~\eqref{cor:eps-osc}. Therefore, the properties of the semi-classical measures of the sequence $(\psi_k^{\mathbb U})_{k\in\N}$ will reflect on any weak limit of the energy density.

\smallskip

We now omit the index $k\in\N$ and  focus on the  semi-classical measures of a family of normalized functions $(\psi^\eps)_{\eps>0}$ that satisfy 
\begin{equation}\label{pb}
-\eps^2\mathbb{L}_M^\mathbb{U} \psi^\eps=\psi^\eps,
\end{equation} 
where $\mathbb U \in \mathcal C^\infty(M)$ is a potential on $M$.

As $G$ is a nilpotent Lie group, 
the elements of $\mathcal M^+_{ov} (M\times \widehat G)$ split into two parts 
$$
\Gamma d\gamma
\ =\ {\bf 1}_{M\times \widehat G_1} \Gamma d\gamma \ + \ {\bf 1}_{M\times \widehat G_\infty}\Gamma d\gamma 
$$
In particular, on $M\times \widehat G_1$, we may assume  $\Gamma=1$, while on $M\times \widehat G_\infty$, the  trace-class  operator $\Gamma(x,\pi^{\lambda,\nu})$ acts  on $\mathcal H_{\pi^{\lambda,\nu}}= L^2(\mathfrak p_\lambda)$ in the case of a step-two group $G$.
\smallskip

We can already  observe that the decomposition~\eqref{decomposition} in Theorem \ref{theo:main} is due to the split above:  
 the measure $\varrho^{\mathfrak v}$ is the restriction of $\Gamma d\gamma$ to $M\times \widehat G_1$, 
 while the restriction to $M\times \widehat G_\infty$ yields a more involved measure $\varrho^{\mathfrak z}$. {The invariance then comes from} the theorem below. In this statement,
we will allow ourselves to use  the identifications (see Sections \ref{subsubsec_Ghat}
 and \ref{subsubsec_Omegak}):
$$
  \widehat G_1\sim \mathfrak v^*
\qquad\mbox{and}
\qquad
\widehat G_\infty \sim \sqcup_{k=k_0}^{\dim \mathfrak v-1}
\Omega_{k}.
$$

  \begin{theorem}\label{theo:main2}
  Let $(\psi^\eps)_{\eps>0}$ be a  family of normalized functions satisfying~\eqref{pb} and $\Gamma d\gamma$ one of its semi-classical measures. Then we have the following properties:
\begin{itemize}
\item[(i)]  Localization: 
\[ 
\pi(\mathbb L) \Gamma(x,\pi) =  \Gamma(x,\pi)  \pi(\mathbb L)= -\Gamma(x,\pi),\qquad
\gamma(x,\pi)\, a. e.
\]
which implies 
\begin{enumerate}
\item The scalar measure ${\bf 1}_{M\times \widehat G_1}\gamma  $ on $M\times \widehat G_1$ is supported in 
$\{(x,\pi^{\omega})\in M\times \widehat G_1 , |\omega|=1\}$.
\item 
 Setting
$\Gamma_{\zeta}:={\bf 1}_{M\times \widehat G_\infty} \widehat \bP_{\zeta}\Gamma$ for each $\zeta>0$, we have 
$\Gamma(x,\pi) = \sum_{\zeta\in {\rm sp} (\pi(-\mathbb L))}
\Gamma_{\zeta}(x,\pi)
$ for $\gamma$-almost every $(x,\pi)\in M\times \widehat G_\infty$.
Moreover, it satisfies $\zeta \Gamma_{\zeta}d\gamma  =  \Gamma_{\zeta}d\gamma$ in $\mathcal M_{ov}^+(M\times \widehat G)$. In other words, 
 $\zeta =1$ on the support of the measure ${\rm Tr}(\Gamma_\zeta(x,\pi)) \gamma(x,\pi)$.
 \end{enumerate}
\item[(ii)] {Invariance}:
\begin{enumerate}
\item The scalar measure ${\bf 1}_{M\times \widehat G_1}\gamma  $
is invariant under the flow 
$$
(x,\pi^\omega)\longmapsto ({\rm Exp}(s\omega\cdot V) x,\pi^\omega), 
\quad s\in\R .
$$
\item 
\begin{enumerate}
\item For each $\zeta>0$, 
the operator valued measure $\Gamma_{\zeta}d\gamma={\bf 1}_{M\times \widehat G_\infty} \widehat \bP_{\zeta}\Gamma d\gamma$  
is supported in $M\times \widehat G_{\infty}$  
where it
 is
invariant under  the flow
$$
(x,\pi^{\lambda,\nu})\longmapsto ({\rm Exp}(s\nu\cdot R^\lambda) x,\pi^{\lambda,\nu}), 
\quad s\in\R .
$$
\item Assume $\Omega_{0}\neq \emptyset$. 
For each $\zeta>0$ {parametrized smoothly by $\lambda$}, 
the  operator valued measure ${\bf 1}_{M\times \Lambda_{0}}\Gamma_{\zeta}  d\gamma $  is supported on $M\times \Lambda_0$  where it is invariant under  the  flow
{
$$
(x,\pi^{\lambda})\longmapsto 
({\rm Exp}(s \nabla_\lambda \zeta ) x,\pi^\lambda), 
\quad s\in\R.
$$
}
\end{enumerate}
\end{enumerate}
\end{itemize}
\end{theorem}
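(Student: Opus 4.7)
The strategy is to test the eigenequation $-\eps^2\mathbb L_M^{\mathbb U}\psi^\eps=\psi^\eps$ against quantizations $\Op_\eps(\sigma)$, $\sigma\in\mathcal A_0$, and pass to the limit along the subsequence defining $\Gamma d\gamma$. The key observation is that the degree-$2$ homogeneity of $\mathbb L$ under the dilations $\delta_\eps$ gives $-\eps^2\mathbb L_M=\Op_\eps(\pi(-\mathbb L))$ with an $x$-independent symbol, while $\eps^2\mathbb U$ is $O(\eps^2)$ in operator norm. The symbolic calculus then yields
\[
\Op_\eps(\sigma)\bigl(-\eps^2\mathbb L_M^{\mathbb U}\bigr)=\Op_\eps\bigl(\sigma\cdot\pi(-\mathbb L)\bigr)+O_{L^2}(\eps),
\]
and analogously on the right. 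Pairing with $\psi^\eps$, using the eigenequation, and passing to the limit produces $\pi(-\mathbb L)\Gamma=\Gamma\pi(-\mathbb L)=\Gamma$ for $\gamma$-a.e.\ $(x,\pi)$, which is the localization in (i). On $\widehat G_1$ this reads $|\omega|^2=1$, which is (i)(1); on $\widehat G_\infty$, the spectral decomposition $\pi(-\mathbb L)=\sum_\zeta \zeta\,\widehat\bP_\zeta$ gives $\Gamma=\sum_\zeta\Gamma_\zeta$ concentrated on $\{\zeta=1\}$, which is (i)(2).

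For the invariance in (ii), the eigenequation yields
\[
0=\bigl\langle[\Op_\eps(\sigma),-\eps^2\mathbb L_M^{\mathbb U}]\psi^\eps,\psi^\eps\bigr\rangle,\qquad \sigma\in\mathcal A_0.
\]
Symbolic calculus expands the commutator at leading order as $\Op_\eps([\sigma,\pi(-\mathbb L)])+\eps\,\Op_\eps(\mathcal T\sigma)+O(\eps^2)$, with $\mathcal T$ a first-order transport operator determined by the structure constants of $\mathfrak g$, combining differentiation of $\sigma$ in $x$ with derivations on the Fourier side. The localization step permits us to pick $\sigma$ so that $[\sigma,\pi(-\mathbb L)]=0$ (typically a cutoff $a(x)$ times spectral projectors of $\pi(-\mathbb L)$), so that only the $\eps$-order transport survives; dividing by $\eps$ and passing to the limit then produces the desired flow invariance. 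In (ii)(1) the representations $\pi^\omega$ are characters, so all Fourier-side commutators vanish and $\mathcal T$ reduces to classical differentiation along $\omega\cdot V$. In (ii)(2)(a) one exploits $\pi^{\lambda,\nu}(R_j^\lambda)=i\nu_j$: the vector $\nu\cdot R^\lambda\in\mathfrak r_\lambda$ acts as a scalar on $\mathcal H_{\pi^{\lambda,\nu}}$ and hence commutes with every spectral projector, so applying the commutator identity with the vector field $\eps(\nu\cdot R^\lambda)$ isolates transport by $a\mapsto(\nu\cdot R^\lambda)a$ along $\Exp(s\,\nu\cdot R^\lambda)x$. In (ii)(2)(b) the generator $\nabla_\lambda\zeta(\alpha,\lambda)\in\mathfrak z$ has degree $2$, so the relevant scaling is $\eps^2$; the transport law comes from differentiating $\pi^\lambda(-\mathbb L)=H(\lambda)+|\nu|^2$ on each Hermite eigenspace indexed by $\alpha$, the gradient $\nabla_\lambda\zeta(\alpha,\lambda)$ then emerging from the chain rule.

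The main obstacle is the smoothness in $\lambda$ required by (ii)(2)(b): to differentiate $\pi^\lambda(-\mathbb L)$ on each eigenspace, one needs both the eigenvalues $\zeta(\alpha,\lambda)$ and the associated spectral projectors $\widehat\bP_{\zeta(\alpha,\lambda)}$ to depend smoothly on $\lambda$. This is precisely why the statement is restricted to $\Lambda_0\subset\Omega_{k_0}$, where the eigenvalues of $B(\lambda)^2$ have locally constant multiplicity; density of $\Lambda_0$ in $\Omega_{k_0}$ then extends the invariance to the full support of $\Gamma_\zeta d\gamma$. A secondary issue is producing enough symbols in $\mathcal A_0$, built from spectral data of $\pi(-\mathbb L)$ and from Fourier multipliers in the $\pi(R_j^\lambda)$, to resolve all marginals of the operator-valued measure; this is handled by approximation arguments inside the pseudodifferential calculus.
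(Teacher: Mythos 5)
Your outline for the localization part (i) and for the $\widehat G_1$ invariance (ii)(1) matches the paper: test the eigenequation against $\Op_\eps(\sigma)$, use the expansion of $[\Op_\eps(\sigma),-\eps^2\mathbb L_M^{\mathbb U}]$ from the symbolic calculus, restrict to $\sigma$ in the commutant $\mathcal B_0$ of $\widehat{\mathbb L}$ so the $\eps^{-1}$ term drops, and pass to the limit. The identification of the transport term with $\omega\cdot V$ on the characters is also correct. However, the proposal has genuine gaps in (ii)(2)(a) and (ii)(2)(b).

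For (ii)(2)(a): after localizing to $\sigma\in\mathcal B_0$, the surviving $\eps$-order term in the commutator expansion is $2\,\Op_\eps(\pi(V)\cdot V\sigma)$ where $V$ runs over an orthonormal basis of \emph{all} of $\mathfrak v$, not just $\mathfrak r_\lambda$. Inserting the spectral resolution $\sum_\zeta\pi(\bP_\zeta)$ and using that $\sigma$ and $\Gamma$ commute with $\pi(\bP_\zeta)$ reduces the integrand to $\pi(\bP_\zeta)(\pi(V)\cdot V)\pi(\bP_\zeta)$. You assert that because $\pi^{\lambda,\nu}(\nu\cdot R^\lambda)$ is scalar it commutes with projectors, which is true, but that observation alone does not eliminate the $P_j^\lambda,Q_j^\lambda$ contributions. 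The missing ingredient is the identity $\pi(\bP_\zeta)\pi(P_j^\lambda)\pi(\bP_\zeta)=0=\pi(\bP_\zeta)\pi(Q_j^\lambda)\pi(\bP_\zeta)$, which follows because $\pi(W_j^\lambda),\pi(\overline W_j^\lambda)$ are lowering and raising operators for the harmonic oscillator $H(\lambda)$ and therefore shift eigenspaces. Without this, there is no reason the transport should reduce to the $\mathfrak r_\lambda$ direction.

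For (ii)(2)(b): on $\Omega_0$ the surviving $\eps$-order transport $\nu\cdot R^\lambda$ is identically zero, so the first-order relation carries no information. You correctly observe that the generator lives in $\mathfrak z$ (degree two) and that the eigenvalues $\zeta(\alpha,\lambda)$ vary smoothly on $\Lambda_0$, but you do not explain how a genuine $\eps^2$-order relation is obtained from the eigenequation. The mechanism in the paper is an iteration: one exhibits a symbol $T\sigma\in\mathcal A_0$ (built from $B(\lambda)^{-1}V\cdot\pi(V)$, which requires $\lambda\in\Omega_0$) with $\pi(V)\cdot V\sigma=[T\sigma,\pi(-\mathbb L)]$, so that $\tfrac1\eps\langle\Op_\eps(\pi(V)\cdot V\sigma)\psi^\eps,\psi^\eps\rangle$ is itself expressible through $\tfrac1\eps\langle[\Op_\eps(T\sigma),-\eps^2\mathbb L_M^{\mathbb U}]\psi^\eps,\psi^\eps\rangle$, which vanishes by the eigenequation, plus the next-order term $\Op_\eps((\pi(V)\cdot V)\circ T\sigma)$. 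The spectral projection of that iterated symbol is then computed explicitly, and the identification $[P_j^\lambda,Q_j^\lambda]=\nabla_\lambda\eta_j(\lambda)$ (which requires the smooth diagonalization on $\Lambda_0$) produces $\nabla_\lambda\zeta$. Your remark about ``differentiating $\pi^\lambda(-\mathbb L)$ on each Hermite eigenspace'' captures the final heuristic but replaces this chain of commutator manipulations and the computation of $\pi(\bP_\zeta)(V\cdot\pi(V))\circ T\,\pi(\bP_\zeta)$ with a hand-wave; as stated the argument would not yield a well-defined invariance relation for $\Gamma_\zeta d\gamma$.
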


Note that the flow invariances may be different for various $\zeta$ in Part (2) (b). This was already observed on the groups of Heisenberg type where  $\Omega_0=\Lambda_0=\mathfrak z^*\setminus \{0\} \sim \widehat G_\infty$ (see~\cite{FF3,FL}).
The invariance of Point (2)(a) is empty in that case since the flow map of (2)(a) reduces to identity on $\Omega_0$.
\smallskip

 Theorem~\ref{theo:main2}   implies Theorem~\ref{theo:main} through the identification that has been mentioned above:
 $$
 \varrho^{\mathfrak v} (x)= \int_{\omega\in \mathfrak v^*} d\gamma(x,\pi^\omega)
 \qquad\mbox{and}\qquad
 \varrho^{\mathfrak z} (x)=  \int_{\pi\in \widehat G_\infty}
 {\rm Tr}(\Gamma (x,\pi) ) d\gamma(x,\pi). 
 $$

\subsection{Main ideas of the proof}

 Theorem~\ref{theo:main2} is inspired by the results~\cite{FF2,FL} where the group~$G$ was assumed to be of Heisenberg type.
 We follow here the ideas developed in these papers and extend them to general  two-step groups.
 We explain below the main elements of the proof that rely on technical lemmata that are discussed in Section~\ref{sec:tech}. 
 
One can notice that, formally, 
 \begin{equation}\label{eq:LU}
 -\eps^2\mathbb{L}_M^\mathbb{U}= -{\rm Op}_\eps (\pi(\mathbb L))+\eps^2{\rm Op}_\eps(\mathbb U),
 \end{equation}
 which implies that the term involving the potential~$\mathbb U$ is of lower order than the operator $\eps^2\mathbb L_M$ itself.
{For both the proof of the localisation results and the invariance ones, we start from some relations coming from the $(\psi^\eps)_{\eps>0}$ being eigenfunctions of the subLaplacian.}  We then use symbolic calculus as developed in~\cite{FF2,FL}
 {to analyse  these algebraic relations and compute precisely the symbols involved in the calculus. Finally, passing to the limit $\eps\rightarrow 0$, we investigate what the resulting equations mean for the semi-classical measure. We restrict ourselves to the zone  $\widehat G_1$ or $\widehat G_\infty$ by using symbol belonging to the von Neumann algebra generated by $\mathcal A_0$. Another important ingredient of the proof consists in analyzing the different behavior of symbols that commute with $\widehat{\mathbb L}$ and those who don't. These technical points are developed in Section~\ref{sec:tech}.    }
 \medskip
 
(i) {\it Localization}. Let $\sigma\in\mathcal A_0$. By the definition of the family $(\psi^\eps)_{\eps>0}$, we have (by equation \eqref{pb})
\[
\left({\rm Op}_\eps(\sigma) (-\eps^2 \mathbb{L}_M^\mathbb{U}\psi^\eps),\psi^\eps\right)_{L^2(M)} = 
\left({\rm Op}_\eps(\sigma) \psi^\eps,-\eps^2 \mathbb{L}_M^\mathbb{U}\psi^\eps\right)_{L^2(M)}=  
\left({\rm Op}_\eps(\sigma) \psi^\eps,\psi^\eps\right)_{L^2(M)}.
\]
{By passing to the limit and using~\eqref{eq:LU}, the definition of the semi-classical measures as in Proposition~\ref{prop:scm} and } the properties of the calculus \cite{FF2,FL},
give that any semi-classical measure $\Gamma d\gamma$ of $(\psi^\eps)_{\eps>0}$ satisfies
\begin{align}
\label{eq_pfi}
\int_{M\times \widehat G}
{\rm Tr} \left(\sigma(x,\pi) \pi(\mathbb L)\Gamma (x,\pi) \right) d\gamma(x,\pi)
&=
\int_{M\times \widehat G}
{\rm Tr} \left(\pi(\mathbb L)\sigma(x,\pi) \Gamma (x,\pi) \right) d\gamma(x,\pi)
\\
&=-
\int_{M\times \widehat G}
{\rm Tr} \left(\sigma(x,\pi) \Gamma (x,\pi) \right) d\gamma(x,\pi).
\nonumber
\end{align}
This readily implies the first localization property in (i).
The rest  of (i) follows as we can now apply \eqref{eq_pfi}
not only to symbols $\sigma$ in $\mathcal A_0$, 
but also in the von Neumann algebra generated by $\mathcal A_0$, in particular to ${\bf 1}_{M\times \widehat G_1}\sigma$ and to ${\bf 1}_{M\times \widehat G_\infty}\sigma$, see Lemma \ref{lem_vNstuff}.
Furthermore, \eqref{eq_pfi} shows the commutation of $\Gamma $  with $\widehat {\mathbb L}$ so also with the {spectral projectors} $\widehat {\mathbb P}_\zeta$ for $\zeta>0$.
Therefore, with the notation of Section \ref{subsec_B}, 
our classical measure $\Gamma d\gamma$ is in $\mathcal M_{ov}(M\times \widehat G)^{(\widehat {\mathbb L})}$, {the subspace of semi-classical measures that commute with $\widehat {\mathbb L}$}. 
Hence, by the analysis in Section \ref{subsec_B}, we only need to consider symbols $\sigma$ 
in $\mathcal B_{0}$ which is the space of the symbols in $\mathcal A_{0}$ that commutes with $\widehat{ \mathbb L}.$

\medskip

(ii) {\it Invariance}. We now take advantage of the fact that for all $\sigma\in\mathcal A_0$,
\begin{equation}\label{zorro19}
\left([{\rm Op}_\eps(\sigma) ,- \eps^2\mathbb{L}_M^\mathbb{U}]\psi^\eps,\psi^\eps\right)_{L^2(M)} = 0.
\end{equation}
Setting $\pi(V)\cdot V:= \sum_{j=1}^q\pi(V_j)V_j$ for any orthonormal basis of $V_1,\ldots, V_q$ of $\mathfrak v$, a computation gives
for
$\sigma\in\mathcal A_0$,
\begin{align}\label{A2}
\frac 1\eps [{\rm Op}_\eps(\sigma) ,- \eps^2\mathbb{L}_M^\mathbb{U} ]= 
-\frac 1\eps {\rm Op}_\eps([\sigma, \pi({ \mathbb L} )])
+
2\, {\rm Op}_\eps(\pi(V)\cdot V\sigma) + \eps\, {\rm Op}_\eps(\mathbb L \sigma)+ \eps [{\rm Op}_\eps(\sigma) ,\mathbb U(x) ].
\end{align}
For symbols $\sigma \in \mathcal B_{0}$ {(which then commute with $\widehat{\mathbb L}$)}, 
 the term in $\frac 1\eps$ in the right-hand side vanishes and 
we deduce by passing to the limit that  any semi-classical measure $\Gamma d\gamma$ of $(\psi^\eps)_{\eps>0}$ satisfies  
\begin{equation}\label{A}
\forall \sigma\in\mathcal B_0,\qquad
 \int_{M\times \widehat G}{\rm Tr} \left( \pi(V)\cdot  V \sigma(x,\pi) \Gamma(x,\pi) \right) d\gamma(x,\pi) =0.
\end{equation}

{Let us prove Part (2)(a). As for Part (1),}
we can apply this to the elements 
${\bf 1}_{M\times \widehat G_1}\sigma$ and to ${\bf 1}_{M\times \widehat G_\infty}\sigma$
of the von Neumann algebra generated by $\mathcal B_0$, see Lemma \ref{lem_vNstuff}.
 We obtain first that \eqref{A} holds with integration over $M\times \widehat G_1$;
Part (ii)(1) then follows from this and
 Corollary \ref{cor_prop_Bdual}.
Then, we obtain that \eqref{A} holds with integration  on $M\times \widehat G_\infty$. 
This yields 
\begin{align}
\label{Ainfty}
0&= \int_{M\times \widehat G_\infty}{\rm Tr} \left( \pi(V)\cdot  V \sigma(x,\pi) \Gamma(x,\pi) \right)  d\gamma(x,\pi) 
\\& =
  \int_{M\times \widehat G_\infty}\sum_{\zeta\in {\rm sp}(\pi(\mathbb L) }
  {\rm Tr} \left( \pi(\bP_{\zeta})(\pi(V)\cdot  V) \pi(\bP_{\zeta})\ \sigma(x,\pi)  \Gamma(x,\pi) \right)  d\gamma(x,\pi) ,\nonumber
 \end{align}
since  $\sum_{\zeta\in {\rm sp}(\pi(\mathbb L) }\pi(\bP_{\zeta}) $ is the identity operator on $\mathcal H_{\pi}$ and $\pi(\bP_{\zeta}) = \pi(\bP_{\zeta})^{2}$ commutes with $\sigma(x,\pi)$ and $\Gamma(x,\pi)$.
Furthermore, for $\pi=\pi^{\lambda,\nu}$, 
it follows from Section~\ref{sec:tech} {(see~\eqref{non_com_PQ})}
$$
\forall \zeta>0,\qquad 
\pi(\bP_{\zeta})(\pi(P^\lambda) \cdot P^\lambda)\pi(\bP_{\zeta}) =0,
\qquad
\pi(\bP_{\zeta})(\pi(Q^\lambda) \cdot Q^\lambda) \pi(\bP_{\zeta})=0.
$$
Hence \eqref{Ainfty} becomes 
\[
\forall \sigma\in\mathcal B_0,\qquad
\int_{M\times \widehat G_{\infty}} {\rm Tr} \left( \nu\cdot R^\lambda \sigma(x,\pi^{\lambda,\nu}) \Gamma(x,\pi^{\lambda,\nu}) \right)  d\gamma(x,\pi^{\lambda,\nu}) 
=0.
\]
This  implies Part  (ii)(2)(a) by Proposition \ref{prop_Bdual}
as $\Gamma d\gamma $ { is in the set  $\mathcal M_{ov}(M\times \widehat G)^{(\widehat {\mathbb L})}$ of operator-valued measures that commute with $\widehat{\mathbb L}$}.

\smallskip 

{Let us prove Part (2)(b). We now assume $\Omega_{0}\neq\emptyset$. Indeed,} on $\Omega_{0}$,  the  analysis above  does not yield anything  since $\nu\cdot R^{\lambda}=0$  on $\Omega_{0}$.
We will need the following observation:
\begin{lemma}
\label{lem_sigmaeta}
	If $\sigma\in \mathcal A_0$ and $\eta\in \mathcal S(\mathfrak z^*)$, 
	then 
	the symbol $\sigma \eta$ given by $(\sigma\eta)(x,\pi^{\lambda,\nu})=\sigma(x,\pi^{\lambda,\nu}) \eta(\lambda)$ is in $\mathcal A_0$.
	If $\sigma\in \mathcal B_0$ then $\sigma \eta\in \mathcal B_0$. 
\end{lemma}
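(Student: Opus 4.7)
The plan is to construct, for any $\sigma \in \mathcal A_0$ with convolution kernel $\kappa_x$, a new kernel $\tilde\kappa_x \in \mathcal S(G)$ whose group Fourier transform at $\pi^{\lambda,\nu}$ equals $\sigma(x,\pi^{\lambda,\nu})\eta(\lambda)$. Since $\eta$ depends only on the central parameter $\lambda \in \mathfrak z^*$, and $\lambda$ is read off from the central character of $\pi^{\lambda,\nu}$, the natural candidate is obtained by convolving $\kappa_x$ in the central direction of $G$. Concretely, let $\theta \in \mathcal S(\mathfrak z)$ be the inverse Euclidean Fourier transform of $\eta$, i.e.\ $\int_{\mathfrak z} \theta(Z) e^{-i\lambda(Z)}\, dZ = \eta(\lambda)$ for every $\lambda \in \mathfrak z^*$, and define
$$
\tilde\kappa_x(y) := \int_{\mathfrak z} \theta(Z)\, \kappa_x\bigl(\exp_G(-Z) \cdot y\bigr)\, dZ, \qquad x \in M,\ y \in G.
$$

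The main computation is to verify $\mathcal F \tilde\kappa_x(\pi^{\lambda,\nu}) = \eta(\lambda)\sigma(x,\pi^{\lambda,\nu})$. Since $\exp_G(-Z)$ lies in the center of $G$, for any $y \in G$ one has
$$
\pi^{\lambda,\nu}\bigl(\exp_G(-Z)\cdot y\bigr)^{*} = \pi^{\lambda,\nu}(y)^{*}\,\pi^{\lambda,\nu}(\exp_G(-Z))^{*} = e^{i\lambda(Z)}\,\pi^{\lambda,\nu}(y)^{*}.
$$
Using Fubini (justified by the Schwartz decay of $\theta$ and of $\kappa_x$) and the substitution $w = \exp_G(-Z) y$, which preserves Haar measure, one obtains
\begin{align*}
\mathcal F \tilde\kappa_x(\pi^{\lambda,\nu})
&= \int_{\mathfrak z} \theta(Z)\, e^{-i\lambda(Z)} \int_G \kappa_x(w)\, \pi^{\lambda,\nu}(w)^{*}\, dw\, dZ \\
&= \eta(\lambda)\, \sigma(x,\pi^{\lambda,\nu}).
\end{align*}
The same computation on characters, using that $\pi^\omega$ is trivial on the center, yields $\mathcal F \tilde\kappa_x(\pi^\omega) = \eta(0)\sigma(x,\pi^\omega)$, consistent with the Kirillov identification $\pi^\omega \leftrightarrow \lambda = 0$.

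It remains to check that $x \mapsto \tilde\kappa_x$ lies in $C^\infty(M,\mathcal S(G))$. In the exponential coordinates $y = \exp_G(V+Z')$ with $V \in \mathfrak v$ and $Z' \in \mathfrak z$, the defining integral is simply a Euclidean convolution in the $Z'$-variable:
$$
\tilde\kappa_x(\exp_G(V+Z')) = \int_{\mathfrak z} \theta(Z)\,\kappa_x(\exp_G(V+Z'-Z))\, dZ.
$$
Since convolution with a Schwartz function preserves $\mathcal S(\R^n)$ in each variable, and differentiation in $x \in M$ commutes with the $Z$-integral, the map $x\mapsto \tilde\kappa_x$ inherits the $C^\infty(M,\mathcal S(G))$ regularity of $x\mapsto \kappa_x$. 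This establishes $\sigma\eta \in \mathcal A_0$.

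The second assertion is essentially free: if $\sigma \in \mathcal B_0$, then $\sigma(x,\pi)$ commutes with $\pi(\mathbb L)$ at every $\pi \in \widehat G$, and since $\eta(\lambda)$ acts as a scalar on $\mathcal H_\pi$, the product $\sigma(x,\pi)\eta(\lambda)$ still commutes with $\pi(\mathbb L)$. The only mildly technical step of the whole argument is the central-character computation above; once the centrality of $\exp_G(-Z)$ is exploited, everything else is a routine Fubini-plus-change-of-variable exercise, and no subtle issue arises.
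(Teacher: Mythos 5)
Your proof is correct and follows essentially the same route as the paper: the paper's one-line proof declares the kernel of $\sigma\eta$ to be $(y_{\mathfrak v},y_{\mathfrak z})\mapsto (\kappa_{x}(y_{\mathfrak v},\cdot)*_{\mathfrak z}\mathcal F_{\mathfrak z}^{-1}\eta)(y_{\mathfrak z})$, which (since $Z\in\mathfrak z$ is central, so $\exp_G(-Z)\exp_G(V+Z')=\exp_G(V+Z'-Z)$) is exactly your $\tilde\kappa_x$. You have simply spelled out the ``we check readily'' step via the central-character computation and the Haar-measure change of variables, and both accounts handle the $\mathcal B_0$ case by the same scalar-commutation observation.
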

\begin{proof}
	If  $\kappa_{x}(y)$ is the kernel of $\sigma$, then we check readily that $(y_{\mathfrak v},y_{\mathfrak z})\mapsto (\kappa_{x}(y_{\mathfrak v},\cdot)*_{\mathfrak z} \mathcal F_{\mathfrak z}^{-1} \eta) (y_\mathfrak z)$ is the kernel of $\sigma \eta$.
	The rest follows.
\end{proof}

By Lemma \ref{lem_sigmaeta}, 
if $\sigma_1\in \mathcal B_0$ and 
if  $\eta\in \mathcal S(\mathfrak z^*)$ is supported in the dense open subset $\Omega_{0}$ of $\mathfrak z^*\setminus\{0\}$, then $\sigma:=\sigma_{1} \eta$ is supported in $M\times \Omega_{0}$. 
Moreover, by Lemma~\ref{lem:computation}, there exists a symbol $T\sigma \in\mathcal A_0$ such that 
\[
\pi(V)\cdot V \sigma = [ T\sigma,\pi(-\mathbb L)].
\]
Therefore, using the additional fact 
\[
[{\rm Op}_\eps(T\sigma), \mathbb U(x)]= O(\eps) \;\;\mbox{in}\;\;\mathcal L(L^2(M)),
\] 
{
the equation \eqref{A2} gives}
\begin{align*}
&\frac 1\eps \left( {\rm Op}_\eps(\pi(V)\cdot V \sigma)\psi^\eps,\psi^\eps\right)_{L^2(M)}  = 
\frac 1\eps \left( {\rm Op}_\eps ([ T\sigma,\pi(-\mathbb L)])\psi^\eps,\psi^\eps\right)_{L^2(M)}\\
&\qquad= \frac 1\eps \left( [ {\rm Op}_\eps(T\sigma),-\eps^2 \mathbb L_M^{\mathbb U}] \psi^\eps,\psi^\eps\right)_{L^2(M)}
-2 \left( {\rm Op}_\eps((\pi(V)\cdot V ) \circ T \sigma)\psi^\eps,\psi^\eps\right)_{L^2(M)}+O(\eps).
\end{align*}
{
By~\eqref{zorro19}, the first term of the right-hand side is $0$ and we have }
\begin{align*}
&\frac 1\eps \left( {\rm Op}_\eps(\pi(V)\cdot V \sigma)\psi^\eps,\psi^\eps\right)_{L^2(M)}  = 
 - 2\left( {\rm Op}_\eps((\pi(V)\cdot V)\circ T \sigma)\psi^\eps,\psi^\eps\right)_{L^2(M)} +O(\eps).
\end{align*}
{Plugging this expression of $\left( {\rm Op}_\eps(\pi(V)\cdot V \sigma)\psi^\eps,\psi^\eps\right)_{L^2(M)}$ in~\eqref{A2} and using one more time~\eqref{zorro19}, we finally get}
\begin{align*}
    O(\eps)& =\frac 2\eps  \left( {\rm Op}_\eps(\pi(V)\cdot V\sigma)\psi^\eps,\psi^\eps\right)_{L^2(M)}+ \left({\rm Op}_\eps(\mathbb L\sigma)\psi^\eps,\psi^\eps\right)_{L^2(M)}\\
    &= \left({\rm Op}_\eps (-4 \pi(V)\cdot V\circ T \sigma +\mathbb L\sigma)\psi^\eps,\psi^\eps\right)_{L^2(M)}.
\end{align*}
We now { pass to the limit $\eps\rightarrow 0$ and transform the latter equation according to the equality 
\[
-4 \pi(V)\cdot V\circ T \sigma +\mathbb L\sigma=  i \sum_{\zeta\in{\rm Sp} \widehat {\mathbb L}}\nabla_\lambda  \zeta\, \sigma\, \widehat{\mathbb P}_\zeta.
\]
induced by  Corollary~\ref{cor:[P,Q]} and the fact that $\sigma\in\mathcal B_0$. We are left with 
\[
\sum_{\zeta\in{\rm Sp} \widehat {\mathbb L}}
\int_{M\times \Omega_0} {\rm Tr} \left( \nabla_\lambda  \zeta\,  \sigma  (x,\pi^{\lambda}) \Gamma_\zeta (x,\pi^{\lambda}) \right)  d\gamma(x,\pi^{\lambda}) 
=0,
\]
and the relation holds for all $\sigma=\sigma_1\eta$ with $\sigma_1\in\mathcal B_0$ and $\eta\in\mathcal S(\mathfrak z^*)$ supported in the dense open set~$\Omega_0$. This
 concludes the proof.}

  \section{ Geometric invariance  }
  
  In this section, we address the geometric invariance of the objects that we have introduced above. 

\subsection{ Nilmanifolds as  filtered manifolds }

A stratified Lie group $G$ carries a natural filtration on its Lie algebra given by
 $$\mathfrak h_1 \subset \mathfrak h_2 \subset \cdots \subset \mathfrak h_k=\mathfrak g= T_eG,\qquad\mbox{with}\qquad
 \mathfrak h_j=\mathfrak g_1\oplus \cdots\oplus \mathfrak g_j.
 $$
 \def \SVD {{\Gamma_c\left(\mathcal{S}(\mathbb{G}M), |\Lambda|\mathcal{V}\right)}}  
  One can view the nilmanifold 
 $M$  as a \textit{filtered manifold} with associated filtration of subbundles
  \begin{equation}
    H_{\dot{x}}^1\subset H_{\dot{x}}^2\subset \cdots \subset H_{\dot{x}}^r=T_{\dot{x}}M, \quad x\in G, \quad [H^i, H^j]\subset H^{i+j}, \; 1\leq i+j\leq r,\label{filtration}
  \end{equation}
  given by $H_{\dot{x}}^i=d\pi_\Gamma\circ dL_x(\mathfrak{h}_i)$.
  Here $\pi_\Gamma: G\to M=\Gamma \backslash G$ is the quotient map and $L_x: G\to G$ is the left-translation. In fact, $G$ induces a left-invariant stratification by the subbundles $d\pi_\Gamma\circ d_{L_x} (\mathfrak{g}_i)$ of $TM$ in the obvious way, but such a stratification will not respect the Lie bracket of vector fields on $M$ unless one restricts to left-invariant vector fields. What's more, we will see that the semi-classical calculus only depends on the filtration, and not on the stratification or the metric. 
  
When $G$ is step 2, we have $\mathfrak{h}_1=\mathfrak{v}$ and $\mathfrak{h}_2=\mathfrak{g}$. In this case, the data of the filtration on $G$ is almost the same as a stratification except that one forgets the second {stratum} $\mathfrak{g}_2=\mathfrak{z}$. On $M$, the filtration is given by a single step 2 bracket generating subbundle $H^1\subset TM$ without a preferred {complement}.

\subsection{Filtration preserving maps}

 Let $U$ be an open subset of $M$ and 
$\Phi: U\rightarrow M$ a smooth map on $M$.  
We introduce two definitions. 
\begin{definition}
\begin{enumerate}
\item The smooth map   
  $\Phi$ is said to preserve the filtration at ${\dot{x}}\in U$  when
$$ 
d_{\dot{x}}\Phi\left(H_{\dot{x}}^i\right)\subseteq H_{\Phi({\dot{x}})}^i, \quad i=1, \ldots, r.
$$
\item 
The map $\Phi$ is Pansu differentiable at the point  ${\dot{x}}$ when for any $z\in G$, 
\begin{equation}
    \label{pansuDerivative}
    \lim_{\eps\to 0}  \delta_{\eps^{-1}}\left( \Phi({\dot{x}})^{-1} \Phi({\dot{x}}\delta_\eps z)\right)= \lim_{\eps\to 0}  \delta_{\eps^{-1}}\left(  \Phi_{\dot{x}}(\delta_\eps z)\right) =: {\rm PD}_{\dot{x}} \Phi(z).
\end{equation}
\item The map $\Phi$ is uniformly Pansu differentiable on $U$ if it is Pansu differentiable at every point in $U$, and the limit \eqref{pansuDerivative} holds locally uniformly on $U\times G$. 
\end{enumerate}
\end{definition}
\begin{remark}
Taking $U\subset M$ to be a sufficiently small neighborhood of $\dot{x}\in M$, we may consider $U$ as a neighborhood of $x\in G$ and lift $\Phi$ to a smooth map $\Phi_G: U\subset G\to G$. Then the above definition is equivalent to saying $\Phi_G$ is  Pansu differentiable (resp. uniformly Pansu differentiable) at $x\in G$ (resp. on $U\times G$). 
\end{remark}
On a neighborhood $U\subset M$ sufficiently small to identify with a neighborhood in $G$, the notions of Pansu differentiability and filtration preservation are related via  the following result \cite{FFF}:

\begin{theorem} [\cite{FFF}]\label{pansuEquiv}
The map  $\Phi$ is uniformly Pansu differentiable on $U$ if and only if $\Phi$ preserves the filtration at every point $x\in U.$
\end{theorem}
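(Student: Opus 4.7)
The plan is to handle the two directions separately using stratified exponential coordinates. Fix a basis $X_1,\ldots,X_n$ of $\mathfrak g$ with $X_i\in\mathfrak g_{d_i}$, and identify tangent vectors with elements of $\mathfrak g$ via left-trivialisation, so that $H^j_{\dot x}=dL_x(\mathfrak h_j)$ corresponds to $\mathfrak h_j\subset\mathfrak g$.

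For the forward implication, I would fix $\dot x\in U$ and, for each integer $j\in\{1,\ldots,r\}$, probe Pansu differentiability with $z=\exp_G V$ for an arbitrary $V\in\mathfrak g_j$. Since $\exp_G$ intertwines $\delta_\varepsilon$ with the Lie-algebra dilation, one has $\delta_\varepsilon z=\exp_G(\varepsilon^j V)$, and the classical chain rule gives
\[
\Phi(\dot x)^{-1}\Phi(\dot x\,\delta_\varepsilon z)
=\exp_G\!\bigl(\varepsilon^j\,d\Phi_{\dot x}(V)+O(\varepsilon^{2j})\bigr).
\]
Decompose $d\Phi_{\dot x}(V)=A_1+\cdots+A_r$ with $A_k\in\mathfrak g_k$; since $\delta_{\varepsilon^{-1}}$ acts on $\mathfrak g_k$ by $\varepsilon^{-k}$,
\[
\delta_{\varepsilon^{-1}}\bigl(\Phi(\dot x)^{-1}\Phi(\dot x\,\delta_\varepsilon z)\bigr)
=\exp_G\!\Bigl(\sum_{k=1}^{r}\varepsilon^{j-k} A_k + O(\varepsilon^{j})\Bigr),
\]
and existence of the limit as $\varepsilon\to 0$ forces $A_k=0$ for every $k>j$, i.e.\ $d\Phi_{\dot x}(V)\in\mathfrak h_j$. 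Letting $V$ range over $\mathfrak g_j$ and summing in $j$ yields the filtration preservation at $\dot x$.

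For the converse, by replacing $\Phi$ with $F_{\dot x_0}:=L_{\Phi(\dot x_0)^{-1}}\circ\Phi\circ L_{x_0}$ I may assume $F(e)=e$; left-invariance of the filtration ensures $F$ still preserves it throughout a neighbourhood of $e$. Writing $F(y)=\sum_i F^i(y)X_i$, the heart of the matter is the weighted Taylor vanishing
\[
\partial^\alpha F^i(0)=0
\quad\text{whenever}\quad
[\alpha]:=\sum_k\alpha_k\, d_k<d_i.
\]
The case $|\alpha|=1$ is immediate from filtration preservation at $e$. For the higher-order cases, I would differentiate the pointwise condition $dF|_y(\mathfrak h_k|_y)\subseteq\mathfrak h_k|_{F(y)}$ in the base point $y$: expressing $\mathfrak h_k|_y=dL_y(\mathfrak h_k)$ in the chosen frame via BCH yields a polynomial matrix in $y$, and repeated $y$-differentiation of this polynomial constraint at $0$ produces exactly the claimed vanishing of mixed derivatives of weighted degree less than $d_i$. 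Granting the claim, $(\delta_\varepsilon z)_j=\varepsilon^{d_j}z_j$ gives
\[
\varepsilon^{-d_i}F^i(\delta_\varepsilon z)
=\sum_{[\alpha]\geq d_i}\frac{\partial^\alpha F^i(0)}{\alpha!}\,\varepsilon^{[\alpha]-d_i}\,z^\alpha,
\]
which converges locally uniformly in $z$ to the homogeneous polynomial with $[\alpha]=d_i$. This limit is $\PD_{\dot x_0}\Phi(z)$, and uniformity in $\dot x_0\in U$ follows from smooth dependence of $F_{\dot x_0}$ on $\dot x_0$ together with uniform control of the Taylor remainder on compact sets.

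The main obstacle is the weighted Taylor vanishing claim: filtration preservation \emph{at a single point} yields only the linear ($|\alpha|=1$) statement, and extracting the higher-order vanishing requires propagating filtration preservation through a whole neighbourhood, forcing explicit BCH bookkeeping for the polynomial matrix of $dL_y$ and repeated differentiation of the resulting constraint. In the step-two setting this collapses to a single quadratic correction in the $\mathfrak z$-component of $F$, which already has the correct weighted degree once the linear $\mathfrak v\to\mathfrak z$ derivative has been killed at $e$; in general step $r$ the bookkeeping is the genuine technical crux.
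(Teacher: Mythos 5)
The paper does not contain its own proof of Theorem~\ref{pansuEquiv}; it cites \cite{FFF}, so I can only assess your proposal on its own terms rather than against an in-text argument.

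Your overall strategy is the right one, and the forward direction is essentially sound. One small inaccuracy there: after applying $\delta_{\varepsilon^{-1}}$, the remainder from the $O(\varepsilon^{2j})$ Taylor error is not $O(\varepsilon^j)$; component-wise in $\mathfrak g_k$ it is $O(\varepsilon^{2j-k})$, which can still blow up when $2j<k\le r$. The argument nevertheless survives because the coefficients of $\varepsilon^{j-d_i}$ with $d_i>j$ come \emph{only} from the linear part and cannot be cancelled by the remainder: after expanding $F^i_{\dot x}$ in a finite Taylor polynomial in the graded coordinates of $\delta_\varepsilon z$, the exponent attached to a multi-index $\alpha$ supported in degree $j$ is $j|\alpha|-d_i$, so different $|\alpha|$ give different powers of $\varepsilon$, and each negative power must vanish separately. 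It would be cleaner to phrase the forward direction this way (finite Taylor expansion plus separation of distinct $\varepsilon$-powers) rather than through the cruder $O(\varepsilon^{2j})$ bound.

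The genuine gap is in the converse, and you have correctly located it yourself: the weighted Taylor vanishing $\partial^\alpha F^i(0)=0$ for $[\alpha]<d_i$ is the heart of the proof, and your proposal only sketches why it should follow from filtration preservation on a whole neighbourhood. The one-line observation that the $|\alpha|=1$ case is filtration preservation at $e$ is immediate, but the claim that ``repeated $y$-differentiation of the polynomial constraint produces exactly the claimed vanishing'' is not carried out: one has to check that, after expressing $(dL_{F(y)})^{-1}\,dF|_y\,dL_y$ as a matrix-valued polynomial in $(y,F(y),\partial F(y))$, each new derivative of the constraint closes up — i.e., all the extra terms that appear are products $\partial_k F^c(0)\,\partial_j F^a(0)$ with weighted degrees forcing one of the factors to vanish by an already-established lower-order case. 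This induction on $[\alpha]$ is precisely the ``BCH bookkeeping'' you flag, and without it the lemma is only a claim. For the paper's actual setting (step $2$), you are right that the issue disappears — there $[\alpha]<d_i\le 2$ together with $[\alpha]\ge|\alpha|$ forces $|\alpha|\le 1$, so pointwise filtration preservation already gives everything — but the sentence about ``a single quadratic correction which already has the correct weighted degree'' reads as if a quadratic constraint had to be killed; the cleaner statement is that no quadratic constraint arises at all, and the $[\alpha]=2$ terms are precisely the ones that survive to form $\PD_{\dot x}\Phi$ in the $\mathfrak z$-components. Finally, the displayed identity $\varepsilon^{-d_i}F^i(\delta_\varepsilon z)=\sum_{[\alpha]\ge d_i}\frac{\partial^\alpha F^i(0)}{\alpha!}\varepsilon^{[\alpha]-d_i}z^\alpha$ should be a finite Taylor expansion with a controlled remainder, not a formal power series — smoothness does not give convergence of the full series, and the locally uniform convergence statement should be extracted from the remainder bound.
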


This result relates a morally algebraic property, Pansu differentiability, to a geometric property of being filtration-preserving. Consequently, the diffeomorphisms  $\Phi$ we consider in the sequel are uniformly Pansu differentiable, and the transformation of pseudodifferential operators by the pull-back associated with $\Phi$ will involve the Pansu derivative of $\Phi$. This leads us to employ the osculating Lie group and Lie algebra bundles in the next section.

 \subsection{Schwartz Vertical Densities}
 For a filtered manifold $M$, the \textit{osculating Lie algebra bundle} $\mathfrak{G}M$ (and the \textit{osculatig Lie group bundle} $\mathbb{G}M:=\exp(\mathbb{G}M)$), defined in \cite{FFF}, play the role of the tangent bundle. When $M=\Gamma\backslash G$, with the filtration \ref{filtration},  the fibers of $\mathfrak G M$ and  $\mathbb GM$,  are all isomorphic to $\mathfrak g$ and $G$ respectively. In particular, we have canonical identifications
 \[
   \mathfrak GM\cong M\times \mathfrak g
 \;\;\mbox{and} \;\;
 \mathbb GM\cong M\times G.
 \]
 The  Haar measure on each fibers $\mathbb G_{\dot{x}} M$ is given by $d_{\dot{x}}z= dz$ and the dual  sets by $\widehat {\mathbb G}_{\dot{x}}M=\widehat G$.
 
To any semi-classical pseudodifferential operator on a compact nilmanifold  $M=\Gamma\backslash G$, its convolution kernel $\kappa$ may be viewed as an element of $C^\infty(M, \mathcal{S}(G))$.
However, this is not the right space for the general case of filtered manifolds. Indeed, Theorem \ref{changeofvar} together with Equation \eqref{transformationsymbols} below will imply that as a pseudodifferential operator transforms under diffeomorphisms on $M$ preserving the filtration, its associated convolution kernel transforms like a density on the osculating group bundle. We show that it is natural to view the convolution kernels $\kappa$ as elements of the bundle of \textit{Schwartz vertical densities} on $\mathbb{G}M$, rather than functions in $C^\infty(M, \mathcal{S}(G))$. We briefly elaborate below.  
 
 Let $\mathcal{V}(\mathbb{G}M)$ be the vertical bundle of $\mathbb{G}M$, that is, the kernel of the map $\mathbb{G}M\to M$. Let $|\Lambda|\mathcal{V}(\mathbb{G}M)$ be the bundle of vertical densities, that is, the bundle over $\mathbb{G}M$ whose fibers are densities in the vertical spaces. 
Let $\mathcal{S}(\mathbb{G}M)=\coprod_{{\dot{x}}\in M} \mathcal{S}(\mathbb{G}_{\dot{x}}M)$ be the Fr\'echet vector bundle over $M$ whose fibers are Schwartz class functions. Furthermore, let $\mathcal{S}(\mathbb{G}M, |\Lambda|\mathcal{V})$ be the Fr\'echet bundle over $M$ whose fibers are Schwartz class densities on the vertical space. As in \cite{FFF}, denote by $\Svd$, the space of its smooth compactly supported sections, which we call the Schwartz vertical densities. After making a choice of Haar measure on $G$, this space is identified with $C^\infty(M, \mathcal{S}(G))$. 

Indeed, by left-invariance, we identify the fibers of $\mathcal{V}(\mathbb{G}M)$  with the Lie algebra $\mathfrak{g}$ and fibers of $|\Lambda|\mathcal{V}(\mathbb{G}M)$ with $|\Lambda|\mathfrak{g}$, the set of densities on the Lie algebra:
\begin{equation}
\label{trivialization}
  \mathcal{V}(\mathbb G M)\cong (M\times G) \times \mathfrak{g}, \quad \text{ whence } \quad|\Lambda|\mathcal{V}(\mathbb G M)\cong (M\times G) \times |\Lambda|\mathfrak{g}.
\end{equation}
The above trivializations give the identification
\begin{align*}
    \Svd\cong C^\infty (M, \mathcal{S}(G, |\Lambda|\mathfrak{g}))
\end{align*}
And a choice of Haar measure on $G$ gives $\mathcal{S}(G, |\Lambda|\mathfrak{g})\cong \mathcal{S}(G)$.
\medskip

For a choice of Haar measure $dz$ on $G$, which in turn gives a Haar system $\{d_{\dot{x}}z\}$ on $\mathbb{G}M$ through \eqref{trivialization}, the identifications of vertical Schwartz densities with functions is given explicitly by
$$
\kappa\in  \Svd: {\dot{x}}\mapsto \kappa_{\dot{x}}=\widetilde{\kappa}_{\dot{x}}d_{\dot{x}}z, \quad \widetilde{\kappa}_{\dot{x}}\in \mathcal S(\mathbb G_{\dot{x}}M).
$$
The  symbols $\sigma\in\mathcal A_0$ are defined as  the images of the elements  $\kappa\in \Svd$ by the fiberwise Fourier transform:
\begin{equation}
\label{symbol}
    {\dot{x}}\mapsto \sigma({\dot{x}},\pi) =\int_{z\in \mathbb{G}_{\dot{x}}M} \widetilde{\kappa}_{\dot{x}}(z) \pi(z)^* d_{\dot{x}} z ,\;\;\pi\in \widehat{\mathbb{G}}_{\dot{x}}M.
\end{equation}
Since our convolution kernels are densities, the integral \eqref{symbol} is independent of the choice of Haar measure.

\subsection{Semi-classical pseudodifferential calculus and  filtration diffeomorphisms} We keep the notations of the preceding section except we suppose $\Phi:U\subset M \to M$ is a diffeomorphism onto its image. Let $J_\Phi$ be the Jacobian of $\Phi$. We associate with $\Phi$ 

\begin{enumerate}
\item[(i)] 
  a unitary transformation $\PullTwo$ of $L^2(U)$ induced by $\Phi$ 
\[
\PullTwo(f) :=  J_\Phi^{1/2} f\circ \Phi,\;\; f\in L^2(U),
\]
\item[(ii)]  a map $\PullOne$ on the space of Schwartz vertical densities that extends to an isometry of $L^1(|\Lambda|\mathcal{V}(\mathbb{G}M))$
$$
(\PullOne \kappa)_x(z) := J_\Phi(x)  \, \kappa_{\Phi(x)} ( {\rm PD}_x\Phi(z)),\;\;\forall (x,z)\in U\times G.
$$
\end{enumerate}
We are interested in the properties of the operator $\PullTwo\circ  {\rm Op}_\eps (\sigma)\circ \PullTwo^{-1}$, in particular in the asymptotics in~$\eps$ of its semi-classical pseudodifferential symbol. The structure of the latter and the way it can be deduced from $\sigma$ will give information of the geometric nature of the objects. Indeed,  $\Phi$ induces several geometric transformations:
\begin{enumerate}
\item[(i)] 
  $\Phi$ induces a map on  representations
$$
    \widehat{\mathbb{G}}\Phi: 
    \left\{
    \begin{array}{rcl}
    U\times\widehat{G}&\longrightarrow& \Phi(U)\times\widehat{G}\\
    (x, \pi)&\longmapsto& \left(\Phi(x), \pi\circ ({\rm PD}_x\Phi)^{-1}\right)
\end{array}\right.
$$
\item[(ii)]  The \textit{generalized canonical transformation} $\widehat{\mathbb{G}}\Phi$  induces a pull-back on symbols
$$
(\widehat{\mathbb{G}}\Phi)^*\sigma(x, \pi):=\sigma(\widehat{\mathbb{G}}\Phi(x, \pi)).
$$
\end{enumerate}
The maps $\widehat{\mathbb{G}}\Phi$ and $\mathcal{I}_\Phi$ are intertwined by the group Fourier Transform: If $\sigma (x,\pi) = \widehat {\kappa_x}(\pi)$ for all $x\in U\subset M$ and $\pi\in \widehat{\mathbb{G}}_xM$, then for any filtration preserving differmorphism $\Phi: U\to M$
\begin{equation}
\label{transformationsymbols}
   (\widehat{\mathbb{G}}\Phi)^*\sigma(x, \pi) = \widehat{\PullOne\kappa_x}(\pi), \quad x\in U\subset M, \; \pi\in\widehat G=\widehat{\mathbb G}_xM. 
\end{equation}
 These two maps are involved in the description of the first term of the expansion of the semi-classical symbol of the operator $\PullTwo\circ  {\rm Op}_\eps (\sigma)\circ \PullTwo^{-1}$:

\begin{theorem}[\cite{FFF}]\label{changeofvar}
Assume that $\Phi$ is filtration preserving on~$U$. 
Then  in~$\mathcal L(L^2(U))$,
\vskip -0,2cm
$$
\PullTwo\circ  {\rm Op}_\eps (\sigma)\circ \PullTwo^{-1} ={\rm Op}_\eps\left((\widehat{\mathbb{G}}\Phi)^* \sigma \right) +O(\eps).
$$  
\end{theorem}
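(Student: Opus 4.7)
The plan is to lift $\Phi$ locally to a diffeomorphism $\Phi_G : U \subset G \to G$, which by Theorem \ref{pansuEquiv} is uniformly Pansu differentiable. Testing on $f \in \mathcal{C}^\infty_c(U)$ and performing in \eqref{eq:kernel} the semi-classical change of variable $y = x\delta_\eps z^{-1}$ on the $y$-integration, I would write
\begin{align*}
  \PullTwo \circ \Op_\eps(\sigma) \circ \PullTwo^{-1} f(x)
  = J_\Phi(x)^{1/2} \int_G \kappa_{\Phi(x)}(z^{-1})\, J_\Phi(w^\eps_x(z))^{-1/2}\, f(w^\eps_x(z))\, dz,
\end{align*}
with $w^\eps_x(z) := \Phi^{-1}(\Phi(x)\delta_\eps z)$. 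The natural next step is the further change of variable $w^\eps_x(z) = x\delta_\eps z'$, whose Jacobian is exactly $J_\Phi(x\delta_\eps z')$ by construction.

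Next I would identify the leading term. Uniform Pansu differentiability makes
\begin{align*}
  z(\eps, x, z') := \delta_{\eps^{-1}}\!\left(\Phi(x)^{-1}\Phi(x\delta_\eps z')\right) \longrightarrow {\rm PD}_x\Phi(z')
\end{align*}
locally uniformly in $(x, z')$ as $\eps \to 0$. Since ${\rm PD}_x\Phi$ is a group homomorphism commuting with the dilations, the formal limit of the conjugated operator is $\int_G (\PullOne\kappa)_x(z'^{-1})\, f(x\delta_\eps z')\, dz'$, which is precisely $\Op_\eps(\tilde\sigma) f(x)$ for the symbol $\tilde\sigma$ with convolution kernel $\PullOne\kappa$. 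By the intertwining identity \eqref{transformationsymbols}, $\tilde\sigma = (\widehat{\mathbb{G}}\Phi)^*\sigma$, which is the asserted leading symbol.

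For the $O(\eps)$ remainder, I would upgrade uniform Pansu differentiability to a quantitative expansion
\begin{align*}
  \delta_{\eps^{-1}}\!\left(\Phi(x)^{-1}\Phi(x\delta_\eps z')\right) = {\rm PD}_x\Phi(z') + \eps\, R^\eps_x(z'),
\end{align*}
using the smoothness of $\Phi_G$ and a Taylor expansion of $\Phi_G \circ \exp_G$ organised by weighted (homogeneous) degree in the stratification of $\mathfrak{g}$. The remainder $R^\eps_x$ would be smooth in $x$ and of polynomial growth in $z'$ on compact subsets of $U$, and a parallel expansion would give $J_\Phi(x\delta_\eps z')^{1/2} = J_\Phi(x)^{1/2} + O(\eps)$. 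Inserting these into the integral and applying a first-order Taylor expansion of the Schwartz function $\kappa_{\Phi(x)}$ around $({\rm PD}_x\Phi(z'))^{-1}$ would rewrite the remainder operator as $\eps$ times an $\eps$-quantization whose convolution kernel lies in a bounded family in $\mathcal{C}^\infty(M, \mathcal{S}(G))$; the $L^2$-boundedness proposition of Section \ref{subsec_pseudodiff} then delivers the required operator norm bound $O(\eps)$ on $\mathcal{L}(L^2(U))$.

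The main obstacle is precisely this quantitative Pansu expansion: strengthening the mere limit \eqref{pansuDerivative} into a remainder which is $O(\eps)$ with polynomial, Schwartz-compatible control in $z'$. Filtration preservation is indispensable here, as it ensures that the leading-order homogeneous part of $\Phi_G$ in each stratum has weight matching that stratum; without this, the sub-leading contributions would only be $O(1)$ after the semi-classical rescaling and the difference would not vanish in the limit.
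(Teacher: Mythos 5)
The paper itself does not prove Theorem~\ref{changeofvar}; it is imported from the reference~\cite{FFF}, so there is no in-paper proof to compare against. Evaluating your sketch on its own terms: the formal scaffolding is sound and, I believe, the same as the intended one. Your first change of variables is correct (after fixing a small slip — you wrote $y=x\delta_\eps z^{-1}$ but you are actually substituting around $\Phi(x)$, i.e.\ $y=\Phi(x)\delta_\eps z$, which is what your displayed formula uses). The second substitution $\Phi^{-1}(\Phi(x)\delta_\eps z)=x\delta_\eps z'$ and your Jacobian identification $dz=J_\Phi(x\delta_\eps z')\,dz'$ are also correct (this follows cleanly by factoring the change of variables through $u=x\delta_\eps z'$, $v=\Phi(u)$, $w=\Phi(x)^{-1}v$, $z=\delta_{\eps^{-1}}w$), and the identification of the principal part via ${\rm PD}_x\Phi$ being a graded group homomorphism and via~\eqref{transformationsymbols} is exactly right.

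The genuine gap is the one you flag yourself, and it is not a small one: turning the pointwise limit in \eqref{pansuDerivative} into an expansion with a remainder that, after insertion into the kernel, is still a \emph{uniformly bounded} family in $\mathcal C^\infty(M,\mathcal S(G))$ so that the $L^2$-boundedness of Section~\ref{subsec_pseudodiff} gives the operator-norm $O(\eps)$. Several points need care here and are not settled by your sketch. First, the display $\delta_{\eps^{-1}}\!\big(\Phi(x)^{-1}\Phi(x\delta_\eps z')\big) ={\rm PD}_x\Phi(z')+\eps R^\eps_x(z')$ uses Euclidean addition inside $G$; you need to say you are working in exponential coordinates (or write a multiplicative remainder), and on a step-$2$ group the rescaled Taylor expansion is genuinely anisotropic: the $\mathfrak v$-component and $\mathfrak z$-component of the error carry different homogeneous degrees and filtration preservation is exactly what kills the would-be $\eps^{-1}$ contribution of the $\mathfrak z$-output from the $\mathfrak v$-input — this must be made explicit, stratum by stratum. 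Second, your bound $R^\eps_x(z')=O(\text{poly}(z'))$ and the parallel $J_\Phi(x\delta_\eps z')^{1/2}=J_\Phi(x)^{1/2}+O(\eps|z'|)$ do not immediately combine with Schwartz decay of $\kappa_{\Phi(x)}$: the Taylor remainder evaluates $\nabla\kappa_{\Phi(x)}$ at an intermediate point on the segment between $z(\eps,x,z')^{-1}$ and ${\rm PD}_x\Phi(z')^{-1}$, and you need a quantitative lower bound ensuring that this point escapes to infinity with $z'$ (equivalently that the Pansu-like error stays subordinate to ${\rm PD}_x\Phi(z')$) so that the Schwartz decay dominates the polynomial growth uniformly in $\eps$. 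Third, the operator is only being considered on $L^2(U)$, $\Phi$ is only defined on $U$, but the kernel $\kappa^\eps_x$ has full support in $G$; a cutoff or a near-diagonal localization argument is needed to make the two changes of variables legitimate, and your sketch does not address it. These are exactly the analytic points the theorem is hiding, so while your route is the natural one and I would expect it to be close to the argument in~\cite{FFF}, as written the proposal stops precisely where the difficulty begins.
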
 
\begin{remark}
Theorem \ref{changeofvar} establishes the geometric invariance of the semi-classical calculus by filtration preserving differmorphisms $\Phi$. In particular, $\Phi$ does not need to preserve the action of $G$ on $M$, or even preserve the gradation. 
\end{remark}
The results of this section suggest that the semi-classical symbols  we defined in Section \ref{subsec_pseudodiff} ought to be the natural  generalization of symbols for arbitrary filtered manifolds. So defined, the semi-classical symbols are invariant under generalized canonical transformations of $\widehat{\mathbb{G}}M$  associated to differmorphisms preserving the filtration on $M$.

\section{Technical tools}\label{sec:tech}

This section is devoted to several technical results used in the proof of Theorem~\ref{theo:main2}. 

\subsection{Some $C^*$-algebras and their properties}
\label{subsec_B}

\subsubsection{The von Neumann algebra $L^\infty(M\times \widehat G)$}
 A measurable symbol $\sigma=\{\sigma(x,\pi): (x,\pi)\in M\times \widehat G\}$ is said to be bounded when  there exists a constant $C>0$ such that for $dxd\mu(\pi)$-almost all $(x,\pi)\in M\times \widehat G$, we have
 $ \|\sigma(x,\pi)\|_{\mathcal H_{\pi}} \leq C $.
 We denote by $\|\sigma\|_{L^{\infty}(M\times \widehat G)}$ the smallest of such constant $C>0$ and by $L^{\infty }(M\times \widehat G)$ 
 the space of bounded measurable symbols.
 We check readily that $\|\cdot\|_{L^{\infty}(M\times \widehat G)}$ is a norm on    $L^{\infty }(M\times \widehat G)$ which is  a $C^{*}$-algebra.
 We will later use the fact that it is a von Neumann algebra. 
 
 \subsubsection{The $C^*$-algebra $\mathcal A$ and its topological dual}
 Clearly, $\mathcal A_{0}$ is a subspace of $L^{\infty }(M\times \widehat G)$.
 Its closure  denoted by $\mathcal A$   for the norm 
$\|\cdot\|_{L^{\infty}(M\times \widehat G)}$  is a sub-$C^{*}$-algebra of $L^{\infty }(M\times \widehat G)$.
Its topological dual $\mathcal A^{*}$ is isomorphic to  the Banach space of operator-valued measures $\mathcal M_{ov}(M\times \widehat G)$ via 
$$
\mathcal M_{ov}(M\times \widehat G)\ni 
\Gamma d\gamma \mapsto \ell_{\Gamma d\gamma}\in \mathcal A^{*}, 
\qquad
\ell_{\Gamma d\gamma}(\sigma):=\int_{M\times \widehat G}
{\rm Tr} \left(\sigma(x,\pi) \Gamma (x,\pi) \right) d\gamma(x,\pi).
$$
Moreover, the isomorphism is isometric:
$$
\|\ell_{\Gamma d \gamma}\|_{\mathcal A^{*}} =\|\Gamma d\gamma\|_{\mathcal M_{ov}(M\times \widehat G)}, 
\quad\mbox{where}\quad 
\|\Gamma d\gamma\|_{\mathcal M_{ov}}:=
 \int_{M\times \widehat G}
{\rm Tr} \left|\Gamma (x,\pi) \right| d\gamma(x,\pi),
$$
and the positive linear functionals on $\mathcal A$ are the $\ell=\ell_{\Gamma d\gamma}$'s with  $\Gamma d\gamma \geq 0$. 

\subsubsection{The $C^*$-algebra $\mathcal B$ and its topological dual}
Let $\mathcal B_{0}$ be the  subspace of $\mathcal A_{0}$ of symbols commuting with $\widehat {\mathbb L}$.
Clearly 
$\mathcal B_{0}$
 contains all the symbols of the form $a(x) \psi(\widehat {\mathbb L})$, $a\in \mathcal C^{\infty}(M)$, $\psi\in \mathcal S(\R)$, 
by Hulanicki's theorem (see~\cite{hul}):

\begin{theorem}[Hulanicki]
\label{thm_hula}
	The convolution kernel of a spectral multiplier $\psi(\mathbb L_G)$ of $\mathbb L_G$ for a Schwartz function 
 $\psi\in \mathcal S(\R)$ is Schwartz on $G$.
\end{theorem}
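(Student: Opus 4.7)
The plan is to reduce Hulanicki's theorem to the corresponding statement for the heat semigroup $\{e^{-t\mathbb{L}_G}\}_{t>0}$ and then use an integral representation of $\psi \in \mathcal{S}(\mathbb{R})$ in terms of heat-kernel building blocks. First I would establish the key prerequisite that the convolution kernel $h_t$ of $e^{-t\mathbb{L}_G}$ belongs to $\mathcal{S}(G)$ for each $t>0$, with Schwartz seminorms that depend controllably (at worst polynomially) on $t$ and $t^{-1}$. The ingredients are: (i) H\"ormander's hypoellipticity theorem, giving smoothness of $h_t$ for $t>0$; (ii) the homogeneity of $\mathbb{L}_G$ under dilations, which yields the scaling identity $h_t(x) = t^{-Q/2} h_1(\delta_{t^{-1/2}}x)$ and reduces everything to $t=1$; and (iii) Gaussian-type upper bounds of the form $|X^\alpha h_t(x)| \lesssim_{N,\alpha} t^{-(Q+|\alpha|)/2}(1+t^{-1/2}|x|)^{-N}$ for every left-invariant monomial $X^\alpha$ and every $N$, derived by combining the Nash/Davies--Gaffney type estimates with the semigroup factorisation $h_t = h_{t/2}*h_{t/2}$ and subelliptic a priori bounds applied to $\mathbb{L}_G^k h_t = (-\partial_t)^k h_t$.

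Next, for a general $\psi \in \mathcal{S}(\mathbb{R})$ I would produce an integral representation over the heat semigroup. Since $\mathrm{sp}(\mathbb{L}_G) \subset [0,\infty)$, only the values of $\psi$ on $[0,\infty)$ matter. Set $\eta(\mu):=\psi(\mu^2)$; then $\eta \in \mathcal{S}(\mathbb{R})$ and its Fourier transform $\hat\eta$ is Schwartz. From the heat-kernel identity on the positive real line,
\begin{equation*}
e^{-s^2 \lambda} = \frac{1}{\sqrt{\pi}} \int_{\mathbb{R}} e^{-\tau^2} e^{-2i\tau s \sqrt{\lambda}}\, d\tau, \qquad \lambda \geq 0,
\end{equation*}
(or simply Fourier inversion applied to $\eta$ evaluated at $\sqrt{\lambda}$), I can write $\psi(\lambda)$ as an absolutely convergent integral of the form $\int_\mathbb{R} a(\tau)\,e^{-t(\tau)\lambda}\,d\tau$ for $\lambda \geq 0$, with coefficients coming from $\hat\eta$, or more directly as a superposition $\psi(\lambda) = \int_0^\infty e^{-t\lambda}\, d\rho_\psi(t)$ plus a remainder handled by functional calculus of an auxiliary compactly supported bump. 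Substituting into the spectral calculus gives, at the level of convolution kernels, $k_\psi(x) = \int a(\tau)\, h_{t(\tau)}(x)\, d\tau$.

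Combining the two steps, I would show this integral converges in $\mathcal{S}(G)$. By step 1, each Schwartz seminorm $\|\,\cdot\,\|_{\mathcal{S}(G),N,\alpha}$ of $h_{t(\tau)}$ is bounded by a polynomial factor in $t(\tau)$ and $t(\tau)^{-1}$; by the Schwartz property of $\eta$, the weights $a(\tau)$ decay faster than any polynomial. Hence $\int |a(\tau)| \cdot \|h_{t(\tau)}\|_{\mathcal{S}(G),N,\alpha}\,d\tau < \infty$, so $k_\psi \in \mathcal{S}(G)$, with seminorm estimates controlled by $\mathcal{S}(\mathbb{R})$-seminorms of $\psi$. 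By uniqueness of convolution kernels for left-invariant bounded operators (applied to $\psi(\mathbb{L}_G)$ via spectral calculus), this identifies $k_\psi$ as the convolution kernel of $\psi(\mathbb{L}_G)$.

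The main obstacle is step 1, specifically obtaining the Gaussian-type decay of $h_t$ and all of its derivatives with the right dependence on $t$. Smoothness and the scaling identity are essentially free, but the pointwise Gaussian bound requires genuine analysis on $G$ (either via the Nash inequality with Sobolev embedding, or via an on-diagonal bound combined with Davies--Gaffney off-diagonal estimates). Once one has these estimates for $h_t$ in the right polynomially controlled form in $t$, steps 2 and 3 are essentially bookkeeping using the Schwartz decay of the Fourier-type weights and the nuclearity of $\mathcal{S}(G)$.
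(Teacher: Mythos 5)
The paper states this theorem as a cited result (Hulanicki 1984, reference \cite{hul}) without supplying a proof, so there is no in-paper argument to compare against; I therefore assess your proposal on its own. Your step 1 is sound: the heat kernel of a homogeneous sub-Laplacian on a stratified group is Schwartz, the scaling identity you state is correct, and the Gaussian-type bounds for $h_t$ and its left-invariant derivatives are classical. The genuine gap is in step 2, where you try to convert $\psi$ into a superposition of heat operators. Fourier inversion applied to $\eta(\mu)=\psi(\mu^2)$ gives
\[
\psi(\lambda)=\eta\bigl(\sqrt{\lambda}\bigr)=\frac1{2\pi}\int_{\mathbb{R}}\hat\eta(\tau)\,e^{i\tau\sqrt{\lambda}}\,d\tau ,\qquad \lambda\geq0,
\]
which, once substituted into the spectral calculus of $-\mathbb{L}_G$, is a superposition of \emph{wave propagators} $e^{i\tau\sqrt{-\mathbb{L}_G}}$, not of heat operators $e^{-t\mathbb{L}_G}$. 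The subordination formula you invoke goes the wrong way for your purposes: it expresses the heat semigroup as an average of wave propagators, not vice versa. In fact, no representation $\psi(\lambda)=\int a(\tau)\,e^{-t(\tau)\lambda}\,d\tau$ with $a$ rapidly decaying and $t(\tau)>0$ under control exists for a general $\psi\in\mathcal{S}(\mathbb{R})$: the natural attempt is to factor out $e^{-\lambda}$ and Fourier-invert $\psi(\lambda)e^{\lambda}$, but that function is typically not Schwartz on $[0,\infty)$ because it may grow. Your alternative phrasing, a superposition $\int_0^\infty e^{-t\lambda}\,d\rho_\psi(t)$ plus a remainder handled by ``an auxiliary compactly supported bump,'' is precisely where the actual difficulty resides and is left unsubstantiated.

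The known proofs do something different. One route keeps the wave representation above and exploits the \emph{finite propagation speed} of $\cos(\tau\sqrt{-\mathbb{L}_G})$, combined with subelliptic Sobolev embeddings and the rapid decay of $\hat\eta$, to win the spatial decay of the kernel. Another uses the Helffer--Sj\"ostrand formula $\psi(\mathbb{L}_G)=-\frac1\pi\int_{\mathbb{C}}\bar\partial\tilde\psi(z)\,(\mathbb{L}_G-z)^{-1}\,dA(z)$ together with resolvent kernel estimates. Hulanicki's original argument does build on the heat kernel, but via a functional-analytic approximation in a suitable ideal of $\mathcal{S}(G)$, not via an explicit superposition of the form you propose. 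As a secondary issue, even if such a superposition were available, your step 3 would still need to handle the $t\to0$ blow-up of the seminorms $\|h_t\|_{\mathcal{S},N,\alpha}\sim t^{-(Q+|\alpha|)/2}(\cdots)$; rapid decay of $a$ in $\tau$ alone does not guarantee integrability against that singularity without additional structure on the map $\tau\mapsto t(\tau)$.
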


We denote by $\mathcal B$ the closure of $\mathcal B_{0}$   for the norm 
$\|\cdot\|_{L^{\infty}(M\times \widehat G)}$.
Property  \eqref{item_sp} of $\widehat {\mathbb L}$ recalled in Section \ref{subsubsec_FG}
 implies that 
 $\mathcal B$ is the  subspace of $\mathcal A$ of symbols commuting with every $\widehat{\bP}_{\zeta}$, $\zeta>0$.
 We check readily  that $\mathcal B$ is a sub-$C^{*}$-algebra of $\mathcal A$ and that  $\mathcal B_{0}=\mathcal A_{0}\cap \mathcal B$.
 The next statement identifies the topological dual of $\mathcal B$:

 \begin{proposition}
 \label{prop_Bdual}
  Via $\Gamma d\gamma \mapsto \ell_{\Gamma d\gamma}|_{\mathcal B}$,
 the topological dual $\mathcal B^{*}$ of $\mathcal B$ is isomorphic 
 with the closed subspace $\mathcal M_{ov}(M\times \widehat G)^{(\widehat{\mathbb L})}$ of operator valued measures $\Gamma d\gamma \in \mathcal M_{ov}(M\times \widehat G)$ such that the operator~$\Gamma$ commutes with $\widehat \bP_\zeta$
 for all $\zeta>0$,
 in the sense that
  $$
  \forall \zeta> 0 \qquad 
 \pi( \bP_{\zeta}) \Gamma (x,\pi) =  \Gamma (x,\pi)\pi( \bP_{\zeta}) 
 \quad \mbox{for} \ \gamma-\mbox{almost all}\ (x,\pi) \in M\times \widehat G.
 $$
 \end{proposition}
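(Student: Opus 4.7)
The plan is to use the known identification $\mathcal A^{*}\cong \mathcal M_{ov}(M\times \widehat G)$ and construct, for the subalgebra $\mathcal B\hookrightarrow \mathcal A$, a fiberwise conditional-expectation-type map onto the commutant of $\widehat{\mathbb L}$. Fix for each $\pi\in \widehat G$ the discrete family of spectral projectors $\pi(\bP_{\zeta})$, $\zeta\in \mathrm{sp}(\pi(-\mathbb L))$, which are orthogonal and sum (strongly) to the identity. For an operator field $T=\{T(\pi)\}$, define
\[
\mathbb E(T)(\pi):=\sum_{\zeta\in \mathrm{sp}(\pi(-\mathbb L))}\pi(\bP_{\zeta})\,T(\pi)\,\pi(\bP_{\zeta}).
\]
Standard Hilbert-space estimates give $\|\mathbb E(T)(\pi)\|\leq \|T(\pi)\|$, and for trace-class fibers $\|\mathbb E(T)(\pi)\|_{1}\leq \|T(\pi)\|_{1}$; measurability in $\pi$ is preserved because $\widehat{\bP}_{\zeta}$ can be exhibited as a strong limit of $\psi_{n}(\widehat{\mathbb L})$ for suitable $\psi_{n}\in \mathcal S(\R)$, which are measurable fields by Hulanicki (Theorem \ref{thm_hula}). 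By construction, $\mathbb E(T)$ commutes with every $\widehat{\bP}_{\zeta}$.

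\textbf{Well-definedness and continuity of the restriction map.} Given $\Gamma d\gamma\in \mathcal M_{ov}^{(\widehat{\mathbb L})}$, the linear form $\ell_{\Gamma d\gamma}|_{\mathcal B}$ makes sense and satisfies $\|\ell_{\Gamma d\gamma}|_{\mathcal B}\|_{\mathcal B^{*}}\leq \|\Gamma d\gamma\|_{\mathcal M_{ov}}$. Moreover, $\mathcal M_{ov}^{(\widehat{\mathbb L})}$ is a closed subspace of $\mathcal M_{ov}$: if $\Gamma_{n}d\gamma_{n}\to \Gamma d\gamma$ in total-variation norm and each $\Gamma_{n}$ commutes with every $\widehat{\bP}_{\zeta}$, then the pairing with test symbols involving $[\widehat{\bP}_{\zeta},\sigma]$ forces $[\widehat{\bP}_{\zeta},\Gamma]=0$ at $\gamma$-almost every point.

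\textbf{Surjectivity via Hahn--Banach and averaging.} Let $\ell\in\mathcal B^{*}$. Extend by Hahn--Banach to $\tilde\ell\in \mathcal A^{*}$, write $\tilde\ell=\ell_{\Gamma_{0}d\gamma_{0}}$, and set $\Gamma:=\mathbb E(\Gamma_{0})$ pointwise. Then $\Gamma d\gamma_{0}\in \mathcal M_{ov}^{(\widehat{\mathbb L})}$ by the properties above. For any $\sigma\in \mathcal B$ the cyclic property of trace gives
\[
\int_{M\times \widehat G}\!{\rm Tr}\bigl(\sigma\, \mathbb E(\Gamma_{0})\bigr)\,d\gamma_{0}
=\sum_{\zeta}\int_{M\times \widehat G}\!{\rm Tr}\bigl(\pi(\bP_{\zeta})\sigma\pi(\bP_{\zeta})\,\Gamma_{0}\bigr)\,d\gamma_{0}
=\int_{M\times \widehat G}\!{\rm Tr}(\sigma\Gamma_{0})\,d\gamma_{0},
\]
where the last equality uses that $\sigma$ already commutes with each $\pi(\bP_{\zeta})$. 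Hence $\ell_{\Gamma d\gamma_{0}}|_{\mathcal B}=\tilde\ell|_{\mathcal B}=\ell$.

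\textbf{Injectivity.} Suppose $\Gamma d\gamma\in \mathcal M_{ov}^{(\widehat{\mathbb L})}$ satisfies $\ell_{\Gamma d\gamma}|_{\mathcal B}=0$. For any $\sigma\in \mathcal A_{0}$, the commutation of $\Gamma$ with $\widehat{\bP}_{\zeta}$ and cyclicity of trace yield
\[
\int_{M\times \widehat G}\!{\rm Tr}(\sigma \Gamma)\,d\gamma
=\int_{M\times \widehat G}\!{\rm Tr}\bigl(\mathbb E(\sigma)\,\Gamma\bigr)\,d\gamma.
\]
The key technical point—and the main obstacle—is that $\mathbb E(\sigma)\in \mathcal B$, so that the right-hand side vanishes. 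This follows from approximating $\mathbb E(\sigma)$ in the $L^{\infty}(M\times \widehat G)$ norm by finite sums $\sum_{i}\sigma\,\psi_{i}(\widehat{\mathbb L})\,\sigma\,\psi_{i}(\widehat{\mathbb L})$ type expressions produced by a spectral partition of unity for $\widehat{\mathbb L}$ via Schwartz functions $\psi_{i}$: by Hulanicki's theorem each such block lies in $\mathcal A_{0}$, and a standard truncation argument using boundedness of $\sigma$ on each spectral band shows that the commutant projection $\mathbb E(\sigma)$ belongs to the closure $\mathcal A\cap \{\widehat{\mathbb L}\}'=\mathcal B$. Applying $\ell_{\Gamma d\gamma}|_{\mathcal B}=0$ yields $\int\!{\rm Tr}(\sigma\Gamma)\,d\gamma=0$ for all $\sigma\in\mathcal A_{0}$, whence by the $\mathcal A^{*}$-isomorphism $\Gamma d\gamma=0$ in $\mathcal M_{ov}(M\times\widehat G)$. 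Combining injectivity and surjectivity with the open mapping theorem gives the desired Banach space isomorphism. The delicate step is precisely the Hulanicki-based approximation establishing $\mathbb E(\mathcal A_{0})\subset \mathcal B$; everything else reduces to general duality and the conditional expectation formalism.
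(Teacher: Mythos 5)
Your surjectivity argument (Hahn--Banach extension to $\mathcal A^*$, followed by the conditional-expectation projection $\Gamma_0\mapsto \mathbb E(\Gamma_0)=\sum_\zeta \pi(\bP_\zeta)\Gamma_0\pi(\bP_\zeta)$ and verification that the pairing is unchanged on $\mathcal B$) is essentially identical to the paper's Step~1. The issue is in the injectivity step.

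Your injectivity argument rests on the claim that for $\sigma\in\mathcal A_0$, the conditional expectation $\mathbb E(\sigma)$ lies in $\mathcal B$, i.e.\ in the \emph{norm} closure $\overline{\mathcal B_0}^{\,\|\cdot\|_{L^\infty}}$. This is not justified, and it is precisely the point the paper is careful to avoid. The series $\sum_\zeta\pi(\bP_\zeta)\sigma(\pi)\pi(\bP_\zeta)$ converges in the strong operator topology but there is no reason for it to converge in operator norm, let alone uniformly in $\pi\in\widehat G$; the "standard truncation using boundedness of $\sigma$ on each spectral band" you invoke would at best give strong convergence. (Already for the Heisenberg group $\mathbb E$ is the diagonal compression in the Hermite basis, a conditional expectation that is not continuous for the norm topology as a map on $\mathcal B(L^2(\R))$.) Consequently one only gets $\mathbb E(\sigma)\in vN\mathcal B$, the von Neumann algebra generated by $\mathcal B$ inside $L^\infty(M\times\widehat G)$, not $\mathbb E(\sigma)\in\mathcal B$. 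This distinction between $\mathcal B$ and $vN\mathcal B$ is exactly what the paper's "Side step'' introduces, and it is not cosmetic.

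To make the injectivity go through, one must also know that the linear functional $\ell_{\Gamma d\gamma}$ extends to a \emph{normal} functional on $vN\mathcal A$ so that its vanishing on $\mathcal B$ propagates to vanishing on $vN\mathcal B$. The paper does this explicitly: it extends $\ell$ to $L$ on $vN\mathcal A$, decomposes $\ell(\sigma)=\int_0^\infty L_\zeta(\sigma)$ using the discrete spectral resolution, rewrites $L_\zeta(\sigma)=L_\zeta\bigl(\pi(\bP_\zeta)\sigma\pi(\bP_\zeta)\bigr)$ via the commutation of $\Gamma$ with $\pi(\bP_\zeta)$, and then uses membership of $\pi(\bP_\zeta)\sigma\pi(\bP_\zeta)$ in $vN\mathcal B$ (established by the Hulanicki argument only up to strong limits) to conclude $L_\zeta=0$. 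Your proof should be reformulated along these lines: replace "$\mathbb E(\sigma)\in\mathcal B$" by "$\pi(\bP_\zeta)\sigma\pi(\bP_\zeta)\in vN\mathcal B$", and justify that the functional extends to the von Neumann algebra with the correct continuity. Without this, the key step does not hold as stated.

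A minor remark: the approximating expression you write, $\sum_i\sigma\,\psi_i(\widehat{\mathbb L})\,\sigma\,\psi_i(\widehat{\mathbb L})$, is clearly a typo for $\sum_i\psi_i(\widehat{\mathbb L})\,\sigma\,\psi_i(\widehat{\mathbb L})$; and the open mapping theorem is not really needed here since the map is a norm-nonincreasing bijection between Banach spaces whose inverse is bounded directly from the surjectivity construction.
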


 \begin{proof}
 \textbf{Step 0.}
We observe that if two pairs $(\Gamma,\gamma)$ and $(\Gamma_{1},\gamma_{1})$ are equivalent and one of them satisfies the commutative condition with $\widehat \bP_\zeta$ for all $\zeta>0$,
then so does the other. Hence, $\mathcal M_{ov}(M\times \widehat G)^{(\widehat{\mathbb L})}$ is a well defined subset of $\mathcal M_{ov}(M\times \widehat G)$.
 One checks that it is a closed subspace of $\mathcal M_{ov}(M\times \widehat G)$. 
 \smallskip 
 
\noindent  \textbf{Step 1.} Let $\ell\in \mathcal B^{*}$. By the Hahn-Banach theorem, 
 this functional extends to  $\tilde \ell \in \mathcal A^{*}$, i.e. $\tilde\ell_{|\mathcal B} = \ell$.
 Denote by  $\Gamma d \gamma \in \mathcal M_{ov}(M\times \widehat G)$ 
the corresponding operator-valued measure: $\tilde \ell = \ell_{\Gamma d\gamma}$. 
Now set 
$$
\Gamma_{1} (x,\pi)
:= \sum_{\zeta\in {\rm sp}(\pi(\mathbb L) )}\pi(\bP_{\zeta}) \Gamma (x,\pi) \pi(\bP_{\zeta})   .
$$
The operator-valued measure $\Gamma_{1}d\gamma$ is a well defined  element of  $\mathcal M_{ov}(M\times \widehat G)$ satisfying the condition of commutativity with $\widehat \bL$   so  $\Gamma_{1}d\gamma \in \mathcal M_{ov}(M\times \widehat G)^{(\widehat{\mathbb L})}$.
 Let us show that it  coincides with $\ell_{\Gamma d\gamma}$ on $\mathcal B$. 
 Let $\sigma\in \mathcal B$. 
 Since $\sum_{\zeta\in {\rm sp}(\pi(\mathbb L) )}\pi(\bP_{\zeta}) $ is the identity operator on $\mathcal H_{\pi}$, we have
 \begin{align*}
  \ell_{\Gamma d\gamma}(\sigma)
  &=\int_{M\times \widehat G} \sum_{\zeta\in {\rm sp}(\pi(\mathbb L) )}
{\rm Tr} \left(\sigma(x,\pi) \Gamma (x,\pi) \pi(\bP_{\zeta})\right) d\gamma(x,\pi)\\
  &=\int_{M\times \widehat G} \sum_{\zeta\in {\rm sp}(\pi(\mathbb L)- }
{\rm Tr} \left(\sigma(x,\pi) \pi(\bP_{\zeta}) \Gamma (x,\pi) \pi(\bP_{\zeta})\right) 
d\gamma(x,\pi),) \end{align*}
 since $\pi(\bP_{\zeta}) = \pi(\bP_{\zeta})^{2}$ commutes with $\sigma(x,\pi)$. We recognise $\ell_{\Gamma_{1}d\gamma}(\sigma)$ on the right-hand side. 
 We have obtained that any $\ell\in \mathcal B^{*}$ may be written as the restriction to $\mathcal B$ of $\ell_{\Gamma_{1}d\gamma}$, for some 
 $\Gamma_{1}d\gamma \in \mathcal M_{ov}(M\times \widehat G)^{(\widehat{\mathbb L})}$. 
 
 \smallskip

 In other words, we have  proved  that  $\Gamma d\gamma \mapsto \ell_{\Gamma d\gamma}|_{\mathcal B}$ maps $ \mathcal M_{ov}(M\times \widehat G)^{(\widehat{\mathbb L})}$ onto~$\mathcal B^*$.
 This map is continuous and linear. It remains to show that it is injective. 

 \smallskip
 
\noindent  \textbf{Side step.} Let us open a parenthesis.
 The von Neumann algebra
 $L^{\infty }(M\times \widehat G)$ is a $C^{*}$ algebra containing $\mathcal A$ and  we denote by $vN\mathcal A$  the von Neumann algebra generated by $\mathcal A$.
This means that  $vN\mathcal A$ is the closure of $\mathcal A$ for the strong operator topology  in $L^{\infty }(M\times \widehat G)$.
We are going to use this 
 von Neumann algebra by considering the natural unique extension of 
 $\ell=\ell_{\Gamma d\gamma} \in \mathcal A^{*}$ 
 to a continuous linear functional on the von Neumann algebra $vN\mathcal A$ of~$\mathcal A$. 

Since  $\mathcal B \subset \mathcal A$, 
 we also have $vN\mathcal B \subset vN\mathcal A$ where
$vN\mathcal B$  denotes
the von Neumann algebra generated by $\mathcal B$.  
Moreover, $vN\mathcal B$ is the subspace of the symbols $\sigma\in vN\mathcal A$ commuting with  $\widehat \bP_{\zeta}$ $dxd\mu(\pi)$-almost everywhere for every , $\zeta>0$. 

Finally, we observe that for $\zeta>0$ and $\sigma\in \mathcal A$, the symbol $\pi(\widehat \bP_{\zeta} )\sigma\pi( \widehat \bP_{\zeta} )$ is in  $vN\mathcal A$.
Indeed,  using
Hulanicki's theorem (Theorem \ref{thm_hula})
together with $\mathcal S(G)*\mathcal S(G)\subset \mathcal S(G)$, we obtain that if $\sigma\in \mathcal A_{0}$ then for any $\psi_{1},\psi_{2}\in \mathcal S(\R)$, the symbol $\psi_{1}(\widehat {\mathbb L})\sigma \psi_{2}(\widehat {\mathbb  L})$ is in $\mathcal A_{0}$.
Taking limits for suitable sequences of $\sigma,\psi_{1},\psi_{2}$ implies that  the symbol $\pi(\widehat \bP_{\zeta} )\sigma\pi( \widehat \bP_{\zeta} )$ is in  $vN\mathcal A$ for any $\sigma\in \mathcal A$.

 \smallskip

\noindent \textbf{Step 2.} Let us now consider $\Gamma d\gamma  \in \mathcal M_{ov}(M\times \widehat G)^{(\widehat{\mathbb L})}$  such that $\ell:=\ell_{\Gamma d \gamma }$ vanishes  on $\mathcal B$. We want to show $\ell=0$. We extend $\ell$ to a functional $L$ on $vN\mathcal A$. This functional vanishes on $vN\mathcal B$. We set
 $$
 L_{\zeta}(\sigma):= \int_{M\times \widehat G}{\rm Tr} \left(\sigma(x,\pi) \pi(\bP_{\zeta}) \Gamma (x,\pi) \right) d\gamma(x,\pi) , \qquad \zeta>0.
 $$
 We check readily that $\zeta\mapsto L_{\zeta}(\sigma)$ defines a complex measure on $[0,\infty)$ with total mass that is smaller or equal to $ \|\sigma\|_{L^{\infty}(M\times \widehat G)}\|\Gamma d\gamma\|_{\mathcal M_{ov}}$.
 Moreover, $ \ell(\sigma) = \int_{0}^{+\infty} L_{\zeta}(\sigma)$
 since $\sum_{\zeta\in {\rm sp}(\pi(\mathbb L) }\pi(\bP_{\zeta}) $ is the identity operator on $\mathcal H_{\pi}$.
 
Using $\bP_{\zeta}^2= \bP_{\zeta}$ and the commutation of $\Gamma$ with $\pi(\bP_{\zeta})$ $d\gamma$-a.e., together with trace property, we obtain 
  $$
 L_{\zeta}(\sigma)= \int_{M\times \widehat G}{\rm Tr} \left( \pi(\bP_{\zeta}) \sigma(x,\pi) \pi(\bP_{\zeta}) \Gamma (x,\pi) 
 \right) d\gamma(x,\pi)= L_\zeta(\pi(\bP_{\zeta}) \sigma(x,\pi) \pi(\bP_{\zeta}))
 $$
 with $\pi(\bP_{\zeta}) \sigma(x,\pi) \pi(\bP_{\zeta})\in vN\mathcal B$. Arguing as above (in the side step), we deduce
 $L_{\zeta}=0$, whence $L=0$ and $\ell=0$. 
 This implies the injectivity of 
  $\Gamma d\gamma \mapsto \ell_{\Gamma d\gamma}|_{\mathcal B}$  on $\mathcal M_{ov}(M\times \widehat G)^{(\widehat{\mathbb L})}$.  
 \end{proof}
 
 The proof above has an important consequence regarding the restriction of symbols to $M\times \widehat G_{1}$, a notion we now explain. 
 
 \subsubsection{Restriction of symbols to $M\times \widehat G_{1}$}
 The restriction $\sigma |_{M\times \widehat G_{1}}$ of $\sigma \in \mathcal A$ with kernel $ \kappa_{x}(y)$, 
 to $M\times \widehat G_{1}$ is given by
 $$
 \sigma |_{M\times \widehat G_{1}}(x,\omega)=
 \sigma(x,\pi^{\omega}) = \mathcal F_{\mathfrak v} \int_{\mathfrak z}\kappa_{x}(\cdot ,z) dz(\omega),
 \quad (x,\omega)\in M\times \mathfrak v^{*},
 $$
 having identified $\widehat G_{1}$ with $\mathfrak v^{*}$.
 Moreover, 
 we can therefore identify 
 $$
 \mathcal  A|_{M\times \widehat G_{1}}:=\{\sigma |_{M\times \widehat G_{1}}, \sigma \in \mathcal A\}
 $$
with a sub-space of $\mathcal C_{0}(M\times \mathfrak v^{*})$. 
In fact, we can  show 
\begin{lemma}
\label{lem_rest=C0}
	We have $
\mathcal C_{0}(M\times \mathfrak v^{*}) = \mathcal  A|_{M\times \widehat G_{1}}.$
\end{lemma}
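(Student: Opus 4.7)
The forward inclusion $\mathcal A|_{M\times\widehat G_1}\subset \mathcal C_0(M\times \mathfrak v^*)$ is already recorded in the passage immediately preceding the statement, so the real content is the reverse inclusion $\mathcal C_0(M\times\mathfrak v^*)\subset \mathcal A|_{M\times \widehat G_1}$. My plan has two parts: a soft $C^*$-algebraic step establishing that the image of the restriction map is closed, followed by a concrete construction exhibiting a uniformly dense subspace of $\mathcal C_0(M\times \mathfrak v^*)$ inside that image.

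First I would check that the restriction map
$$r:\mathcal A \longrightarrow L^\infty(M\times \mathfrak v^*),\qquad \sigma\mapsto \sigma|_{M\times\widehat G_1},$$
is a well-defined $*$-homomorphism of $C^*$-algebras. Well-definedness and continuity (with operator norm $\leq 1$) follow because, for $\sigma\in\mathcal A_0$, the scalar $\sigma(x,\pi^\omega)$ is bounded pointwise by $\|\sigma\|_{L^\infty(M\times \widehat G)}$, allowing a density extension from $\mathcal A_0$ to $\mathcal A$. Multiplicativity and $*$-preservation are immediate since on the one-dimensional spaces $\mathcal H_{\pi^\omega}=\mathbb C$ the operator product and adjoint reduce to the pointwise product and complex conjugation. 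By the standard fact that any $*$-homomorphism between $C^*$-algebras factors isometrically as $\mathcal A/\ker r\hookrightarrow L^\infty(M\times\mathfrak v^*)$, the image $r(\mathcal A)$ is closed in the uniform norm.

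Next I would exhibit an explicit dense subspace of $\mathcal C_0(M\times \mathfrak v^*)$ already lying in $r(\mathcal A_0)$. Given $\phi\in \mathcal C^\infty(M,\mathcal S(\mathfrak v^*))$, set $\chi_x:=\mathcal F_{\mathfrak v}^{-1}\phi(x,\cdot)\in \mathcal C^\infty(M,\mathcal S(\mathfrak v))$ and fix any $\rho\in \mathcal S(\mathfrak z)$ with $\int_{\mathfrak z}\rho(Z)\,dZ=1$. The factorized kernel $\kappa_x(V,Z):=\chi_x(V)\rho(Z)$ lies in $\mathcal C^\infty(M,\mathcal S(G))$, and using $\pi^\omega(\exp(V+Z))=e^{i\omega(V)}$ together with Fubini one computes
$$\sigma(x,\pi^\omega)=\int_{\mathfrak v}\chi_x(V)e^{-i\omega(V)}dV\cdot \int_{\mathfrak z}\rho(Z)\,dZ=\phi(x,\omega),$$
so $\sigma:=\mathcal F\kappa_\bullet\in \mathcal A_0$ realizes $\phi$ as a restriction; in fact this shows $r(\mathcal A_0)=\mathcal C^\infty(M,\mathcal S(\mathfrak v^*))$. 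Since $\mathcal C_c^\infty(M\times \mathfrak v^*)\subset \mathcal C^\infty(M,\mathcal S(\mathfrak v^*))$ is uniformly dense in $\mathcal C_0(M\times \mathfrak v^*)$, and $r(\mathcal A)$ is closed by the first step and contains this dense subspace, we conclude $r(\mathcal A)\supset \mathcal C_0(M\times \mathfrak v^*)$, completing the proof.

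The only genuinely delicate point is the closedness of $r(\mathcal A)$ in the uniform norm. Avoiding the $C^*$-algebraic fact would require building, for each $\phi\in \mathcal C_0$, a lift into $\mathcal A$ whose full $L^\infty(M\times \widehat G)$-norm is controlled by $\|\phi\|_\infty$, which is not obvious because the naive tensor lift $\kappa_x=\chi_x\otimes\rho$ only yields the bound $\|\sigma(x,\pi^{\lambda,\nu})\|_{\mathrm{op}}\leq \|\chi_x\|_{L^1(\mathfrak v)}\|\rho\|_{L^1(\mathfrak z)}$ on $\widehat G_\infty$, and $\|\chi_x\|_{L^1}$ is not controlled by $\|\phi\|_\infty$. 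The $C^*$-algebra closed-range theorem bypasses the construction of a bounded section entirely.
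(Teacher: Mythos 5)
Your proof is correct and takes essentially the same route as the paper: the factorized kernel $\chi_x\otimes\rho$ is exactly the paper's $\kappa^{(j)}_x\eta$, and the paper too relies — tacitly — on the closed-range property of the restriction $*$-homomorphism (it only surfaces as a remark in the paragraph following the lemma, where it is asserted that $\bar S|_{M\times\widehat G_1}$ equals the sup-norm closure of $S|_{M\times\widehat G_1}$). Your explicit treatment of that step, and in particular the observation that no naive tensor lift can be norm-controlled because $\|\chi_x\|_{L^1(\mathfrak v)}$ is not dominated by $\|\phi\|_\infty$, is a genuine clarification of what the paper leaves implicit; one small point worth flagging in your first paragraph is that the pointwise bound $|\sigma(x,\pi^\omega)|\le\|\sigma\|_{L^\infty(M\times\widehat G)}$ for $\sigma\in\mathcal A_0$ is itself not completely formal, since the Plancherel measure does not charge $\widehat G_1$ — it holds because $G$ is amenable, so every $\pi^\omega$ is weakly contained in the regular representation and the full and reduced $C^*$-norms on $L^1(G)$ agree.
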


\begin{proof}
Any element of $\mathcal C_{0}(M\times \mathfrak v^{*})$ may be viewed as  a limit for the supremum norm on $M\times \mathfrak v^{*}$ of  $\mathcal F_{\mathfrak v}\kappa^{(j)}_{x}(\omega)$ for a sequence of  kernels $\kappa^{(j)} \in \mathcal C^{\infty}(M, \mathcal S(\mathfrak v))$. 
We then 
consider the sequence of symbols $\sigma_{j} (x,\pi) = \pi (\kappa^{(j)}_{x}\eta) $
with $\eta\in \mathcal S(\mathfrak z)$ satisfying $\mathcal F_{\mathfrak z}\eta(0)=\int_{\mathfrak z}\eta (Z)dz=1$. We check readily that 
$\sigma_{j} |_{M\times \widehat G_{1}}(x,\omega)=\mathcal F_{\mathfrak v}\kappa^{(j)}_{x}(\omega)$. 	
\end{proof}

For  a subspace $S$ of $\mathcal A$,  we denote by 
 $$
 S|_{M\times \widehat G_{1}}:=\{\sigma |_{M\times \widehat G_{1}}, \sigma \in S\}
 $$
 the resulting subspace in $\mathcal  A|_{M\times \widehat G_{1}}$.
 The proof of Lemma \ref{lem_rest=C0}
 shows that if $\bar S$ denotes the closure of $S$ in the $C^{*}$-algebra $\mathcal A$, then $\bar S|_{M\times \widehat G_{1}}$ is the closure of 
 $S|_{M\times \widehat G_{1}}$ in $\mathcal C_{0}(M\times \mathfrak v^{*})$, 
 that is, given by the supremum norm on $M\times \mathfrak v^{*}$.
 Hence $\bar S|_{M\times \widehat G_{1}} \subset \mathcal  A|_{M\times \widehat G_{1}}$.

 We will need the following property regarding the restriction of the symbols
in $\mathcal A_{0}$ and $\mathcal B_{0}$
 to $M\times \widehat G_{1}$;
 its proof relies on the proof of Proposition \ref{prop_Bdual}:

 \begin{corollary}
 \label{cor_prop_Bdual}
 The following commutative $C^{*}$ algebras coincide:
$$
\bar {\mathcal B}_{0} |_{M\times \widehat G_{1}} = \mathcal  B|_{M\times \widehat G_{1}} =
\bar {\mathcal  A}_{0}|_{M\times \widehat G_{1}}  = \mathcal  A|_{M\times \widehat G_{1}}=\mathcal C_{0}(M\times \mathfrak v^{*}).
$$
 \end{corollary}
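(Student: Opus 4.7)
The plan is to deduce the corollary from Lemma~\ref{lem_rest=C0} and Proposition~\ref{prop_Bdual} by a short Hahn--Banach duality argument. By definition $\bar{\mathcal A}_0=\mathcal A$ and $\bar{\mathcal B}_0=\mathcal B$, and by the paragraph immediately preceding the corollary, for every subspace $S\subset \mathcal A$ the restriction $\bar S|_{M\times \widehat G_1}$ coincides with the $\mathcal C_0(M\times\mathfrak v^*)$-closure of $S|_{M\times\widehat G_1}$. Combined with Lemma~\ref{lem_rest=C0}, this identifies $\bar{\mathcal A}_0|_{M\times \widehat G_1}=\mathcal A|_{M\times \widehat G_1}=\mathcal C_0(M\times\mathfrak v^*)$ and reduces the entire remaining chain of equalities to the single statement that $\mathcal B_0|_{M\times \widehat G_1}$ is dense in $\mathcal C_0(M\times \mathfrak v^*)$; the inclusion $\mathcal B_0|_{M\times\widehat G_1}\subset \mathcal C_0(M\times\mathfrak v^*)$ is immediate from $\mathcal B_0\subset \mathcal A_0$.

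To establish the density I would argue by contradiction. If $\mathcal B_0|_{M\times\widehat G_1}$ were not dense, Hahn--Banach and Riesz representation would furnish a nonzero (complex) Radon measure $\mu$ on $M\times\mathfrak v^*$ such that
$$
\int_{M\times \mathfrak v^*}\sigma(x,\pi^\omega)\,d\mu(x,\omega)=0, \qquad \forall\, \sigma\in \mathcal B_0.
$$
Write the polar decomposition $d\mu=h\,d|\mu|$ with $|h|=1$ a.e., and promote $\mu$ to an operator-valued measure $\Gamma\,d\gamma\in \mathcal M_{ov}(M\times\widehat G)$ concentrated on $M\times\widehat G_1$ by setting $\gamma=|\mu|$ pushed forward under the Kirillov identification $\widehat G_1\sim \mathfrak v^*$, and $\Gamma(x,\pi^\omega)=h(x,\omega)\,\mathrm{Id}_{\mathcal H_{\pi^\omega}}$. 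Because $\widehat G_1$ consists of one-dimensional representations, each $\pi^\omega(\bP_\zeta)$ is scalar and so the commutation relation $\pi(\bP_\zeta)\Gamma=\Gamma\pi(\bP_\zeta)$ is automatic; thus $\Gamma\,d\gamma\in \mathcal M_{ov}(M\times\widehat G)^{(\widehat{\mathbb L})}$.

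By construction the functional $\ell_{\Gamma d\gamma}\in \mathcal A^*$ vanishes on $\mathcal B_0$, and hence on $\mathcal B$ by continuity of $\ell_{\Gamma d\gamma}$ and density of $\mathcal B_0$ in $\mathcal B$. Proposition~\ref{prop_Bdual} supplies the isometric isomorphism $\Gamma' d\gamma'\mapsto \ell_{\Gamma' d\gamma'}|_{\mathcal B}$ from $\mathcal M_{ov}(M\times \widehat G)^{(\widehat{\mathbb L})}$ onto $\mathcal B^*$, which then forces $\Gamma\,d\gamma=0$ as an operator-valued measure. Since $|\Gamma|=1$ $\gamma$-almost everywhere, this means $\gamma=|\mu|=0$, i.e.\ $\mu=0$, contradicting the choice of $\mu$. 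The required chain of equalities follows.

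The only point requiring a little care is checking that the construction above is admissible inside $\mathcal M_{ov}(M\times\widehat G)$: the Plancherel measure on $\widehat G$ is supported on $\widehat G_\infty$, but the operator-valued measures of the paper are built from arbitrary positive Radon measures $\gamma$ on $M\times\widehat G$ with no absolute continuity requirement, so it is perfectly legitimate to concentrate $\gamma$ on the Plancherel-null set $M\times \widehat G_1$. Beyond this bookkeeping the argument is a direct application of Proposition~\ref{prop_Bdual}, and needs no further information about the internal structure of elements of $\mathcal B_0$.
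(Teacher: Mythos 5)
Your argument is correct and follows essentially the same route as the paper's proof: both reduce the corollary to the equality $\mathcal B|_{M\times \widehat G_1}=\mathcal C_0(M\times\mathfrak v^*)$, both produce a candidate annihilating complex Radon measure on $M\times\mathfrak v^*$ via Hahn--Banach, both promote it to an operator-valued measure concentrated on $M\times\widehat G_1$ (where membership in $\mathcal M_{ov}(M\times\widehat G)^{(\widehat{\mathbb L})}$ is automatic since the $\pi^\omega$ are one-dimensional), and both invoke the injectivity from Proposition~\ref{prop_Bdual} to conclude the measure is zero. Your writeup is slightly more explicit than the paper's at two points, namely the polar decomposition $d\mu = h\,d|\mu|$ used to produce the pair $(\Gamma,\gamma)$, and the remark that $\pi^\omega(\bP_\zeta)$ is scalar so the commutation condition is vacuous; these are exactly the small checks the paper leaves to the reader.
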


\begin{proof}
Clearly, $\bar {\mathcal B}_{0} |_{M\times \widehat G_{1}}
=  \mathcal  B|_{M\times \widehat G_{1}} \subset 
\bar {\mathcal  A}_{0}|_{M\times \widehat G_{1}} = \mathcal  A|_{M\times \widehat G_{1}}=\mathcal C_{0}(M\times \mathfrak v^{*})$.
It remains to show  the converse inequality. 

Let $\ell $ be a continuous linear functional on $\mathcal C_{0}(M\times \mathfrak v^{*})$.
This is given by integration against a complex Radon measure $\gamma_{1}$.
Consider the operator-valued measure $\Gamma d\gamma\in \mathcal M_{ov}(M\times\widehat G)$ defined by
$1_{M\times\widehat G_{\infty}}\Gamma d\gamma=0$ and 
$1_{M\times\widehat G_1}\Gamma d\gamma=\gamma_{1}$, 
that is, 
$$
\ell_{\Gamma d\gamma}(\sigma) = \int_{M\times \mathfrak v^{*}}
\sigma |_{M\times \widehat G_{1}}
 (x,\pi^\omega) \ d\gamma_{1}(\omega),
 \quad \sigma\in \mathcal A.
$$
We observe that  $\Gamma$ commutes with $\widehat \bP_{\zeta}$, $\zeta>0$.
Hence, if  $\ell=0$ on $\mathcal  B|_{M\times \widehat G_{1}}$ then   $\ell_{\Gamma d\gamma}\equiv 0$ on  $\mathcal  B$
and therefore also on $\mathcal A$ 
 by Proposition \ref{prop_Bdual}, or rather Step 2 of its proof;
 this implies $\Gamma d\gamma =0$ thus $\gamma_1=0$ and $\ell=0$.
 By the Hahn-Banach theorem, this shows that $\mathcal  B|_{M\times \widehat G_{1}} =\mathcal C_{0}(M\times \mathfrak v^{*}) $.
\end{proof}

\subsubsection{Some elements of $vN\mathcal A$ and $vN\mathcal B$}
We will need the following properties:
\begin{lemma}
\label{lem_vNstuff}
If $\sigma \in \mathcal A_0$
then ${\bf 1}_{M\times \widehat G_1} \sigma$
and  ${\bf 1}_{M\times \widehat G_\infty} \sigma$
are in $vN\mathcal A$.
Similarly, if $\sigma \in \mathcal B_0$
then ${\bf 1}_{M\times \widehat G_1} \sigma$
and  ${\bf 1}_{M\times \widehat G_\infty} \sigma$
are in $vN\mathcal B$.
\end{lemma}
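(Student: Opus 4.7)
The strategy is to approximate ${\bf 1}_{M\times \widehat G_\infty}\sigma$ (and hence ${\bf 1}_{M\times \widehat G_1}\sigma$) by elements of $\mathcal A_0$ using the multiplication of a symbol by a Schwartz function of $\lambda\in \mathfrak z^*$ furnished by Lemma~\ref{lem_sigmaeta}. Concretely, I fix a sequence $(\eta_n)_{n\in\N}\subset \mathcal S(\mathfrak z^*)$ satisfying $0\leq \eta_n\leq 1$, $\eta_n(0)=0$, and $\eta_n(\lambda)\to 1$ pointwise for every $\lambda\in \mathfrak z^*\setminus\{0\}$; an explicit choice is $\eta_n(\lambda)=(1-e^{-n|\lambda|^2})e^{-|\lambda|^2/n^2}$. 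By Lemma~\ref{lem_sigmaeta}, $\sigma\eta_n\in \mathcal A_0$ whenever $\sigma\in \mathcal A_0$, and $\sigma\eta_n\in \mathcal B_0$ whenever $\sigma\in \mathcal B_0$.

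Next I identify the pointwise limit of $\sigma\eta_n$ on $M\times \widehat G$. On the infinite-dimensional stratum $\widehat G_\infty$, the formula in Lemma~\ref{lem_sigmaeta} gives $(\sigma\eta_n)(x,\pi^{\lambda,\nu})=\sigma(x,\pi^{\lambda,\nu})\eta_n(\lambda)\to \sigma(x,\pi^{\lambda,\nu})$. On the one-dimensional stratum $\widehat G_1$, I evaluate the group Fourier transform of the kernel $\kappa_x*_{\mathfrak z}\mathcal F_{\mathfrak z}^{-1}\eta_n$ at $\pi^\omega$: since $\pi^\omega$ depends only on the $\mathfrak v$-part, integration along $\mathfrak z$ factorizes the convolution, producing the scalar factor $\int_{\mathfrak z}\mathcal F_{\mathfrak z}^{-1}\eta_n(z)\,dz = \eta_n(0)$, so $(\sigma\eta_n)(x,\pi^\omega)=\eta_n(0)\sigma(x,\pi^\omega)=0$. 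Therefore $\sigma\eta_n\to {\bf 1}_{M\times \widehat G_\infty}\sigma$ pointwise on $M\times \widehat G$, uniformly bounded by $\|\sigma\|_{L^\infty(M\times \widehat G)}$.

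The conclusion follows from the definition of $vN\mathcal A$ as the strong-operator closure of $\mathcal A$ inside the von Neumann algebra $L^\infty(M\times \widehat G)$: by dominated convergence a uniformly bounded, pointwise-convergent sequence is weak-$*$ convergent in $L^\infty$, and for a convex $*$-subalgebra the weak-$*$ and strong-operator closures coincide. Hence ${\bf 1}_{M\times \widehat G_\infty}\sigma \in vN\mathcal A$, and writing ${\bf 1}_{M\times \widehat G_1}\sigma=\sigma-{\bf 1}_{M\times \widehat G_\infty}\sigma$ places it in $vN\mathcal A$ as well. The identical argument, with $\mathcal B_0$ in place of $\mathcal A_0$ and $vN\mathcal B$ in place of $vN\mathcal A$, yields the second assertion.

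The main step requiring care is the pointwise computation on $\widehat G_1$: it is the vanishing $\eta_n(0)=0$ of the Schwartz multiplier that forces the $\widehat G_1$-contribution of each approximant to vanish identically, isolating the $\widehat G_\infty$-part in the limit. Once that is verified, everything else is a routine passage to a von Neumann limit.
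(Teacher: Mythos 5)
Your proof is correct and follows essentially the same approach as the paper, which likewise approximates by $\sigma\eta_n$ via Lemma~\ref{lem_sigmaeta} and passes to the strong-operator limit. The only cosmetic difference is the choice of approximating sequence: the paper takes $\eta_n(0)=1$ with support shrinking to $\{0\}$, so that $\sigma\eta_n\to {\bf 1}_{M\times \widehat G_1}\sigma$ and one gets ${\bf 1}_{M\times \widehat G_\infty}\sigma$ by subtraction, whereas you take the complementary choice $\eta_n(0)=0$, $\eta_n\to 1$ off the origin and subtract in the other order.
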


\begin{proof}
We consider $\sigma \eta $ as in Lemma \ref{lem_sigmaeta} with a sequence of functions $\eta\in \mathcal S(\mathfrak z^*)$ satisfying $\eta(0)=1$ and with support shrinking to $\{0\}$.
We check readily that if $\sigma\in \mathcal A_0$ then the limit of these $\sigma \eta $ for the strong operator topology will be 
${\bf 1}_{M\times \widehat G_1} \sigma$ which is therefore  in $vN\mathcal A$.
It will also be the case for 
  ${\bf 1}_{M\times \widehat G_\infty} \sigma = \sigma - {\bf 1}_{M\times \widehat G_1}\sigma$.
The case of $\mathcal B_0$ follows.
\end{proof}

\subsection{The lowering and raising operators associated with $H(\lambda)$. }
\label{subsec_opW}

\subsubsection{Preliminaries}
\label{subsubsec_preltensor}
Before proving several useful identities, we introduce some notations. 
If $\pi_1,\pi_2$ are two representations of $\mathfrak g$, and $A:\mathfrak v\to \mathfrak v$ is a linear morphism, then we set
$$
(A\pi_1(V))\cdot \pi_2(V) = \sum_{j,k} A_{j,k} \pi_1(V_k) \otimes  \pi_2(V_j) \in \mathcal H_{\pi_1} \otimes \mathcal H_{\pi_2} 
$$
where $(A_{j,k})$ is the matrix representing $A$ in the orthonormal basis $(V_j)$.
We can check that this is independent of the orthonormal basis $(V_j)$.
If the context is clear, we may allow ourselves to omit the notation for the tensor product $\otimes$ and may swap the order in the tensor product.

With $A={\rm id}_{\mathfrak v}$, $\pi_1$ being the regular representation of $\mathfrak g$ on $L^2(M)$ and $\pi_2=\pi\in \widehat G$, this yields the super-operator  $V \cdot \pi(V)$ acting on $\mathcal A_0$. 
If we restrict this to $M\times \widehat G_1$, i.e.  $\pi_2=  \pi^\omega \in \widehat G_1$, this defines $\omega \cdot V$ acting on $\mathcal C^\infty (M,\mathcal S(\mathfrak v^*) )\sim \mathcal A_0|_{M\times \widehat G_1} $.

  \subsubsection{Technical computations}
  \label{subsubsec_comput}
Here, we  assume that $k=0$ and consider $\lambda\in \Omega_0$.
Following Appendix~B in~\cite{FF3}, instead of the basis $P^\lambda_j,Q^\lambda_j$, $1\leq j\leq d$, 
we will use the fields
\begin{equation}\label{def:Rj}
W_j^\lambda := \frac 12 (P_j^\lambda-iQ_j^\lambda)\qquad\mbox{and} \qquad
\overline W_j^\lambda :=\frac 12 (P_j^\lambda +iQ_j^\lambda).  
\end{equation}
{Direct computations show
using equation~\eqref{eq:pi(P)},
$\pi(P^\lambda_j)=\sqrt{\eta_j(\lambda)} \partial_{\xi_j}$ and
$\pi(Q^\lambda_j)=i\sqrt{\eta_j(\lambda)} \xi_j$, so we obtain}
\[
\pi^\lambda(W_j^\lambda)= \frac{\sqrt{\eta_j(\lambda)}}2 (\partial_{\xi_j} +\xi_j) \qquad\mbox{and}\qquad \pi^\lambda (\overline W_j^\lambda)= \frac{\sqrt{\eta_j(\lambda)}}2 (\partial_{\xi_j} -\xi_j).
\] 
In particular, 
these new fields coincide up to normalisation with   the lowering and raising operators of the harmonic oscillators $(-\partial_{\xi_j}^2+\xi_j^2)$. 
Consequently,  the family of Hermite functions $(h_\alpha)_{\alpha\in\N^d}$ given by 
{
$$
 h_\alpha (\xi_1,\ldots, \xi_d) = 
 h_{\alpha_1}(\xi_1) \ldots h_{\alpha_d}(\xi_d), 
 \quad\mbox{where}\quad
 h_n(\xi) = \frac{(-1)^n}{\sqrt{2^n n! \sqrt \pi}}
 e^{\frac {\xi^2}2}
 \frac{d }{d\xi} ( e^{-\xi^2}), \quad n\in \N, 
 $$
 }
 is an orthonormal basis of~$L^2(\R^d) $ that satisfies:
 \begin{equation}
 \label{eq_Wh}
 	\pi^\lambda (W_j^\lambda) h_\alpha
= \sqrt{\frac{\eta_j(\lambda)}2 }
\sqrt{\alpha_j} h_{\alpha-{\bf 1}_j}
\qquad
\pi^\lambda(\overline W_j^\lambda) h_\alpha
= -\sqrt{\frac{\eta_j(\lambda)}2}
\sqrt{\alpha_j+1} h_{\alpha+{\bf 1}_j}.
 \end{equation}
Here, ${\bf 1}_j$  denotes the multi-index with $j$-th coordinate $1$ and $0$ elsewhere.
We also have extended the notation $h_\alpha$ to $\alpha\in \mathbb Z^d$ with $h_\alpha=0$ if $\alpha \notin \N^d$. 
{We then deduce easily
\begin{equation}
	\label{eq:zorro2}
    \left[\pi^\lambda (W_j^\lambda),  \pi^\lambda (-\mathbb L)\right]
    =2\eta_j(\lambda) \pi^\lambda (W_j^\lambda)
    \quad\mbox{and}\quad 
      \left[\pi^\lambda (\overline W_j^\lambda), \pi^\lambda (-\mathbb L)\right]
      =-2\eta_j(\lambda)\pi^\lambda (\overline W_j^\lambda).
\end{equation}
and  that both the operators
$\widehat {\mathbb P} _\zeta \pi(W^\lambda_j )\widehat {\mathbb P}_\zeta $ and $\widehat {\mathbb P} _\zeta \pi(\overline W^\lambda_j )\widehat {\mathbb P}_\zeta $ are zero. Consequently, we also have}
\begin{equation} \label{non_com_PQ}
\widehat {\mathbb P} _\zeta \pi(P^\lambda_j )\widehat {\mathbb P}_\zeta =0 \;\;\mbox{and}\;\;
\widehat {\mathbb P}_\zeta \pi(Q^\lambda_j )\widehat {\mathbb P}_\zeta =0,\;\forall \lambda\in \mathfrak z^*\setminus\{0\},\;\forall j\in\{1,\cdots, d\},\;
\forall \zeta\in\R,
\end{equation}

\medskip 

 Following the ideas and notation from Section \ref{subsubsec_preltensor}, 
we define the operator
$$
T= \frac i 2  (B(\lambda)^{-1}V \cdot \pi^\lambda(V)), 
\quad 
\lambda \in \Omega_0,
$$
acting on the space of symbols in $\mathcal A_0$ restricted to $M\times \Omega_0$. This may also be viewed as acting on 
the space of symbols in $\mathcal A_0$
which are supported in $M\times \Omega_0$.
The properties above imply:

\begin{lemma}\label{lem:computation}
\begin{enumerate}
\item 
For any $\sigma\in \mathcal A_0$,
we have on $M\times \Omega_0$:
$$
[T \sigma , \pi(-\mathbb L)] 
=\pi(V)\cdot V \sigma
$$
\item
For any $\lambda\in \Omega_0 $ and  $\zeta>0$,  
using the shorthand $\pi(\bP_\zeta) $ for ${\rm id}_{L^2(G)} \otimes \pi(\bP_\zeta)$, 
we have 
$$
\pi^\lambda(\bP_\zeta)
 \Big(V \cdot \pi^\lambda(V))\circ T \Big)
\pi^\lambda(\bP_\zeta) =
\frac{1} 4 \mathbb L - \frac i4 \sum_{j=1}^d (2\alpha_j+1) [P_j^\lambda,Q_j^\lambda
]
\pi^\lambda(\bP_\zeta).
$$
\end{enumerate}
\end{lemma}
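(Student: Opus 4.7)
Both identities rest on three ingredients: (i) the description of the matrix of $B(\lambda)$ in the orthonormal basis $(P_j^\lambda, Q_j^\lambda)_{j=1}^d$ of $\mathfrak v$ on $\Omega_0$ (where the radical is trivial), from which I extract $B(\lambda)P_j^\lambda = \eta_j Q_j^\lambda$, $B(\lambda)Q_j^\lambda = -\eta_j P_j^\lambda$ (being careful that \eqref{eq_matB} displays the bilinear form, whose linear-map counterpart is its negative transpose for skew forms), and hence $B(\lambda)^{-1}P_j^\lambda = -\eta_j^{-1}Q_j^\lambda$, $B(\lambda)^{-1}Q_j^\lambda = \eta_j^{-1}P_j^\lambda$; (ii) the commutators $[\pi^\lambda(P_j^\lambda), \pi^\lambda(-\mathbb L)] = -2i\eta_j\pi^\lambda(Q_j^\lambda)$ and $[\pi^\lambda(Q_j^\lambda), \pi^\lambda(-\mathbb L)] = 2i\eta_j \pi^\lambda(P_j^\lambda)$, obtained by writing $P_j^\lambda = W_j^\lambda + \overline W_j^\lambda$, $Q_j^\lambda = i(W_j^\lambda - \overline W_j^\lambda)$ and applying \eqref{eq:zorro2}; (iii) for part~(2), the vanishing \eqref{non_com_PQ} of $\pi^\lambda(\bP_\zeta)\pi^\lambda(V_j)\pi^\lambda(\bP_\zeta)$ together with the raising/lowering rules \eqref{eq_Wh} for the Hermite basis.

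Part~(1) is then a one-line unpacking: using (i) one rewrites
\[
 T\sigma = \frac{i}{2}\sum_{j=1}^d \eta_j^{-1}\bigl(\pi^\lambda(P_j^\lambda)(Q_j^\lambda \sigma) - \pi^\lambda(Q_j^\lambda)(P_j^\lambda \sigma)\bigr).
\]
Since the derivatives $V_k\sigma$ commute with $\pi^\lambda(-\mathbb L)$ (as $\sigma$ does, in the intended application to $\sigma \in \mathcal B_0$), the commutator $[T\sigma, \pi^\lambda(-\mathbb L)]$ reduces to $\tfrac{i}{2}\sum_{j,k}(B(\lambda)^{-1})_{j,k}[\pi^\lambda(V_j), \pi^\lambda(-\mathbb L)](V_k\sigma)$; substituting (ii), the $\eta_j$ factors cancel, and one obtains $\sum_{j=1}^d(\pi^\lambda(P_j^\lambda)(P_j^\lambda\sigma) + \pi^\lambda(Q_j^\lambda)(Q_j^\lambda\sigma)) = \pi^\lambda(V)\cdot V\sigma$.

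For part~(2), expand
\[
 (V\cdot \pi^\lambda(V))(T\sigma) = \tfrac{i}{2}\sum_{m, j, k}(B(\lambda)^{-1})_{j,k}\,\pi^\lambda(V_m)\pi^\lambda(V_j)\,V_m V_k\sigma
\]
and sandwich with $\pi^\lambda(\bP_\zeta)$. Inserting $\sum_{\zeta'}\pi^\lambda(\bP_{\zeta'}) = I$ between $\pi^\lambda(V_m)$ and $\pi^\lambda(V_j)$ and applying (iii), the raising/lowering rules force both $V_j$ and $V_m$ to belong to the same symplectic pair $\{P_l^\lambda, Q_l^\lambda\}$ (on $\Lambda_0$, where the $\eta_j(\lambda)$ are distinct, this is the only way the eigenvalue shifts cancel). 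Using either the explicit oscillator formulas $\pi^\lambda(P_l^\lambda)^2 = \eta_l\partial_{\xi_l}^2$, $\pi^\lambda(Q_l^\lambda)^2 = -\eta_l \xi_l^2$ or \eqref{eq_Wh} directly, one checks
\begin{align*}
\pi^\lambda(\bP_\zeta)\pi^\lambda(P_l^\lambda)^2\pi^\lambda(\bP_\zeta) &= \pi^\lambda(\bP_\zeta)\pi^\lambda(Q_l^\lambda)^2\pi^\lambda(\bP_\zeta) = -\tfrac{\eta_l(2\alpha_l+1)}{2}\pi^\lambda(\bP_\zeta),\\
\pi^\lambda(\bP_\zeta)\pi^\lambda(P_l^\lambda)\pi^\lambda(Q_l^\lambda)\pi^\lambda(\bP_\zeta) &= \tfrac{i\eta_l}{2}\pi^\lambda(\bP_\zeta), \quad \pi^\lambda(\bP_\zeta)\pi^\lambda(Q_l^\lambda)\pi^\lambda(P_l^\lambda)\pi^\lambda(\bP_\zeta) = -\tfrac{i\eta_l}{2}\pi^\lambda(\bP_\zeta).
\end{align*}
Substituting these back into the expansion and combining with the formula for $T$, the diagonal terms $(V_j, V_m) \in \{(P_l^\lambda, P_l^\lambda), (Q_l^\lambda, Q_l^\lambda)\}$ assemble into $\tfrac{1}{4}\sum_l(P_l^2 + Q_l^2)\sigma\, \pi^\lambda(\bP_\zeta) = \tfrac{1}{4}\mathbb L\sigma\, \pi^\lambda(\bP_\zeta)$, while the off-diagonal $(P_l^\lambda, Q_l^\lambda)$ and $(Q_l^\lambda, P_l^\lambda)$ terms collect (through $P_l Q_l - Q_l P_l = [P_l^\lambda, Q_l^\lambda]$) into $-\tfrac{i}{4}\sum_l(2\alpha_l+1)[P_l^\lambda, Q_l^\lambda]\sigma\,\pi^\lambda(\bP_\zeta)$, which is the claim.

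The main obstacle is not analytic but notational: keeping track of the conventions in the super-operator $(A\pi_1(V))\cdot \pi_2(V)$ of Section~\ref{subsubsec_preltensor}, the side on which $\pi^\lambda(V_j)$ multiplies $V_k\sigma$, and the sign ambiguity between the bilinear form \eqref{eq_matB} and its associated linear map. A more substantive point is that the off-diagonal cancellation $\pi^\lambda(\bP_\zeta)\pi^\lambda(P_a^\lambda)\pi^\lambda(Q_b^\lambda)\pi^\lambda(\bP_\zeta) = 0$ for $a \ne b$ requires $\eta_a(\lambda) \ne \eta_b(\lambda)$, i.e.\ implicitly restricts to $\Lambda_0$; a full treatment on $\Omega_0$ would require grouping the $(P_l^\lambda, Q_l^\lambda)$ pairs according to the multiplicities of the eigenvalues of $B(\lambda)^2$ and checking that the extra intra-multiplicity cross-terms assemble, as before, into the Lie bracket $\sum_l [P_l^\lambda, Q_l^\lambda]$.
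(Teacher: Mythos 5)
Your proof is correct and mirrors the paper's: it is the same ladder-operator algebra, expressed in the commutators $[\pi^\lambda(P_j^\lambda),\pi^\lambda(-\mathbb L)]$, $[\pi^\lambda(Q_j^\lambda),\pi^\lambda(-\mathbb L)]$ rather than directly in $W_j^\lambda,\overline W_j^\lambda$ as in the paper, with Part~(2) reduced to the same diagonal contributions by sandwiching with $\pi^\lambda(\bP_\zeta)$. Your closing observation is sharp and worth recording: the paper's Part~(2) proof, like yours, silently discards the $j_1\neq j_2$ cross-terms under the sandwich, which is automatic only when the $\eta_j(\lambda)$ are pairwise distinct, whereas the lemma is stated for all $\lambda\in\Omega_0$; in the repeated-eigenvalue case the grouping-by-multiplicity argument you sketch would indeed be required.
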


\begin{proof}
 Since 
$P_j^\lambda= \overline W_j^\lambda + W_j^\lambda$, 
and $Q_j^\lambda= \frac 1{i}(\overline W_j^\lambda - W_j^\lambda),$
we deduce for $\pi=\pi^\lambda$, $\lambda\in\Omega_0$,
\begin{align*}
	    V\cdot \pi(V)  
	    &=
     2 \sum_{j=1}^d \left(W^\lambda_j   \pi(\overline W^\lambda_j) + 
    \overline W^\lambda_j  
    \pi(W^\lambda_j) \right).
    \end{align*}
 As $B(\lambda) Q_j^\lambda=\eta_j(\lambda) P^\lambda_j$ and $B(\lambda)P^\lambda_j=-\eta_j(\lambda) Q^\lambda_j$,  we obtain
    \begin{align*}
    (B(\lambda)^{-1}V) \cdot \pi(V)
    &= \sum_{j}^d \frac 1{\eta_j} \left(-P_j^\lambda\pi(Q_j^\lambda)+ Q_j^\lambda \pi(P_j^\lambda)  \right)=\frac 2 i
\sum_{j=1}^d \frac 1{\eta_j} \left(\overline W_j^\lambda \pi(W_j^\lambda)-W_j^\lambda\pi(\overline W_j^\lambda) \right).
\end{align*}
By \eqref{eq:zorro2}, we check readily 
Part (1).

\smallskip

For Part (2), 
we may assume that 
$\pi^\lambda(\bP_\zeta)\neq0$, that is, $\zeta$ is in the spectrum of the harmonic oscillator $\pi^\lambda (\mathbb L)$, or in other words $\zeta=\sum_j \eta_j(\lambda)(2\alpha_j+1)$ for some $\alpha\in \N^d$.
For any such index $\alpha$
and for an arbitrary vector $w_1\in \mathcal S(G)$,
by the computations above and \eqref{eq_Wh}, we see with $\pi=\pi^\lambda$:
\begin{align*}
& \pi(\bP_\zeta)  \big (V \cdot \pi(V)\big) \circ
\big(B(\lambda)^{-1}V \cdot \pi(V)\big) w_1\otimes h_\alpha
\\
&\qquad =
\frac 4i \sum_{j_1,j_2} \eta_{j_2}^{-1} \pi(\bP_\zeta) 
\big(W_{j_1}^\lambda \pi( \overline W_{j_1}^\lambda ) + 
\overline W_{j_1}^\lambda \pi(  W_{j_1}^\lambda )\big)
\big(\overline W_{j_2}^\lambda \pi(  W_{j_2}^\lambda ) - 
W_{j_2}^\lambda \pi(\overline   W_{j_2}^\lambda )\big)
w_1\otimes h_\alpha
\\
&\qquad =
 \frac 2i   \sum_{j}  \left(
\overline W_j^\lambda W_j^\lambda
(\alpha_j+1) -
W_j^\lambda\overline W_j^\lambda 
\alpha_j \right) 
w_1\otimes h_\alpha.
\end{align*}
We can simplify each term in the sum above with:
$$
\overline W_j^\lambda W_j^\lambda
(\alpha_j+1) -
W_j^\lambda\overline W_j^\lambda 
\alpha_j
=	
 \frac 14 ((P_j^\lambda)^2 + (Q_j^\lambda)^2 )
- \frac i 4 (2\alpha_j+1) [P_j^\lambda,Q_j^\lambda].
$$
Part (2) follows  
\end{proof}

{
We recall that the maps $\lambda\mapsto \eta_j(\lambda)$, $j=1,\ldots, d$,  are smooth in $\Lambda_0$. Moreover,  if $\lambda_0\in \Lambda_0$, one can choose the vectors 
$P_j^\lambda,Q_j^\lambda$, $j=1,\ldots, d$, so that they depend smoothly on $\lambda$ in a neighborhood of $\lambda_0$. We then have the following result.
\begin{lemma}\label{lem:[P,Q]}
Let $P_j^\lambda,Q_j^\lambda$, $j=1,\ldots, d$, be smooth eigenvectors in an open subset $U$ of $U$. Then 
we have
$$
	[P_j^\lambda,Q_j^\lambda] = \nabla_\lambda \eta_j(\lambda) \in \mathfrak z, \qquad j=1,\ldots, d, \quad \lambda\in \Lambda_0.
	$$
\end{lemma}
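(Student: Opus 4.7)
The strategy is a Hellmann--Feynman style differentiation of the scalar function $\eta_j(\lambda)$, using that the eigenvectors $P_j^\lambda, Q_j^\lambda$ are orthonormal.

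First, I would record the defining identity. Since $B(\lambda)(U,V) = \lambda([U,V]) = \langle U, B(\lambda)V\rangle$ on $\mathfrak v$, and since by the normal form the basis has been chosen so that $B(\lambda)Q_j^\lambda = \eta_j(\lambda) P_j^\lambda$ and $B(\lambda)P_j^\lambda = -\eta_j(\lambda) Q_j^\lambda$, orthonormality gives
\begin{equation*}
\eta_j(\lambda) = \langle P_j^\lambda, B(\lambda)Q_j^\lambda\rangle = \lambda\bigl([P_j^\lambda, Q_j^\lambda]\bigr).
\end{equation*}
Next, for $\mu\in \mathfrak z^*$, I would differentiate this identity along the line $\lambda + t\mu$ at $t=0$. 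The Leibniz rule yields three contributions:
\begin{equation*}
\frac{d}{dt}\bigg|_{t=0} \eta_j(\lambda+t\mu)
= \mu\bigl([P_j^\lambda, Q_j^\lambda]\bigr)
+ \lambda\bigl([\dot P_j^\lambda, Q_j^\lambda]\bigr)
+ \lambda\bigl([P_j^\lambda, \dot Q_j^\lambda]\bigr),
\end{equation*}
where dots denote the derivative in the direction $\mu$ (these make sense because, on the open set $U\subset \Lambda_0$ where the eigenvectors were assumed to depend smoothly on $\lambda$, the basis is differentiable).

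The core of the argument is to show that the last two terms vanish. Rewriting them via $B(\lambda)$,
\begin{equation*}
\lambda\bigl([\dot P_j^\lambda, Q_j^\lambda]\bigr) = \langle \dot P_j^\lambda, B(\lambda)Q_j^\lambda\rangle = \eta_j(\lambda)\langle \dot P_j^\lambda, P_j^\lambda\rangle,
\end{equation*}
and similarly, using the skew-symmetry of $B(\lambda)$,
\begin{equation*}
\lambda\bigl([P_j^\lambda, \dot Q_j^\lambda]\bigr) = \langle P_j^\lambda, B(\lambda)\dot Q_j^\lambda\rangle = -\langle B(\lambda)P_j^\lambda, \dot Q_j^\lambda\rangle = \eta_j(\lambda)\langle Q_j^\lambda, \dot Q_j^\lambda\rangle.
\end{equation*}
Both scalar products vanish by differentiating $\|P_j^\lambda\|^2 = \|Q_j^\lambda\|^2 = 1$. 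Hence
\begin{equation*}
d\eta_j|_\lambda(\mu) = \mu\bigl([P_j^\lambda, Q_j^\lambda]\bigr),
\end{equation*}
which, under the identification of $\mathfrak z$ with $(\mathfrak z^*)^*$ via the scalar product, is precisely the statement $\nabla_\lambda \eta_j(\lambda) = [P_j^\lambda, Q_j^\lambda]$.

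The only subtle point I would need to address is the existence of smooth eigenvectors itself: this is granted by hypothesis on the open subset $U\subset \Lambda_0$, and is guaranteed (up to further shrinking $U$) by the implicit function theorem since on $\Lambda_0$ the eigenvalues of $B(\lambda)^2$ have locally constant multiplicities and depend smoothly on $\lambda$, as already noted in Section~\ref{subsubsec_Omegak}. The potential ambiguity in the choice of $P_j^\lambda, Q_j^\lambda$ within a two-dimensional eigenspace does not affect the commutator $[P_j^\lambda, Q_j^\lambda]$, which is the expected obstacle but turns out to be harmless since the final formula is manifestly invariant under rotations within this plane.
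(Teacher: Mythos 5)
Your proof is correct and follows essentially the same Hellmann--Feynman argument as the paper: you differentiate the scalar identity $\eta_j = \langle P_j^\lambda, B(\lambda)Q_j^\lambda\rangle = \lambda([P_j^\lambda,Q_j^\lambda])$ directly, whereas the paper first differentiates the eigenvector equation $B(\lambda)Q_j^\lambda=\eta_j(\lambda)P_j^\lambda$ and then pairs with $P_j^\lambda$, but these are the same computation; both use skew-symmetry of $B(\lambda)$ and differentiated orthonormality to kill the terms involving $\dot P_j^\lambda,\dot Q_j^\lambda$.
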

}

\begin{proof}[Proof of Lemma \ref{lem:[P,Q]}]
The differentiation of the equality  $B(\lambda)Q_j^\lambda = \eta_j(\lambda)P_j^\lambda$ with respect to $\lambda$ gives
$$
\forall \lambda'\in \mathfrak z^*
\qquad B(\lambda') Q_j^\lambda + B(\lambda)\, \lambda' \cdot \nabla_\lambda Q_j^\lambda 
=
\lambda' \cdot \nabla_\lambda \eta_j(\lambda)P_j^\lambda
+ \eta_j(\lambda)\,\lambda' \cdot \nabla_\lambda P_j^\lambda.
$$
Taking the scalar product with $P_j^\lambda$ and using $B(\lambda)^t=-B(\lambda)$ with $-B(\lambda) P_j^\lambda=\eta_j(\lambda)Q_j^\lambda$, we obtain:
$$
(B(\lambda') Q_j^\lambda,P_j^\lambda) + 
\eta_j(\lambda)( \lambda' \cdot \nabla_\lambda Q_j^\lambda ,Q_j^\lambda)
=
\lambda' \cdot \nabla_\lambda \eta_j(\lambda)(P_j^\lambda,P_j^\lambda)
+ \eta_j(\lambda)(\lambda' \cdot \nabla_\lambda P_j^\lambda,P_j^\lambda).
$$
Now,  $(Q_j^\lambda,Q_j^\lambda) = 1= (P_j^\lambda,P_j^\lambda)$.
Differentiating this  with respect to $\lambda$ 
yields $( \lambda' \cdot \nabla_\lambda Q_j^\lambda ,Q_j^\lambda)=0=
(\lambda' \cdot \nabla_\lambda P_j^\lambda,P_j^\lambda)$, and we have for all $\lambda'\in \mathfrak z^*$
$$
(B(\lambda') Q_j^\lambda,P_j^\lambda) =
\lambda' \cdot \nabla_\lambda \eta_j(\lambda).
$$
Since the left-hand side is equal to $\lambda'([Q_j^\lambda,P_j^\lambda])$ by definition of $B(\lambda')$, 
the conclusion follows. 
\end{proof}

The two lemmata above imply readily:

\begin{corollary}\label{cor:[P,Q]}
Using $\zeta=\zeta(\alpha,\lambda)=\sum_{j=1}^d\eta_j(\lambda) (2\alpha_j+1)$, we deduce that for the choice of orthonormal basis of Lemma~\ref{lem:[P,Q]}, we have 
$$
\pi^\lambda(\bP_\zeta)
 \Big(V \cdot \pi^\lambda(V))\circ T \Big)
\pi^\lambda(\bP_\zeta) =
\frac{1} 4 \mathbb L - \frac i4 \nabla_\lambda \zeta.
$$
\end{corollary}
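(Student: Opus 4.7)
The plan is to simply combine the two lemmas that immediately precede the corollary, with the gradient entering through linearity in $\lambda$. Starting from the identity supplied by Lemma~\ref{lem:computation}(2),
$$
\pi^\lambda(\bP_\zeta)\bigl(V\cdot\pi^\lambda(V)\circ T\bigr)\pi^\lambda(\bP_\zeta)
=\frac14\,\mathbb L-\frac i4\sum_{j=1}^d(2\alpha_j+1)[P_j^\lambda,Q_j^\lambda]\,\pi^\lambda(\bP_\zeta),
$$
I would apply Lemma~\ref{lem:[P,Q]} to replace each commutator $[P_j^\lambda,Q_j^\lambda]$ with $\nabla_\lambda\eta_j(\lambda)\in\mathfrak z$, and then recognise, by linearity of the gradient in $\lambda$, that
$$
\sum_{j=1}^d(2\alpha_j+1)\,\nabla_\lambda\eta_j(\lambda)
=\nabla_\lambda\Bigl(\sum_{j=1}^d(2\alpha_j+1)\eta_j(\lambda)\Bigr)
=\nabla_\lambda\zeta(\alpha,\lambda).
$$
Plugging this into the previous display yields the claimed formula.

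The one subtlety to address is that Lemma~\ref{lem:[P,Q]} requires the eigenvectors $P_j^\lambda,Q_j^\lambda$ of $B(\lambda)$ to be chosen smoothly in $\lambda$; this is precisely why we restrict to $\Lambda_0$, where the eigenvalues of $B(\lambda)^2$ have locally constant multiplicities and the implicit function theorem supplies a smooth local frame. Since the corollary invokes exactly this basis by hypothesis, the substitution is legitimate. The residual factor $\pi^\lambda(\bP_\zeta)$ appearing on the right-hand side of Lemma~\ref{lem:computation}(2) is harmless: $\nabla_\lambda\zeta$ lies in $\mathfrak z$ and, viewed as a central element acting by the representation, commutes with the spectral projector, so the statement of the corollary is to be read as an equality of operators on the range of $\pi^\lambda(\bP_\zeta)$.

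I do not expect any real obstacle: the corollary is stated as an immediate algebraic consequence of the two preceding lemmas, and all the genuine work — the computation of $\pi^\lambda(\bP_\zeta)(V\cdot\pi^\lambda(V))\circ T\,\pi^\lambda(\bP_\zeta)$ and the identification $[P_j^\lambda,Q_j^\lambda]=\nabla_\lambda\eta_j(\lambda)$ — has already been carried out. The only step is the linear-combination-into-gradient identity above, which is purely formal.
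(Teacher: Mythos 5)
Your proof is correct and matches the paper's intent exactly: the paper introduces the corollary with "The two lemmata above imply readily," and your argument is precisely that combination—substitute $[P_j^\lambda,Q_j^\lambda]=\nabla_\lambda\eta_j(\lambda)$ from Lemma~\ref{lem:[P,Q]} into the display of Lemma~\ref{lem:computation}(2) and use linearity of $\nabla_\lambda$ to recognize $\nabla_\lambda\zeta$. Your remark about the residual $\pi^\lambda(\bP_\zeta)$ factor correctly explains the slight notational compression in the corollary's statement.
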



\begin{thebibliography}{99}
  
  \bibitem{BFG1} 
H. Bahouri, C. Fermanian Kammerer \& I. Gallagher. Phase space analysis on the Heisenberg group.  
 {\it Ast\'erisque}, {345} (2012).




\bibitem{BFG2} 
H. Bahouri, C. Fermanian Kammerer \& I. Gallagher. Dispersive estimates for the Schr\"odinger operator on step 2 stratified Lie Groups, {\it Analysis and PDEs}, 
{9} (2016), 3, p.~545-574.  
  
  \bibitem{BGX} H. Bahouri, P. G{\'e}rard \& C.-J. Xu.  Espaces de Besov et estimations de
Strichartz g{\'e}n{\'e}ra\-lis{\'e}es sur le groupe de Heisenberg,
{\it  Journal d'Analyse Math{\'e}matique} {82} (2000), p.~93--118.

\bibitem{BGMR} F. Baudoin, E. Grong \& G. Molino. H-type foliations  	(arXiv:1812.02563).

\bibitem{rbeals2}
R. Beals, Weighted distribution spaces and pseudodifferential
operators, {\it Journal d'Analyse Math\'ematique}, 39 (1981),
130--187.

\bibitem{bealsgreiner} R. Beals, \& P.  Greiner,   Calculus on Heisenberg manifolds,
{\it Annals of Mathematics Studies}, 119,  Princeton University Press,
Princeton, NJ, 1988. 

\bibitem{brown}
 {I. Brown},
 {Dual topology of a nilpotent {L}ie group},
\emph {Ann. Sci. \'{E}cole Norm. Sup. (4)},
\textbf{6},
 {1973},
pp {407--411}.

\bibitem{BS} N. Burq, \& C. Sun. Time optimal observability for Grushin Schr\"odinger equation
(arXiv:1910.03691).

\bibitem{christ_etc}
 {M. Christ, D. Geller, P.  G\l owacki  \& L. 
              Polin},
 {Pseudodifferential operators on groups with dilations},
  {\it Duke Math. J.}, 68(1) (1992),
{31--65}.


\bibitem{CdVHT} Y. Colin de Verdi\`ere, L. Hillairet, \& E. Tr\'elat. Spectral asymptotics for sub-Riemannian Laplacians. I:
quantum ergodicity and quantum limits
in the 3D contact case,
{\it Duke Math. J.} {167} (2018),1, p.~109-174

\bibitem{CG}  L. J. Corwin \& F. P.  Greenleaf. Representations of nilpotent Lie groups and their applications, Part 1: Basic theory and examples, Cambridge studies in advanced Mathematics, Vol. 18, Cambridge Univ. Press, 1990.

  \bibitem{FP} 
  C. Fefferman \& D. Phong, 
   {The uncertainty principle and sharp {G}\.{a}rding inequalities},
  \emph {Comm. Pure Appl. Math.},
 {34},
{1981},
No {3},
pp {285--331}.


 \bibitem{FF1}  C. Fermanian Kammerer \& V. Fischer. Defect measures on graded Lie groups, {\it Ann. Sc. Norm. Super. Pisa},  Cl. Sci., 21(5), p.~207--291 (2020).
 
 \bibitem{FF2}   C. Fermanian Kammerer \& V. Fischer. Semi-classical analysis on H-type groups. {\it Science China, Mathematica}, 62 (6): 1057 -- 1086, 2019.
 
 \bibitem{FF3}   C. Fermanian Kammerer \& V. Fischer. Quantum evolution and sub-laplacian operators on groups of Heisenberg type, {\it  J. Spectr. Theory}, 11(3), p.~1313--1367 (2021).
 
\bibitem{FFF}   C. Fermanian Kammerer,  V. Fischer \&  S. Flynn. Geometric invariance of the semi-classical calculus on nilpotent graded Lie groups, arXiv:2112.11509
 
\bibitem{FL}  C. Fermanian Kammerer \& C. Letrouit. Observability and controllability for the Schroedinger equation on quotients of groups of Heisenberg type,  {\it Journal de l'\'Ecole Polytechnique, Math.}, 8,  p. 1459--1513 (2021).

\bibitem{F2022}
 {V. Fischer},
 {Asymptotics and zeta functions on compact nilmanifolds},
 \emph{J. Math. Pures Appl. (9)},
 \textbf{160},
  {2022},
  pp {1--28}.


  
  \bibitem{R+F_monograph}
 {V. Fischer \& M. Ruzhansky},
 {Quantization on nilpotent Lie groups},
 {\it Progress in Mathematics}, 314,
 Birkh\"auser Basel, 2016.
  

  
  

  
  \bibitem{FS}
 {G. B. Folland \& E. Stein},
 {Hardy spaces on homogeneous groups},
    {\it Mathematical Notes},
     {28},
 {Princeton University Press},
  {1982}.

  
  \bibitem{gerard_X} P. G\'erard.  Mesures semi-classiques et ondes de {B}loch.
 In {\it  S\'eminaire sur les \'{E}quations aux {D}\'eriv\'ees
  {P}artielles, 1990--1991}, pages Exp.\ No.\ XVI, 19. \'Ecole Polytech.,
  Palaiseau, 1991.


\bibitem{gerardleichtnam} P. G\'erard \&  E. Leichtnam.   Ergodic properties of eigenfunctions for the {D}irichlet problem.
 {\it Duke Math. J.}, 71, 2 (1993), p.~:559--607.
 
 \bibitem{GMMP}
P.  G{\'e}rard, P.
A. Markowich, N.~J. Mauser \& F.  Poupaud.
 Homogenization limits and {W}igner transforms.
{\it Comm. Pure Appl. Math.}, 50, 4 (1997), p.~323--379.

	 
\bibitem{HMR} B. Helffer, A. Martinez \& D. Robert. Ergodicit\'e et limite semi-classique. {\it Comm. Math. Phys.}, 109, 2 (1987), p.~313--326. 

\bibitem{Ho}  L. H\"ormander. Hypoelliptic second order differential equations. {\it Acta Math.}, 119 (1967), p.~147--171.

\bibitem{hul} A. Hulanicki. A functional calculus for Rockland operators on nilpotent Lie groups. {\it Studia Math.} 78 (1984), no. 3, p.~253--266. 
  

\bibitem{MuRi}  D. M\"uller \& F.   Ricci. Solvability for a class of doubly characteristic differential operators on two-step nilpotent groups, {\it Annals of Math.}, 142 (1995), p.~1--49.
  
  
 \bibitem{RS} L. P. Rothschild \& E. M. Stein. Hypoelliptic differential operators and nilpotent groups.  {\it Acta Math.}, 137(3-4):247-320, 1976. 
 
 \bibitem{Savale} N. Savale. Spectrum and abnormals in sub-Riemannian geometry: the 4D quasi-contact case
(arXiv:1909.00409). 

\bibitem{Taylor84} M. E. Taylor. Noncommutative microlocal analysis. I. Mem. Amer. Math. Soc., 52(313):iv+182, (Revised version accessible at http://math.unc.edu/Faculty/met/ncmlms.pdf) 1984.

\bibitem{VanErp} E. Van Erp. The Atiyah-Singer index formula for subelliptic operators on contact manifolds. Part I, {\it Annals of Mathematics}, 171: 1647-1681 (2010).

\bibitem{VeY1} E. Van Erp \& R. Yuncken. On the tangent groupoid of a filtered manifold. {\it Bull. of the L.M.S.} 49 (6): 1000-1012 (2017).

\bibitem{VeY2} E. Van Erp \& R. Yuncken. A groupoid approach to pseudodifferential operators. {\it Crelle's Journal}, 759:151-182 (2019). 


  \end{thebibliography}
\end{document}